\documentclass[11pt]{article}
% usepackage{eqnarray,amsmath}

\usepackage[linktocpage,
  pdfpagelabels=true,pdfstartview=Fit,bookmarksopen=true 
  ,pdfpagemode=UseOutlines,bookmarksnumbered=true,
  %pdfauthor={Myron Benioudakis},
]{hyperref} % make clickable links in LaTeX
\usepackage{bookmark}

\usepackage[a4paper,width=150mm,top=25mm,bottom=25mm,bindingoffset=6mm]{geometry}

\usepackage{multirow} 
\usepackage[table,xcdraw]{xcolor} 
\usepackage{colortbl,hhline} 
\setlength{\arrayrulewidth}{0.5mm} 

\setlength{\tabcolsep}{11pt} 

\usepackage{diagbox} 

\usepackage{amsmath}
\usepackage[utf8]{} 
\usepackage[T1]{fontenc}
\usepackage[greek,english]{babel}

\usepackage{authblk} 
\usepackage{graphicx} 

\title{Single vs Dynamic Lead-Time Quotations in Make-To-Order Systems with Delay-Averse Customers}

\author[1]{Myron Benioudakis}
\affil[1]{{\footnotesize  Department of Management Science and Technology\\
  Athens University of Economics and Business\\
  Athens, Greece\\
  \url{benioudakis@aueb.gr}, ORCID:0000-0003-3295-3343}}
  
\author[2]{Apostolos Burnetas}
\affil[2]{{\footnotesize  Department of Mathematics\\ National and Kapodistrian University of Athens\\
Athens, Greece\\
\url{aburnetas@math.uoa.gr}, ORCID:0000-0002-9365-9255}}  

\author[3]{George Ioannou} 
\affil[3]{{\footnotesize  Department of Management Science and Technology\\
  Athens University of Economics and Business\\
  Athens, Greece\\
  \url{ioannou@aueb.gr}}}

\date{September 4, 2020}
\usepackage{amssymb} 
\usepackage{amsthm}

\usepackage{fancyhdr}
\usepackage{breqn} 

\usepackage{natbib}
\bibliographystyle{apalike}

\pagestyle{plain}

\hyphenation{Sa-va-sa-ne-ril}
\linespread{1.5}

\providecommand{\keywords}[1]{\textbf{\textit{Keywords:}} #1}

\newtheorem{lemma}{Lemma}
\newtheorem{proposition}{Proposition}
\newtheorem{theorem}{Theorem}

\usepackage{mathtools}

\DeclarePairedDelimiter\floor{\lfloor}{\rfloor}

\begin{document}
\maketitle
\begin{abstract}
We develop a model for lead-time quotation in a Markovian Make-To-Order production or service system with strategic customers who exhibit risk aversion. Based on a CARA utility function of their net benefit, customers make individual decisions to join the system or balk by observing the state of the queue. The decisions of arriving customers result in a symmetric join/balk game. Regarding the firm's strategy, the provider announces a lead-time quotation for each state and a respective balking threshold. There is also a fixed entrance fee and compensation rate for the part of a customer' delay exceeding the quoted lead-time. Moreover, we consider the problem from the point of view of a social optimizer who maximizes the total net benefit of the system.

We analyze the provider’s and social optimizer’s maximization problems and we consider two cases regarding the class of lead-time quotation policies, i.e., dynamic and single. We identify the optimal entrance thresholds in each case. Finally, through computational experiments we quantify the effect of risk aversion on the profits and the degree of flexibility that the compensation policy offers. It is shown that the detrimental effects of risk aversion can be addressed more efficiently for the provider's problem compared to the social optimizer's one. Furthermore, the profit loss when setting a single lead-time quote is generally small compared to the optimal dynamic quotation policy.

\keywords {Make-to-order Systems; Strategic Customers; Lead-Time Quotation; Risk Aversion; Endogenous Demand.}
\end{abstract}

\section{Introduction}
%General intro things (done)
Nowadays, more and more firms adopt Make-To-Order (MTO) strategies to produce products or provide a service, motivated by the competitive advantages offered by the reduced inventory costs. On the other hand, the lack of stock leads to congestion which in turn creates queues. The consequences of congestion are exemplified as the customers' aversion to delay increases, for example when customers require immediate or urgent service (necessary equipment, healthcare etc.). As a result the 
competitive advantage earned by the reduced inventory may be dampened or lost, when customers are very sensitive to delay. The firm may then have to design specific policies to effectively address the customers' concerns about delays so that the demand is not adversely affected. Such policies include dynamic adjustments of the production or service capacity based on the system congestion, flexible pricing strategies, providing customers with information about system congestion, etc.

In this paper we focus on a class of policies that combine lead-time quotations to incoming customers with some degree of compensation for excess delays beyond the quoted lead-time. We show that the advantage of such policies with respect to dynamic pricing is that they allow the producer or the service provider to avoid uniform price reductions and instead compensate only those customers who suffer long delays. We consider these policies in an observable framework where customers are informed about system congestion and study how the quotation/compensation mechanism interacts with the sharing of congestion information.

In terms of customers and demand modeling, in addition to the strategic aspect of customer behavior, we adopt a general framework where customers may exhibit nonlinear sensitivity to increasing delays. We model this essentially risk-averse behavior by a suitably parametrized utility function that allows us to explore the effect of the customers' delay preferences on the provider's quotation policy.

We adopt three viewpoints in analysis the interactions among these factors: the individual customer who responds to the congestion information and the offered quotation by deciding whether to join the system or balk, the service provider who designs a quotation policy with the objective of maximizing his/her net profit, and a central planner, referred to as the social optimizer who considers the total benefit of all incoming customers as well as the provider.

Several questions arise in this context. First and foremost, regarding the structure of the optimal quotation policy, and in particular the comparison between a simple static quotation where the provider offers the same lead-time to all incoming customers and a more complex but potentially more profitable dynamic policy where the quote is also dependent, in addition to the customer's reaction. The question is to what extent the added complexity of the dynamic policies offers an analogous improvement in the provider's profit or the social welfare, as well as, how these comparisons depend on the degree of customer delay sensitivity. 

To address these questions, we develop a mathematical model for lead-time quotation. In particular, we consider a Make-To-Order or a service system where customers arrive according to a Poisson process, and the production/service times are i.i.d. exponential random variables. This gives rise to an M/M/1 queuing model. Customers place a value on the service they receive and incur a cost per unit of time of delay. They also pay an entrance fee and receive a delay compensation for the part of their delay which exceeds the quoted lead-time. The risk aversion is modeled by a utility function of the net customer benefit, that belongs to the class of Constant Absolute Risk Aversion (CARA). Based on this utility customers decide individually whether to join the system or not. Customers observe the actual state of the queue before they make their decision to join the system or balk. 

The fact that customers have full information on system congestion upon arrival, together with the FCFS service discipline reduces the need to take into account the behavior of other customers when making a join/balk decision, thus simplifying the analysis of equilibrium strategies. On the other hand, the service provider's pricing/compensation policies become more complex, since they incorporate the dynamic aspect of congestion-based quotations. 

The provider and the social optimizer employ state-dependent or single quotation policies. In the case of dynamic policies, there is more flexibility for the service provider which leads to higher profits. However, they are more complicated to apply, may require more expensive equipment, and in some cases can cause customer complaints for unequal treatment. On the other hand, by setting a single lead-time, the provider has a simpler process which however is generally less profitable.

The main contribution of this paper is in identifying the provider's/social optimizer's optimal quotation policies, analyzing the effect of customer risk aversion on the provider's/social optimizer's optimal profits and in comparing the single and the dynamic lead-time cases. Specifically, we show that the customer behavior is characterized by an appropriate balking threshold in all problems. We formulate four two-stage maximization problems, for the provider and social optimizer, under dynamic or single quotes, to find the optimal lead-time quotation strategy in each case and the corresponding balking threshold which is bounded in a finite interval. To solve each problem, we first solve a sub-problem where the threshold is fixed and the optimal lead-time quotation strategy is derived. Given this solution, we solve the initial problem by finding the optimal thresholds. For the dynamic case we present algorithms that simplify the computations, while for the single case we find the optimal threshold by a search method in the interval which is determined in an appropriate interval. 

Finally, through computational experiments, we explore the effect of risk aversion on balking thresholds and profits and the degree of flexibility that the presence of compensation offers. We obtain several interesting insights. We see that the provider's maximization may lead to longer queues than is socially optimal, which is in opposite direction to standard results in the case of no compensation (e.g., \cite{naor1969}). Moreover, the profits are mainly affected by the variation of the entrance fee and the increase of compensation rate offers more flexibility to deal with higher degrees of aversion and simultaneously increase the optimal balking thresholds and profits, while at the same time adopting a higher entrance fee. It is also shown that the service provider can address the effect of risk aversion more efficiently than the social optimizer and the optimal dynamic quotes lead to a rather small profit advantage compared to optimal single one. We finally examine the sensitivity of the minimum required capacity in the level of risk aversion.

The paper is organized as follows. Section \ref{literature} provides a literature review. In Section \ref{modeldescription} we define the model and the four optimization problems under risk aversion, i.e, provider and social maximization problems when the lead-time quotation strategy is dynamic or single. In Section \ref{Profit_maximization} we solve the problems under the scope of the provider while in Section \ref{Social_welfare} from the point of view of the social optimizer. In Section \ref{effectsofriskaversion} we derive some analytical properties, regarding the effect of risk aversion compared to the risk-neutral case. In Section \ref{Numericalsection} we present some computational results and quantify the effect of risk aversion. Finally, conclusions and possible extensions are presented in Section \ref{Conclusions}.

\section{Literature Review}\label{literature}
Many aspects of the strategic customer behavior in a queuing system have been studied in recent years.  For comprehensive reviews of the literature we refer to \cite{hassin2003queue} and \cite{Hassin2016}. In this paper, we consider an observable problem and focus on the interaction between customer risk preferences and lead-time quotation/threshold policies, as well as their effects on the service provider and social optimizer profit. 

Models where customers exhibit nonlinear aversion to delay have been developed in observable and unobservable settings. 
\cite{Sun2012} consider the joining decisions of customers with non-linear waiting cost functions. They derive equilibrium thresholds and provide numerical examples where the equilibrium, social, and profit-maximizing thresholds are in decreasing order as in \cite{naor1969}. \cite{Hassin2017RAOBS} examine a general observable G/G/s model using a non-linear utility function for customers. They examine the relationship between the optimal social threshold and monopoly threshold, and they prove their relationship does not generally agree with Naor, but depends on the structure of customer value of joining. They also examine an abandonment model where each customer may leave the system within some amount of time.
\cite{Feng2017} consider a Markov Decision Process model with multiple distinguishable customer types who receive a compensation for the delay above the quoted lead-time. The risk preferences are modeled by a boundary valuation curve between entrance fee and lead-time which is differentiated by customer type, and optimal dynamic pricing and lead-time quotations policies are derived.

\cite{Afeche2013} analyze an unobservable system with multiple customer types, and consider general pricing policies that depend on the actual sojourn time. When the customer types are known, it is shown that the policy that provides full delay compensation and charges an entrance fee equal to the customer's service value maximizes profits. When the types are not distinguishable, incentive compatible policies are constructed based on linear pricing schemes.

In another unobservable setting, \cite{Benioudakis19} explore pricing compensation policies that guarantee a particular demand pattern over time which aim to increase the reputation of the product/company (load control problem). 
In contrast to this setting, the present work considers the problem of equilibrium customer behavior and setting optimal lead-time quotes in an observable framework, thus both customer and service provider strategies are in principle state-dependent. One central question considered in this framework is to what extent the dynamic aspect of the quotation strategies affects the service provider's profit and the total customer benefit. 

Both the unobservable and the observable settings are considered in \cite{Wang2018}. The risk preferences (risk-averse or risk-seeking) are modeled by a quadratic service utility function that involves the mean and the variance of the waiting time. It is shown that when customers are highly risk-averse, then providing the queue length information adversely affects the service provider profits, but it is beneficial for the social benefit.

Regarding dynamic pricing policies, \cite{Duenyas1995a} and \cite{Duenyas1995b} study dynamic lead-time quotation policies in an M/M/1 queue in single and multiclass settings, and show that the optimal quotation is increasing in the queue length. 
\cite{Ata2006} consider dynamic control of the arrival and service rate in an M/M/1 queue to maximize long-run average profit and they solve an associated dynamic pricing problem.
\cite{Celik2008} model a Make-To-Order firm which offers a menu of dynamically adjusted price/lead-time pairs. They show that pricing decisions depend on the aggregate system workload, instead of the separate queue lengths and that the order closest to violating the lead-time must be given priority. 
\cite{Savasaneril2010} develop a dynamic lead-time quotation model of a base-stock M/M/1 inventory system. There is a lateness cost for the provider when customers receive their order after the  quoted lead-time. They formulate the problem as a Markov Decision Process where customers enter the system with a decreasing probability function of the lead-time. Given a base-stock level they determine the optimal lead-time quotation policy and afterward the optimal base-stock level.
\cite{Feng2011} model a G/M/1 queue with heterogeneous service valuations for dynamic pricing and lead-time quotation using Semi-Markov Decision Process. They derive a threshold type structure of the optimal policy.
\cite{Zhao2012} consider a Make-To-Order system where customers belong to a price-sensitive or a lead-time-sensitive class. They compare uniform quotation strategies of a single price and lead-time quotation as well as menu-based strategies with several price and lead-time quote combinations. 
\cite{Hafzoglu2016} consider a model with two priority classes, with static and dynamic price/quotation pairs for the higher and lower priority class respectively. They explore the benefit of the dynamic policy and analyze the problem of optimal mix of two customer types. 
\cite{Oner-Kozen2018} combine a dynamic price/lead-time quotation problem with due-date-based sequencing to two types of customers. A key insight is that a joint decision approach allows the provider to set price/lead-time quotes which increase the acceptance rate and the service level.

For the more general question of static vs dynamic pricing, \cite{Paschalidis2000} consider a queue with multiple customer classes differentiated according to price sensitivity. They show that the benefit of dynamic pricing diminishes as the service capacity increases.
\cite{Haviv2014} model a M/M/1 queue with price and delay-sensitive customers. It is shown that the performance of the demand-independent pricing is significant for both the profit and the social optimizer, compared with the optimal static, demand-dependent price.
\cite{Wang2019} model a two-station tandem queue where the service provider offers dynamic or static prices to price-sensitive customers, using an MDP model with discounted and average criteria award.

The problem of dynamic vs static pricing has also been studied extensively, in settings where there is no customer delay considerations. In an inventory setting, \cite{Gallego1994} deal with this question using intensity control theory for exponential and general demand functions. They show that by using a static price, the firm can earn profits close to those under the optimal dynamic policy. In a similar manner, \cite{Chen2018} analyze four models, three for dynamic pricing and one with a static price, and investigate how the number of price adjustments affects the profitability of deteriorating products. They also explore the impact of menu costs on dynamic pricing decisions. \cite{Liu2019} analyze a two-period model between a firm and a pool of strategic customers. They find that a pre-announced price is more preferable than dynamic pricing as customers become more strategic.

\section{Model Description}\label{modeldescription}
We model a Make-To-Order (MTO) system or a service where customers are identical and place orders or service requests one at a time. Customers arrive according to a Poisson process with rate $\lambda$, and the service times are exponentially distributed with rate $\mu$. The system operates under the First-Come-First-Served (FCFS) discipline. The system under consideration is an M/M/1 MTO queue. When an order is completed, it brings a revenue of $R$ to the customer. There is an entrance fee $p$ as well as a waiting cost $c$ per unit of sojourn time in the system.
The service provider quotes a lead-time $d$ for the service completion to an incoming customer. If the actual time $X$ that the customer stays in the system is longer than the quoted lead-time then there is a compensation $l$ per unit time of lateness $(X-d)^{+}$. Moreover, the state of the system is observable to all parties. Based on that information ($R,c,p,\lambda, \mu, l, d$), as well as the system length $n$, arriving customers decide whether to join or balk. If they balk they choose an alternative option with revenue $v$. Without loss of generality we assume that $v=0$. The quoted lead-time as well as each customer's decision generally depend on $n$. 

We also assume that $p\leq R, l \leq c$. These assumptions are consistent with pricing in a risk-averse customer framework. If the provider considered entrance fees $p>R$, then he or she should have to also use $l>c$ in order to entice the customers to join. Under such a policy customers would pay a high price to join the system, betting on long delays and subsequent high compensations. This framework is beyond the scope of our analysis.

The net benefit from joining is equal to $R-p-cX+l(X-d)^{+}$. We also assume that customers are risk-averse. Risk aversion is modeled by a concave utility function of the net benefit. 
The utility function of a risk-averse customer who faces a delay $X$ is defined as: $U(X)=\frac{1-e^{-r(R-p-cX+l(X-d)^{+})}}{r}$ with $r>0$, which is a utility function in the class of Constant Absolute Risk Aversion (CARA), with absolute risk aversion coefficient $r$. Note that $U$ is decreasing in $r$ for all
$X$, and as $r$ diminishes to zero the utility function converges to the risk-neutral form $U(X)=X$.

The delay of a customer who joins when $n$ or more customers are present is a random variable $X_n\sim Gamma(n+1,\mu)$, with probability density function $f_{n,\mu}(x)=\frac{\mu^{n+1}  x^{n} e^{-\mu x}} {n!}$.  Customers maximize their expected utility: 
\begin{equation*}
B_n(d)=E(U(X_n))=\displaystyle \int_0^d \frac{1-e^{-r(R-p-cx)}}{r} f_{n,\mu}(x)\,dx +\int_d^\infty \frac{1-e^{-r(R-p-ld-(c-l)x)}}{r} f_{n,\mu}(x)\,dx.
\end{equation*}
After computations it follows that, when $\mu > r(c-l)$:
\begin{equation}\label{SumBn}
B_n(d)=\frac{1-e^{-r(R-p)}\left(\left(\frac{\mu}{\mu-rc}\right)^{n+1}\left(1-K_1(n)\right)+\left(\frac{\mu}{\mu-r(c-l)}\right)^{n+1} K_2(n)\right)}{r} 
\end{equation}
where $\displaystyle K_1(n)=\sum_{k=0}^{n} \frac{e^{-(\mu-rc)d} \left((\mu-rc)d\right)^{k}}{k!} \text{ and } K_2(n)=\sum_{k=0}^{n} \frac{e^{-(\mu-rc)d} \left((\mu-r(c-l))d\right)^{k}}{k!}.$ 

On the other hand, if $\mu \leq r(c-l)$ the expected utility $B_n(d)=- \infty$ for all $n \geq 0,d\geq 0$.
 i.e., when the service capacity is not sufficiently large to compensate for the waiting cost and the degree of aversion, no customer will ever enter the system, regardless of the value of service $R$. 
 
This property is in pronounced contrast with the risk-neutral customer model. Indeed, when $r \to 0$ then the expected delay is finite for all $n \geq 0$ and $\mu>0$, even with very low capacity. The customer's optimal response is to join the system if $B_n(d)\geq0$ and balk otherwise. The utility of balking is $0$.

The service provider employs a lead-time quotation strategy $D=(d_0,d_1, d_2,\ldots)$ to maximize his/her expected profit per unit time. We may assume that the provider is risk-neutral as long as a large number of risk-averse customers enter the system and each of their service is only a small part of the total profit (see \cite{gamesofstrategy}, p. 277). Let
\begin{equation*}
G_n(d) = \begin{cases}
E(p-l(X_n-d)^{+})=p-lL_n(d),& \text{if }  B_n(d) \geq 0\\
0,&   \text{if }  B_n(d) < 0\\
\end{cases}
\end{equation*}
be the service provider's expected net profit from a customer who joins when $n$ customers are in the system, where $\displaystyle L_n(d)=E(X-d)^{+}= \int\limits_{d}^\infty (x-d)\frac{\mu^{n+1}}{n!}x^{n}e^{-\mu x}\mathrm{d}x$ is the expected delay from the lead-time quotation. The provider's objective is to maximize the expected profit per unit time in infinite horizon. 

In the next Lemma we show that the expected utility function of a customer is decreasing in $d$ and the expected provider's profit from an entering customer is increasing in $d$.
\begin{lemma}\label{monotonicity}
\begin{itemize}
\item[(i.)] $B_n(d)\text{ decreasing in } n \text{ and decreasing in } d$
\item[(ii.)] $L_n(d)\text{ increasing in } n \text{ and decreasing in } d$
\item[(iii.)] $G_n(d)\text{ decreasing in } n \text{ and increasing in } d$
\end{itemize}
\end{lemma}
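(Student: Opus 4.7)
The plan is to reduce all three parts to two elementary tools: (a) the stochastic domination $X_n\leq_{\mathrm{st}}X_{n+1}$, which is transparent from the additive representation $X_{n+1}=X_n+E$ with $E\sim\mathrm{Exp}(\mu)$ independent of $X_n$, and (b) pointwise monotonicity of the integrands in the variable of interest, which is preserved under expectation.

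For claim (i), I would first rewrite the customer's net benefit $R-p-cx+l(x-d)^{+}$ piecewise as $R-p-cx$ on $[0,d]$ and $R-p-ld-(c-l)x$ on $(d,\infty)$; the standing assumption $l\leq c$ makes both pieces nonincreasing in $x$. Composing with the strictly increasing CARA transformation $u\mapsto(1-e^{-ru})/r$ then shows that $U(X_n)$ is a nonincreasing function of $X_n$, so stochastic domination yields $B_{n+1}\leq B_n$. For the dependence on $d$, the pointwise derivative of the integrand with respect to $d$ is $-l\mathbf{1}\{x>d\}\leq 0$, and this passes through the expectation to give $\partial B_n/\partial d\leq 0$. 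Claim (ii) is even more direct: $(x-d)^{+}$ is visibly nondecreasing in $x$ and nonincreasing in $d$, so the same two ingredients (stochastic domination in $n$, pointwise monotonicity in $d$) give $L_n$ increasing in $n$ and decreasing in $d$.

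For claim (iii), within the joining region $\{B_n(d)\geq 0\}$ the formula $G_n(d)=p-lL_n(d)$ makes both monotonicities an immediate corollary of (ii); outside the region, $G_n$ is identically zero. By (i), $B_n$ is monotone in both arguments, so the joining region has the structure of a monotone staircase in the $(n,d)$--plane, and each monotonicity assertion collapses to checking a single one-sided jump at this boundary.

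I expect this boundary comparison to be the main obstacle of the proof. The required inequality at a threshold state $n^{\ast}$ (or its dual in $d$) is essentially a sign control on $p-lL_{n^{\ast}}(d)$, which is not a tautology from the definitions and must be extracted from the defining condition $B_{n^{\ast}}(d)\geq 0$ together with the explicit expression \eqref{SumBn} and the standing assumptions $p\leq R$, $l\leq c$; Jensen's inequality applied to the strictly concave CARA transformation gives $R-p-cE[X_{n^{\ast}}]+lL_{n^{\ast}}(d)\geq 0$, and this is the natural starting point to pin down the correct sign of the jump.
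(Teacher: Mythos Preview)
Your treatment of (i) and (ii) is essentially the paper's own proof: stochastic dominance $X_n\leq_{\mathrm{st}}X_{n+1}$ together with pointwise monotonicity of the integrand, with the $d$-dependence declared ``immediate.''

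For (iii), however, you are working harder than the paper and in a direction that cannot succeed. The paper simply writes $G_n(d)=p-lL_n(d)$ and reduces to (ii); it is implicitly asserting the monotonicity only on the joining region $\{B_n(d)\geq 0\}$, which is how (iii) is actually used downstream (Lemma~\ref{dn_monotonicity}, Proposition~\ref{opt quot}, Proposition~\ref{optimalthresholdprovider}). Your boundary analysis is a legitimate reading of the piecewise definition, but the Jensen step you propose does not close it and in fact cannot: from $B_{n^\ast}(d)\geq 0$ and concavity you obtain $p-lL_{n^\ast}(d)\leq R-cE[X_{n^\ast}]$, an \emph{upper} bound, whereas decreasing-in-$n$ across the boundary requires the \emph{lower} bound $p-lL_{n^\ast}(d)\geq 0$. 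Worse, the two monotonicities demand opposite signs at the boundary (increasing-in-$d$ would need $p-lL_n(\tilde d^P_n)\leq 0$), so no single sign control can rescue both. Concretely, take $d=0$ and $l$ close to $c$ so that $\overline{n}$ is large; then $G_{\overline{n}-1}(0)=p-l\,\overline{n}/\mu<0=G_{\overline{n}}(0)$, violating monotonicity in $n$ under the piecewise reading. So drop the boundary discussion and state (iii) for the formula $p-lL_n(d)$ on the joining region; it is then a one-line corollary of (ii), exactly as in the paper.
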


We next consider the queueing process under a general quotation policy. We show that under any $D$, there exists a finite queue length above which an incoming customer balks. This implies that in steady state the queue length behaves as an $M/M/1$ queue with finite capacity. Specifically, given a $D$, let $n_0(D)=\min( n\geq 0:B_n(d_n)<0)$, which is the first system state where a customer is induced by the lead-time to balk. In the next proposition we develop lower and upper bounds for $n_0(D)$.

\begin{proposition}\label{range n0}
If $l<c$, for any quotation policy $D$ the corresponding balking threshold $n_0(D)$ satisfies $$\underline{n}\leq n_0(D) \leq \overline{n},$$
where $$\displaystyle \underline{n}=\inf( n:\lim_{d \to \infty} B_n(d) <0 )=\floor* {\frac{r(R-p)}{\ln\left(\frac{\mu}{\mu-rc}\right)}},$$
$$\overline{n}=\inf( n:B_n(0) <0 )=\floor*{\frac{r(R-p)}{\ln\left(\frac{\mu}{\mu-r(c-l)}\right)}}.$$
\end{proposition}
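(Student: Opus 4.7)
The plan is to use Lemma~\ref{monotonicity}(i), which states that $B_n(d)$ is decreasing in both arguments, to sandwich $n_0(D)$ between the two extreme cases of the quotation. For any state $n$ and any quote $d_n$, monotonicity in $d$ gives
$$\lim_{d\to\infty}B_n(d)\leq B_n(d_n)\leq B_n(0),$$
so a customer at state $n$ will always join (irrespective of $d_n$) when $\lim_{d\to\infty}B_n(d)\geq 0$, and will always balk when $B_n(0)<0$. The two thresholds $\underline{n}$ and $\overline{n}$ in the statement are precisely the indices where these two universal predicates flip sign.

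For the lower bound $n_0(D)\geq\underline{n}$, I would fix any $n<\underline{n}$; by definition of $\underline{n}$, $\lim_{d\to\infty}B_n(d)\geq 0$, and the display above forces $B_n(d_n)\geq 0$. Hence no index below $\underline{n}$ lies in $\{n:B_n(d_n)<0\}$, so $n_0(D)\geq\underline{n}$. For the upper bound $n_0(D)\leq\overline{n}$, the definition of $\overline{n}$ gives $B_{\overline{n}}(0)<0$, and the same display forces $B_{\overline{n}}(d_{\overline{n}})<0$, so $\overline{n}$ itself belongs to $\{n:B_n(d_n)<0\}$ and $n_0(D)\leq\overline{n}$.

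It then remains to derive the two explicit floor expressions from~(\ref{SumBn}). Setting $d=0$ collapses both $K_1(n)$ and $K_2(n)$ to their $k=0$ term only, yielding $K_1(n)=K_2(n)=1$ and hence $B_n(0)=\bigl[1-e^{-r(R-p)}\bigl(\mu/(\mu-r(c-l))\bigr)^{n+1}\bigr]/r$. The inequality $B_n(0)<0$ is equivalent, after taking logarithms, to $n+1>r(R-p)/\ln(\mu/(\mu-r(c-l)))$; the smallest nonnegative integer satisfying this is exactly the stated floor expression for $\overline{n}$. For $\underline{n}$, assuming $\mu>rc$ (otherwise $\lim_{d\to\infty}B_n(d)=-\infty$ for every $n$ and the bound is vacuous), each summand of $K_1(n)$ and $K_2(n)$ carries the factor $e^{-(\mu-rc)d}$ which dominates the polynomial in $d$, so both sums tend to $0$ and $\lim_{d\to\infty}B_n(d)=\bigl[1-e^{-r(R-p)}\bigl(\mu/(\mu-rc)\bigr)^{n+1}\bigr]/r$. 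The analogous logarithmic manipulation then produces the floor formula for $\underline{n}$.

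The sandwiching argument is essentially immediate once monotonicity is available; the main obstacle I anticipate is the limit analysis of $K_2(n)$, where the exponential factor $e^{-(\mu-rc)d}$ combines $\mu-rc$ with the polynomial weight $(\mu-r(c-l))d$ in a slightly unnatural way, so one must verify that the exponential decay dominates the polynomial growth for every $k\leq n$ and that the limit is taken uniformly in the finite sum. The remaining steps --- inverting the logarithmic inequalities and translating strict inequalities of the form $n+1>A$ into $\lfloor A\rfloor$ --- are routine.
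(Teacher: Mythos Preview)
Your proposal is correct and follows essentially the same argument as the paper: both use the sandwich $\lim_{d\to\infty}B_n(d)\leq B_n(d_n)\leq B_n(0)$ from Lemma~\ref{monotonicity}, compute the two endpoints from~\eqref{SumBn}, and read off the thresholds. Your write-up is in fact more explicit than the paper's on the derivation of the closed-form floor expressions, which the paper simply states; the limit concern you flag for $K_2(n)$ is not a real obstacle, since the common factor $e^{-(\mu-rc)d}$ with $\mu>rc$ kills each of the finitely many polynomial terms.
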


From Proposition \ref{range n0}, we see that when $l=0$, $\underline{n}$ and $\overline{n}$ coincide, therefore, there is a unique threshold $n_0(D)=\floor* {\frac{r(R-p)}{\log\left(\frac{\mu}{\mu-rc}\right)}}$ for any $D$. This is equal to the threshold in \cite{naor1969} for risk-neutral customers. Moreover, if $\underline{n}=0$ then for all $n \geq 0:\lim_{d \to \infty} B_n(d) < 0$ which means that even if the system is empty, the provider must offer a finite lead-time to induce a customer to join. On the other hand, if $\overline{n}=0$ then for all $n \geq 0:B_n(0) < 0$ which means that even if the system is empty and $d=0$, a customer never joins. Therefore, in analogy with \cite{naor1969}, a necessary and sufficient condition so that some customers join the system in equilibrium is that $\overline{n}>0$, or equivalently $R-p \geq \frac{1}{r} \ln\left( \frac{\mu}{\mu-r(c-l)}\right)$. When $l=c$, the upper bound $\overline{n}=\infty$, since $B_n(0)\geq 0$ for all $n$, and the above condition is always satisfied as long as $R-p \geq 0$.

Based on Proposition \ref{range n0}, under any strategy $D$ such that $n_0(D)=n_0$, the system behaves as a CTMC with a single recurrent class $\lbrace0,1,\ldots,n_0\rbrace$ and all states $n > n_0$ are transient. This system corresponds to a $M/M/1/n_0$ with steady-state distribution:
$$
q(n;n_0)=\frac{\rho^n}{\displaystyle\sum_{n=0}^{n_0} \rho^{n}},
n=0,\ldots,n_0,
$$
where $\rho=\frac{\lambda}{\mu}.$

We now turn to the provider's optimization problem. We consider two cases regarding the class of quotation policies. 
First dynamic quotation policies, where the lead-time is allowed to depend on the state, and second, policies with single lead-time, identical for all $n$.

In the first case, there is more flexibility for the service provider and this policy allows him to earn higher profits. However, this policy is more complicated to apply, and in some cases it could cause customer complaints for unequal treatment.

On the other hand, by setting a single lead-time for all states, the provider is able to use a simpler quotation policy. As will be shown below, customers who join in states under the balking threshold benefit from a single lead-time policy.

For the class of dynamic policies the provider's problem is:
$$P^{*}=\sup_D P(D) ,$$
where $\displaystyle P(D)=\lambda \sum_{n=0}^{n_0(D)-1} q(n;n_0(D)) G_n(d_n)$
is the expected net profit per unit time.

The profit maximization can be expressed as a two-stage optimization problem as follows: 
\begin{eqnarray}\label{provider_max}
P^{*}=\max_{\underline{n}\leq n_0 \leq \overline{n}} H(n_0), 
\end{eqnarray}
where $\displaystyle H(n_0)= \sup_{D}(P(D):n_0(D)=n_0)$ is the provider's optimal profit under all policies that induce a balking threshold $n_0$.

In the case of dynamic quotation policies, since we consider steady-state criteria, in order to define a quotation policy, it is sufficient to determine a balking threshold $n_0$, and a vector $D_{n_0}=(d_0, d_1, \ldots,d_{n_0})$ of lead-times quotes which ensure that a customer joins if and only if $n<n_0$. From now on a provider's strategy will be denoted by $(n_0, D_{n_0})$, where $n_0$ is an integer and $D_{n_0}$ a nonnegative vector of dimension $n_0+1$. This does not mean that the provider announces $n_0$, but simply that his/her announced sequence $D$ is such that $n_0$ is the balking threshold. To ensure this, the lead-times $d_0, d_1, \ldots,d_{n_0}$ must be set to satisfy the following constraints:
\begin{align}\label{constraints}
B_n(d_n)&\geq0 ,\quad n=0,1,\ldots,n_0-1\nonumber \\ 
B_{n_0}(d_{n_0})&<0 \\ 
d_n &\geq 0, \quad n=0, 1,\ldots, n_0\nonumber
\end{align}
while $d_{n_0+1},d_{n_0+2}, \ldots$ are irrelevant.

The service provider's optimal quotation strategy given $n_0$ is derived from:
\begin{equation}\label{firstmaxpro}
H(n_0)=\sup_{D_{n_0}} P(n_0,D_{n_0})
\end{equation}
such that the constraints in \eqref{constraints} are satisfied.

In the case of a single lead-time for all states, the problem is identical, with the additional requirement that:
\begin{align}\label{constraintconstant}
d_n=d,\quad n=0,1,\ldots, n_0.
\end{align}
The provider's strategy will be denoted by $(n_0, d)$, where $d$ is the single lead-time for any $n$. In this case, the announced lead-time $d$ must ensure that the balking threshold is $n_0$. Similarly to the previous case the provider's problem is:
$$P^{*}_{c}=\sup_{d} P_{c}(d),$$ 
where $\displaystyle P_c(d)=\lambda \sum_{n=0}^{n_0(d)-1} q(n;n_0(d)) G_n(d)$.

It can also be expressed as a two-stage optimization problem as follows:
\begin{eqnarray}\label{provider_maxc}
P^{*}_{c}=\max_{\underline{n}\leq n_0 \leq \overline{n}} H_{c}(n_0), 
\end{eqnarray}
where $\displaystyle H_{c}(n_0)= \sup_{d}(P_c(d):n_0(d)=n_0)$.

Therefore, the provider's optimal quotation strategy given $n_0$ is derived from:
\begin{equation}\label{firstmaxproc}
H_{c}(n_0)=\sup_{d} P_{c}(n_0,d)
\end{equation}
such that the constraints in \eqref{constraints} and \eqref{constraintconstant} are satisfied.

Regarding the social benefit maximization problem, the only change is in the objective function, since we consider maximization of the total customer and provider benefit per unit time in steady state. Following a similar approach, for the class of dynamic policies we define:
$$S^{*}=\sup_D S(D),$$
where $\displaystyle S(D)=\lambda \sum_{n=0}^{n_0(D)-1} q(n;n_0(D)) ( G_n(d_n)+ B_n(d_n)).$

The two-stage optimization problem is:
\begin{eqnarray}\label{social_max}
S^{*}=\max_{\underline{n}\leq n_0 \leq \overline{n}} Z(n_0), 
\end{eqnarray}
where $\displaystyle Z(n_0)= \sup_{D}( S(D):n_0(D)=n_0 )$.

Therefore, the problem for the case of dynamic lead-time quotations given $n_0$ can be expressed as:
\begin{equation}\label{firstsocpro}
Z(n_0)=\sup_{D_{n_0}} S(n_0,D_{n_0}),
\end{equation}
such that the constraints in \eqref{constraints} are satisfied.

Finally, for the class of single quotation policies the problem is:
$$S^{*}_{c}=\sup_{d} S_{c}(d)$$ 
where $\displaystyle S_c(d)=\lambda \sum_{n=0}^{n_0(d)-1} q(n;n_0(d)) (G_n(d)+B_n(d))$.

The two-stage optimization problem is:
\begin{eqnarray}\label{social_maxc}
S^{*}_{c}=\max_{\underline{n}\leq n_0 \leq \overline{n}} Z_{c}(n_0), 
\end{eqnarray}
where $\displaystyle Z_{c}(n_0)= \sup_{d} (S_{c}(d):n_0(d)=n_0)$.

In a similar manner, the social benefit maximization problem for a single lead-time quotation given $n_0$ can be expressed as:
\begin{equation}\label{firstsocproc}
Z_{c}(n_0)=\sup_{d} S_{c}(n_0,d)
\end{equation}
such that the constraints in \eqref{constraints} and \eqref{constraintconstant} are satisfied.

\section{Profit Maximization-Optimal Quotation Policy}\label{Profit_maximization}
In this section we solve the two-stage problems \eqref{provider_max} and \eqref{provider_maxc} under dynamic and single quotation strategies, respectively. We derive the optimal pair of lead-time quotations and balking threshold that maximizes the service provider's long-run average profit and we present some monotonicity properties.
Specifically in subsection \ref{dynamicprovider} we first determine the optimal dynamic quotation strategy for the problem \eqref{firstmaxpro} given a balking threshold $n_0$ and afterwards we present an algorithm which determines the optimal threshold.
In subsection \ref{singleprovider} we solve problem \eqref{firstmaxproc}. The optimal $n_0$ in this case is computed through an exhaustive search in the corresponding finite interval.

\subsection{Dynamic quotation}\label{dynamicprovider}
To determine the optimal lead-time quotation in problem \eqref{firstmaxpro}, i.e., given a balking threshold $n_0$, we define the sequence:
\begin{equation}\label{tildeoptimalprovider}
\tilde{d}^{P}_{n} = \sup ( d \geq 0: B_n(d) \geq 0 ), \: n\geq0.
\end{equation}

The quantity $\tilde{d}^{P}_{n}$ denotes the maximum possible lead-time quotation that will induce a customer to join in state $n.$ We use the convention $\sup\varnothing=-1$, denoting that if $B_n(d)<0$ for all $d\geq0$, then no customer will join regardless of the lead-time quotation.

Since $B_n(d)$ is continuous and decreasing in $d$, if the set $\lbrace d \geq 0:B_n(d) \geq 0 \rbrace$ is not empty, then it is either a bounded interval $\left[0,\tilde{d}^{P}_{n} \right]$, where $\tilde{d}^{P}_{n}$ is the solution of $B_n(d)=0$, or the half-line $\left[0,\infty \right)$. In both cases $\tilde{d}^{P}_{n}$ can be considered as a well-defined lead-time, with the convention that $\tilde{d}^{P}_{n}= \infty$ corresponds to no compensation. In the next lemma we summarize these. We also show that $\tilde{d}^{P}_{n}$ decreases in $n$ and the expected service provider's profit from a customer who joins when $n$ customers are in the system, decreases in $n$.
\begin{lemma}\label{dn_monotonicity}
\begin{itemize}
\item[(i.)] $\tilde{d}^{P}_{n}$ is decreasing in $n$
\item[(ii.)] $\tilde{d}^{P}_{n}=\infty$ for all $n<\underline{n}$
\item[(iii.)] For any $n \leq n_0-1, G_n(\tilde{d}^{P}_{n})$ is decreasing in $n$
\end{itemize}
\end{lemma}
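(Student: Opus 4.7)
All three claims should follow with little effort from the monotonicity already established in Lemma \ref{monotonicity} together with the definitions of $\tilde{d}^{P}_{n}$, $\underline{n}$, and $G_n$; no new computations with the explicit formula \eqref{SumBn} should be needed. My plan is to read off each part from a containment of level sets and a two-step comparison in $n$ and in $d$.

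\textbf{Part (i).} I will work directly from definition \eqref{tildeoptimalprovider}. Since Lemma \ref{monotonicity}(i) gives that $B_n(d)$ is decreasing in $n$ for every fixed $d$, the level set $\{d\ge 0: B_{n+1}(d)\ge 0\}$ is contained in $\{d\ge 0: B_n(d)\ge 0\}$. Taking suprema yields $\tilde{d}^{P}_{n+1}\le \tilde{d}^{P}_{n}$, with the conventions $\sup\varnothing=-1$ and $\sup[0,\infty)=\infty$ handled consistently.

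\textbf{Part (ii).} By Proposition \ref{range n0}, for $n<\underline{n}$ we have $\lim_{d\to\infty}B_n(d)\ge 0$. Because $B_n(d)$ is decreasing in $d$ (Lemma \ref{monotonicity}(i)), this limit is also the infimum, so $B_n(d)\ge 0$ for every $d\ge 0$. Hence the set in \eqref{tildeoptimalprovider} is all of $[0,\infty)$ and $\tilde{d}^{P}_{n}=\infty$.

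\textbf{Part (iii).} I will combine Lemma \ref{monotonicity}(iii) (monotonicity of $G_n$ in $n$ and $d$) with part (i). Split into cases according to where $n$ sits relative to $\underline{n}$. If $n+1<\underline{n}$, part (ii) gives $\tilde{d}^{P}_{n}=\tilde{d}^{P}_{n+1}=\infty$, and since $L_n(d)\to 0$ as $d\to\infty$, both values of $G_n(\tilde{d}^{P}_{n})$ equal $p$. If $n<\underline{n}\le n+1$, then $G_n(\tilde{d}^{P}_{n})=p$ while $G_{n+1}(\tilde{d}^{P}_{n+1})=p-lL_{n+1}(\tilde{d}^{P}_{n+1})\le p$. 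If $n\ge \underline{n}$, both quotes are finite and part (i) gives $\tilde{d}^{P}_{n+1}\le \tilde{d}^{P}_{n}$; then the two-step sandwich
\[
G_{n+1}(\tilde{d}^{P}_{n+1})\le G_n(\tilde{d}^{P}_{n+1})\le G_n(\tilde{d}^{P}_{n}),
\]
where the first inequality uses that $G_n$ is decreasing in $n$ and the second uses that $G_n$ is increasing in $d$, closes the argument. The only potential subtlety, and the piece I would be most careful to write out, is this bookkeeping across the cases $\tilde{d}^{P}_{n}=\infty$ versus finite, so that part (iii) is truly valid on the entire range $n\le n_0-1$ and not only when $n_0\le \underline{n}+1$ or when all the relevant quotes are finite.
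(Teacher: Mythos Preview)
Your proof is correct and follows essentially the same approach as the paper: all three parts are deduced from Lemma~\ref{monotonicity} together with the definitions, with part~(iii) obtained via a two-step comparison (the paper pivots through $L_{n+1}(\tilde{d}^{P}_{n})$ rather than your $G_n(\tilde{d}^{P}_{n+1})$, an equivalent choice). Your explicit bookkeeping for the case $\tilde{d}^{P}_{n}=\infty$ is a careful addition that the paper leaves implicit.
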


In the next Proposition we determine the optimal lead-time quotation $D^{P}_{n_0}$. 
\begin{proposition}\label{opt quot}
For any $n_0 \in \left\lbrace \underline{n}, \ldots ,\overline{n} \right\rbrace$, an optimal dynamic quotation strategy in \eqref{firstmaxpro} subject to the condition $n_0(D)=n_0$ is determined as follows:
\begin{equation*}
D^{P}_{n_0}=(\tilde{d}^{P}_{0},\ldots,\tilde{d}^{P}_{n_0-1},\tilde{d}^{P}_{n_0}+\epsilon),
\end{equation*}
for any $\epsilon>0$.
\end{proposition}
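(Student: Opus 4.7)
My approach is to exploit the additive separability of the objective. Since
$$P(n_0, D_{n_0}) = \lambda \sum_{n=0}^{n_0 - 1} q(n; n_0)\, G_n(d_n),$$
the steady-state weights $q(n; n_0)$ depend only on $n_0$, and each summand depends on a single coordinate $d_n$. Maximizing $P$ over all $D_{n_0}$ satisfying \eqref{constraints} therefore reduces to maximizing each $G_n(d_n)$ independently for $n = 0,\ldots,n_0-1$ under $B_n(d_n)\geq 0$, and then choosing $d_{n_0}$ separately to satisfy $B_{n_0}(d_{n_0})<0$.

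For each $n \leq n_0-1$, Lemma \ref{monotonicity}(iii) gives that $G_n(\cdot)$ is nondecreasing in $d$, so the optimizer should push $d_n$ to the largest feasible value. Continuity and monotonicity of $B_n$ (Lemma \ref{monotonicity}) imply that $\{d\geq 0 : B_n(d)\geq 0\}$ is either the compact interval $[0,\tilde{d}^{P}_{n}]$, in which case $B_n(\tilde{d}^{P}_{n})=0$ by continuity, or the full half-line $[0,\infty)$, in which case $\tilde{d}^{P}_{n}=\infty$; by Lemma \ref{dn_monotonicity}(ii) the latter case arises only for $n<\underline{n}$, and corresponds to quoting no compensation, yielding $G_n=p$. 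In both cases $d_n=\tilde{d}^{P}_{n}$ attains the supremum of $G_n$ over the feasibility set. For the balking state $n_0$, when $\tilde{d}^{P}_{n_0}$ is finite one has $B_{n_0}(\tilde{d}^{P}_{n_0})=0$, and continuity together with strict monotonicity of $B_{n_0}$ in a neighbourhood of this zero ensures $B_{n_0}(\tilde{d}^{P}_{n_0}+\epsilon)<0$ for any $\epsilon>0$; when the set is empty (i.e., $n_0=\overline{n}$ with $B_{n_0}(0)<0$), already $d_{n_0}=0$ works. Because state $n_0$ contributes nothing to the profit sum, $\epsilon$ is genuinely arbitrary.

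The remaining point to verify is consistency: the coordinate-wise optimizers must together induce exactly $n_0$ as the balking threshold. This is automatic from the construction, since $B_n(\tilde{d}^{P}_{n})\geq 0$ for all $n<n_0$ by the very definition \eqref{tildeoptimalprovider}, so each lower state does induce joining, while $B_{n_0}(\tilde{d}^{P}_{n_0}+\epsilon)<0$ makes $n_0$ the first balking state. The main obstacle in the write-up is thus only the careful bookkeeping of the boundary cases $\tilde{d}^{P}_{n}=\infty$ (for $n<\underline{n}$) and empty-feasibility at $n=\overline{n}$; once these are handled, the proposition is a pure separability argument powered by the definition of $\tilde{d}^{P}_{n}$ and the monotonicities of Lemmas \ref{monotonicity} and \ref{dn_monotonicity}.
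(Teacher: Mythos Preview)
Your proposal is correct and takes essentially the same approach as the paper: both exploit the separability of the objective in the coordinates $d_n$ together with the monotonicity of $G_n$ in $d$ (Lemma~\ref{monotonicity}) to conclude that each $d_n$ should be pushed to $\tilde{d}^{P}_{n}$. The paper phrases this as a perturbation argument (if some $d_k<\tilde{d}^{P}_{k}$, replacing it by $\tilde{d}^{P}_{k}$ strictly improves the profit while preserving feasibility), whereas you argue directly by coordinate-wise maximization; your version is in fact more explicit about the boundary cases $\tilde{d}^{P}_{n}=\infty$ and $n_0=\overline{n}$, which the paper's proof leaves implicit.
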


As discussed above, the optimal quotation $D^{P}_{n_0}$ is uniquely determined for states $n=0,\ldots ,n_0-1$, while for $n=n_0$, any lead-time $d>\tilde{d}^{P}_{n_0}$ is sufficient.
From Proposition \ref{opt quot} it is also follows that problem in \eqref{firstmaxpro} has an optimal solution in $D$ for any $n_0$ and since $n_0$ is restricted in the finite set $\left\lbrace \underline{n},\underline{n}+1, \ldots ,\overline{n} \right\rbrace$, the supremum in \eqref{firstmaxpro} can be attained.

From sequence \eqref{tildeoptimalprovider} and Proposition \ref{opt quot} it also follows that, although the service provider employs a state-dependent quotation strategy, he or she cannot generally reap the entire benefit from the customers. This is so because in states $n<\underline{n}$ although $\tilde{d}^{P}_{n}=\infty$, i.e. no compensation is offered, the customers may still obtain a strictly positive benefit from joining. To obtain the entire customer benefit the provider must also have pricing flexibility.

From Lemma \ref{dn_monotonicity} and Proposition \ref{opt quot} it follows that the optimal lead-time quote is decreasing in $n$. This is contrast with the results of \cite{Savasaneril2010}, where the optimal quote is increasing in $n$. The difference is due to the fact that in \cite{Savasaneril2010} an incoming customer does not observe the state of the system but instead joins with a prespecified probability $f(d)$, where $d$ is the lead-time. In our model where the customers are strategic and observe $n$, the provider is forced to quote lower lead-times when $n$ is large, so that he or she entices the customer to join.

From Proposition \ref{opt quot} we see that the supremum in \eqref{provider_max} is attained by $D^{P}_{n_0}$. Therefore, 
\begin{eqnarray*}
\displaystyle H(n_0)=P(n_0,D^{P}_{n_0})= \lambda \sum_{n=0}^{n_0-1} q(n;n_0) G_n(\tilde{d}^{P}_{n}).
\end{eqnarray*}
The next step is to determine the optimal threshold value $n_0$ for problem \eqref{provider_max}. The next Theorem provides a necessary and sufficient condition for $n_0$ to be optimal. This result is useful in designing a faster search algorithm for identifying the optimal $n_P$.

\begin{proposition}\label{optimalthresholdprovider}
The optimal $n_P$ for the optimization problem in \eqref{provider_max} is equal to
\begin{equation*}
n_P = \max(\underline{n},\min (\tilde{n}_P,\overline{n})),
\end{equation*}
where, $$\tilde{n}_P=\min ( n_0: A(n_0)<0 ),$$ and
$$A(n_0)=G_{n_0}(\tilde{d}^{P}_{n_0})\displaystyle\sum_{n=0}^{n_0} \rho^{n}-\rho \displaystyle\sum_{n=0}^{n_0-1} \rho^{n} G_n(\tilde{d}^{P}_{n}).$$
\end{proposition}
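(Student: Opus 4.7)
The plan is to reduce the outer optimization to a discrete unimodality argument for $H(n_0)$. First I would write, using Proposition \ref{opt quot},
$$H(n_0) \;=\; \lambda\,\frac{\sum_{n=0}^{n_0-1} \rho^{n} G_{n}(\tilde{d}^{P}_{n})}{\sum_{n=0}^{n_0} \rho^{n}},$$
and form the forward difference $H(n_0+1)-H(n_0)$ on a common denominator. After the cross terms cancel, a direct calculation gives
$$H(n_0+1)-H(n_0) \;=\; \frac{\lambda\, \rho^{n_0}\, A(n_0)}{\bigl(\sum_{n=0}^{n_0}\rho^{n}\bigr)\bigl(\sum_{n=0}^{n_0+1}\rho^{n}\bigr)},$$
so $\operatorname{sign}\bigl(H(n_0+1)-H(n_0)\bigr) = \operatorname{sign}\bigl(A(n_0)\bigr)$.

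The key structural step is to prove that $A(n_0)$ is non-increasing in $n_0$. A telescoping computation yields
$$A(n_0+1) - A(n_0) \;=\; \bigl[G_{n_0+1}(\tilde{d}^{P}_{n_0+1}) - G_{n_0}(\tilde{d}^{P}_{n_0})\bigr]\sum_{n=0}^{n_0+1}\rho^{n},$$
and the bracket is non-positive by Lemma \ref{dn_monotonicity}(iii). Since $G_{\overline{n}}(\tilde{d}^{P}_{\overline{n}})=0$ (no customer joins at $\overline{n}$), the value $A(\overline{n})$ is non-positive, so $\tilde{n}_P = \min\{n_0 : A(n_0)<0\}$ exists and is the unique sign-change point: $A(n_0)\geq 0$ for $n_0<\tilde{n}_P$ and $A(n_0)<0$ for $n_0\geq \tilde{n}_P$. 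Combined with the sign identity above, this shows that $H$ is non-decreasing on $\{0,\ldots,\tilde{n}_P\}$ and strictly decreasing on $\{\tilde{n}_P,\tilde{n}_P+1,\ldots\}$, hence discretely unimodal with peak at $\tilde{n}_P$.

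Finally, restricting to the feasible range $\{\underline{n},\ldots,\overline{n}\}$ from Proposition \ref{range n0}, I would split into three cases: if $\tilde{n}_P\in[\underline{n},\overline{n}]$ the maximum of $H$ is attained at $\tilde{n}_P$; if $\tilde{n}_P<\underline{n}$, then $H$ is non-increasing on the feasible range and the maximum is at $\underline{n}$; and if $\tilde{n}_P>\overline{n}$, then $H$ is non-decreasing on the range and the maximum is at $\overline{n}$. These three cases are precisely encoded by $n_P = \max(\underline{n},\min(\tilde{n}_P,\overline{n}))$. The main obstacle is establishing the monotonicity of $A(n_0)$, but once Lemma \ref{dn_monotonicity}(iii) is invoked, the telescoping identity makes it immediate; everything else is bookkeeping.
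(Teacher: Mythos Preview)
Your proposal is correct and follows essentially the same route as the paper: express $H(n_0+1)-H(n_0)$ as a positive multiple of $A(n_0)$, show $A(n_0)$ is non-increasing via the telescoping identity and Lemma~\ref{dn_monotonicity}(iii), and conclude discrete unimodality. You are actually more careful than the paper in spelling out the projection onto $\{\underline{n},\ldots,\overline{n}\}$; the only minor wrinkle is that $\tilde{d}^{P}_{\overline{n}}=-1$ by the paper's convention (the defining set is empty), so writing $G_{\overline{n}}(\tilde{d}^{P}_{\overline{n}})=0$ is slightly informal, but the clamp $\min(\tilde{n}_P,\overline{n})$ makes the existence of $\tilde{n}_P$ immaterial to the final formula.
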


We can now summarize the results from Propositions \ref{opt quot} and \ref{optimalthresholdprovider} in the following theorem:
\begin{theorem}
The problem of provider profit maximization in the class of dynamic lead-time quotes policies has an optimal solution $(D^{P}_{n_P},n_P)$ and the maximum provider profit is equal to:
\begin{equation*}
P^*=\lambda \sum_{n=0}^{n_P-1} q(n;n_P) G_n(\tilde{d}^{P}_{n}).
\end{equation*}
\end{theorem}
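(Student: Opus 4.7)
The plan is to assemble this theorem as a direct corollary of the two-stage decomposition \eqref{provider_max} combined with Propositions \ref{opt quot} and \ref{optimalthresholdprovider}. There is no new analytical content; the statement is a clean summary, so the work is mainly verifying that the suprema are attained and that the resulting profit takes the claimed closed form.

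First I would invoke the decomposition \eqref{provider_max}, which rewrites $P^{*}$ as $\max_{\underline{n}\leq n_0\leq\overline{n}}H(n_0)$, where $H(n_0)$ is the inner optimum \eqref{firstmaxpro} under the feasibility constraints \eqref{constraints}. For the inner problem, Proposition \ref{opt quot} exhibits the policy $D^{P}_{n_0}=(\tilde{d}^{P}_{0},\ldots,\tilde{d}^{P}_{n_0-1},\tilde{d}^{P}_{n_0}+\epsilon)$ as optimal. I would note that although the $n_0$-th coordinate is defined only up to the arbitrary $\epsilon>0$, this component is immaterial for the objective because state $n_0$ contributes nothing to $P(n_0,D_{n_0})=\lambda\sum_{n=0}^{n_0-1}q(n;n_0)G_n(d_n)$ (the customer balks there by construction of $n_0(D)$). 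Consequently the supremum in \eqref{firstmaxpro} is attained and equals
\begin{equation*}
H(n_0)=\lambda\sum_{n=0}^{n_0-1}q(n;n_0)\,G_n(\tilde{d}^{P}_{n}).
\end{equation*}

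Next I would handle the outer optimization. By Proposition \ref{range n0} the admissible set $\{\underline{n},\underline{n}+1,\ldots,\overline{n}\}$ is a finite nonempty collection of integers, so $\max_{n_0}H(n_0)$ is attained at some integer $n_P$. Proposition \ref{optimalthresholdprovider} identifies this maximizer explicitly as $n_P=\max(\underline{n},\min(\tilde{n}_P,\overline{n}))$ via the sign-change condition on $A(n_0)$. Plugging $n_P$ into the expression for $H(n_0)$ derived in the previous step yields the stated formula for $P^{*}$ and confirms that the pair $(D^{P}_{n_P},n_P)$ (for any choice of $\epsilon>0$ in the last coordinate of $D^{P}_{n_P}$) attains it.

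Since both the inner and outer problems have been solved constructively in the previously established propositions, no step here is a genuine obstacle; the only points requiring a line of care are (i) that the $\epsilon$-ambiguity in $D^{P}_{n_P}$ does not prevent the supremum in \eqref{firstmaxpro} from being attained, which follows because the profit sum stops at $n_0-1$, and (ii) that the outer supremum is a maximum, which follows from the finiteness of the feasible range for $n_0$ guaranteed by Proposition \ref{range n0}. The theorem then follows by combining these two observations.
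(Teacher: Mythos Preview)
Your proposal is correct and mirrors the paper's own treatment: the theorem is stated there explicitly as a summary of Propositions~\ref{opt quot} and~\ref{optimalthresholdprovider} together with the two-stage decomposition~\eqref{provider_max}, with no additional proof given. Your remarks about the $\epsilon$-irrelevance and the finiteness of $\{\underline{n},\ldots,\overline{n}\}$ are exactly the points needed to justify that both suprema are attained.
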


\subsection{Single quotation}\label{singleprovider}
In this subsection we assume that the lead-time quote is the same for all states. We first determine the optimal strategy for \eqref{firstmaxproc}, given a balking threshold $n_0$. Since $B_n(d)$ is decreasing in $n$, constraints \eqref{constraints}, \eqref{constraintconstant} are equivalent to $B_{n_0}(d)<0\leq B_{n_0-1}(d)$. Therefore to maximize profits under $n_0$, the service provider essentially must set a lead-time quote such that for state $n = n_0-1$ the incoming customer weakly prefers to join, i.e., $d^{P_c}_{n_0}=\tilde{d}^{P}_{n_0-1}$.  

\begin{figure}
\centering
\includegraphics[width=12cm,height=7cm]{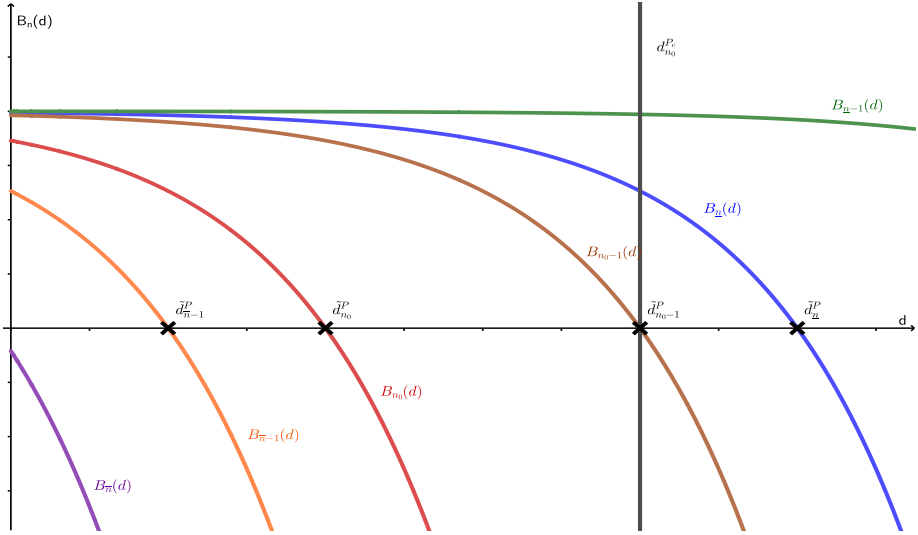}
\caption{Optimal dynamic and single lead-time quotes.}
\label{utilityfun}
\end{figure} 

In Figure \ref{utilityfun} we present several utility function curves for states $n\leq \overline{n}$, to illustrate the difference with the case of dynamic quotes. The corresponding optimal dynamic lead-time quotations for each curve are marked with the black points. The vertical line at $\tilde{d}^{P}_{n_0-1}$ represents the optimal fixed lead-time quotation strategy $d^{P_c}_{n_0}=\tilde{d}^{P}_{n_0-1}$. 

For states $n<n_0-1$ the entering customers have positive utility, in contrast with the dynamic lead-time quotation case, where the provider offers the maximum possible lead-time separately in each state, so that customers weakly prefer to join. Therefore, the single quotation approach is more favorable for the customers. On the other hand, for states $n\geq n_0$ customers have negative utility and they balk. 

Note that for $n_0=\underline{n}$, $d^{P_c}_{\underline{n}}=\tilde{d}^{P}_{\underline{n}-1}=\infty$, i.e., to enforce the lowest possible balking threshold the provider does not offer any compensation. In this case the marginal customer who joins at state $\underline{n}-1$ obtains a generally positive benefit, in contrast to the higher values of $n_0$, where the lead-time is set so that the marginal customer is indifferent. Finally it can easily be seen that the optimal single lead-time quote is decreasing in $n_0$.

We summarize these and we present the optimal single lead-time quotation strategy in the following Proposition:
\begin{proposition}\label{opt quotc}
\begin{itemize}
\item[(i.)]
For any $n_0 \in \left\lbrace \underline{n}, \ldots ,\overline{n} \right\rbrace$, the optimal single lead-time quotation strategy in \eqref{firstmaxproc} subject to the condition $n_0(D)=n_0$ is determined as follows:
\begin{equation*}
d^{P_c}_{n_0} = \begin{cases}
\infty,& \text{if }  n_0 =\underline{n}\\
\tilde{d}^{P}_{n_0-1},&   \text{if }  n_0 \in \left\lbrace \underline{n}+1, \ldots ,\overline{n} \right\rbrace\\
\end{cases}
\end{equation*}

\item[(ii.)] The optimal single lead-time quotation strategy $d^{S_c}_{n_0}$ is decreasing in $n_0$.
\end{itemize}
\end{proposition}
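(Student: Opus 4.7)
The plan is to reduce the combined feasibility constraints to a single scalar condition on $d$ and then to push $d$ to the upper feasible boundary, exploiting the monotonicity of $G_n(\cdot)$ in $d$.

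For part (i), under the single-quote restriction $d_n = d$, the family of constraints $B_n(d) \geq 0$ for $n = 0, \ldots, n_0-1$ collapses to the single condition $B_{n_0-1}(d) \geq 0$, because $B_n(d)$ is decreasing in $n$ by Lemma \ref{monotonicity}(i). By Lemma \ref{monotonicity}(iii), $G_n(d)$ is increasing in $d$ for every $n$, so the objective $P_c(n_0, d) = \lambda \sum_{n=0}^{n_0-1} q(n; n_0) G_n(d)$ is nondecreasing in $d$, and the supremum in \eqref{firstmaxproc} is attained at the largest $d$ satisfying $B_{n_0-1}(d) \geq 0$. By continuity and monotonicity of $B_{n_0-1}$, this largest value is precisely $\tilde{d}^P_{n_0-1}$. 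For $n_0 \geq \underline{n}+1$, strict decrease of $B_n(d)$ in $n$ then gives $B_{n_0}(\tilde{d}^P_{n_0-1}) < B_{n_0-1}(\tilde{d}^P_{n_0-1}) = 0$, so the upper constraint $B_{n_0}(d) < 0$ is automatically fulfilled, yielding $d^{P_c}_{n_0} = \tilde{d}^P_{n_0-1}$.

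The boundary case $n_0 = \underline{n}$ requires separate handling: Lemma \ref{dn_monotonicity}(ii) yields $\tilde{d}^P_{\underline{n}-1} = \infty$, and by the definition of $\underline{n}$ one has $\lim_{d \to \infty} B_{\underline{n}}(d) < 0$, so $d = \infty$ (no compensation) is both feasible and makes each $G_n(d)$ for $n < \underline{n}$ as large as possible, and is therefore optimal. Part (ii) then follows immediately from Lemma \ref{dn_monotonicity}(i), since for $n_0 \geq \underline{n}+1$ the identity $d^{P_c}_{n_0} = \tilde{d}^P_{n_0-1}$ inherits the monotonicity of $\tilde{d}^P_n$ in $n$, while $d^{P_c}_{\underline{n}} = \infty$ sits at the top of the sequence.

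The only delicate point is ensuring the strict upper inequality $B_{n_0}(\tilde{d}^P_{n_0-1}) < 0$, which requires \emph{strict}, rather than merely weak, decrease of $B_n$ in $n$; unlike Proposition \ref{opt quot}, where an $\epsilon$-perturbation had to be introduced in the dynamic case, here strictness comes for free from the separation between adjacent utility curves at the boundary value $\tilde{d}^P_{n_0-1}$. The remainder of the argument is a routine monotonicity chase.
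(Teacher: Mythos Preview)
Your proposal is correct and follows essentially the same approach as the paper: collapse the feasibility constraints to $B_{n_0}(d)<0\leq B_{n_0-1}(d)$ via monotonicity of $B_n$ in $n$, then push $d$ to the upper boundary $\tilde{d}^{P}_{n_0-1}$ using that $G_n(d)$ is increasing in $d$, handling $n_0=\underline{n}$ via Lemma~\ref{dn_monotonicity}(ii). Your write-up is in fact slightly more careful than the paper's inline argument, since you explicitly verify the strict upper constraint $B_{n_0}(\tilde{d}^{P}_{n_0-1})<0$ rather than leaving it implicit.
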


An immediate consequence of Proposition \ref{opt quotc} is that:
\begin{eqnarray*}
\displaystyle H_{c}(n_0)=P_{c}(n_0,d^{P_c}_{n_0})= \lambda \sum_{n=0}^{n_0-1} q(n;n_0) G_n(\tilde{d}^{P}_{n_0-1})
\end{eqnarray*}

The optimal balking threshold $n_{P_c}$ can be determined by an exhaustive search in the finite interval $\left\lbrace \underline{n}, \ldots ,\overline{n} \right\rbrace$.

We can now summarize the results in the following theorem:
\begin{theorem}
The problem of provider profit maximization in the class of single lead-time quotes policies has an optimal solution $(d^{P_c}_{n_0},n_{P_c})$ and the maximum provider profit is equal to:
\begin{equation*}
P_{c}^{*}=\lambda \sum_{n=0}^{n_{P_c}-1} q(n;n_{P_c}) G_n(d^{P_c}_{n_{P_c}}).
\end{equation*}
\end{theorem}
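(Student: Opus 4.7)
The plan is to derive this theorem as an immediate corollary of Proposition \ref{opt quotc}, combined with the two-stage decomposition \eqref{provider_maxc} and the finite range on $n_0$ provided by Proposition \ref{range n0}.

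First I would invoke Proposition \ref{range n0} to observe that for every single lead-time $d\geq 0$ the induced balking threshold $n_0(d)$ lies in the finite integer set $\{\underline{n},\underline{n}+1,\ldots,\overline{n}\}$. This justifies rewriting the outer problem $\sup_d P_c(d)$ in the two-stage form $\max_{\underline{n}\leq n_0\leq \overline{n}} H_c(n_0)$, with $H_c(n_0)=\sup\{P_c(d):n_0(d)=n_0\}$, which is precisely \eqref{provider_maxc}.

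Next, for each fixed $n_0\in\{\underline{n},\ldots,\overline{n}\}$ I would appeal to Proposition \ref{opt quotc} to identify the inner maximizer $d^{P_c}_{n_0}$. Before substituting it into the objective I would verify feasibility, i.e., that $d^{P_c}_{n_0}$ actually induces the threshold $n_0$: for $n_0>\underline{n}$ the choice $d^{P_c}_{n_0}=\tilde{d}^{P}_{n_0-1}$ gives $B_{n_0-1}(d^{P_c}_{n_0})=0$ by the defining equation of $\tilde{d}^{P}_{n_0-1}$, and strict monotonicity of $B_n$ in $n$ (Lemma \ref{monotonicity}) then yields $B_{n_0}(d^{P_c}_{n_0})<0$; for $n_0=\underline{n}$ the convention $d^{P_c}_{\underline{n}}=\infty$ (no compensation) together with the definition of $\underline{n}$ via $\lim_{d\to\infty}B_{\underline{n}}(d)<0$ yields feasibility, and $G_n$ collapses to the entrance fee $p$ in all states $n<\underline{n}$. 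Substituting produces the closed-form inner value $H_c(n_0)=\lambda\sum_{n=0}^{n_0-1} q(n;n_0)\,G_n(d^{P_c}_{n_0})$.

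Finally, $H_c$ is now a function on the finite integer set $\{\underline{n},\ldots,\overline{n}\}$, so its maximum is trivially attained at some integer $n_{P_c}$, which turns the outer supremum in \eqref{provider_maxc} into a genuine maximum; substituting back gives the stated formula $P_c^{*}=\lambda\sum_{n=0}^{n_{P_c}-1} q(n;n_{P_c})\,G_n(d^{P_c}_{n_{P_c}})$. The only delicate point I anticipate is the bookkeeping at the boundary case $n_0=\underline{n}$ with an infinite quote, where one must interpret $G_n(\infty)=p$ and verify that the steady-state distribution $q(\cdot\,;\underline{n})$ is still well-defined; beyond this, the argument is purely a composition of the two-stage decomposition, Propositions \ref{range n0} and \ref{opt quotc}, and the finiteness of the threshold search range, so no structural property of $H_c$ such as unimodality is needed for existence of the maximum.
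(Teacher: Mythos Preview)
Your proposal is correct and follows essentially the same route as the paper: the paper presents this theorem as a direct summary of Proposition~\ref{opt quotc} together with the two-stage decomposition~\eqref{provider_maxc} and the exhaustive search over the finite range $\{\underline{n},\ldots,\overline{n}\}$ from Proposition~\ref{range n0}, without a separate formal proof. Your write-up simply makes explicit the feasibility checks and the boundary case $n_0=\underline{n}$ with $d=\infty$ that the paper leaves implicit.
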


\section{Social Benefit-Optimal Quotation Policy}\label{Social_welfare}
In this section we solve the two-stage problems \eqref{social_max} and \eqref{social_maxc} for optimizing the social benefit per unit time under the class of dynamic and single quotation strategies respectively. The optimization problem is more involved because the provider and the customers have conflicting interests. As a result the optimal quotes are not simply the maximum possible that will induce customers to join as in the profit maximization case, but must be determined by considering the customer benefit as well. 

We follow the same approach as in the profit maximization problem and analyze the problem in two stages, where a balking threshold $n_0$ is set at the first stage and the socially optimal quotation strategy that results in $n_0$ at equilibrium is derived at the second stage.

Under both quotations strategies (dynamic and single) we show that the objective function in the second stage is unimodal in the lead-time, thus the optimal can be found by differentiation. In the first stage, we show that, under dynamic quotation the optimal balking threshold can be determined by an efficient algorithm analogous in Section \ref{dynamicprovider}, whereas under single quotation it is derived through an exhaustive search in the finite interval of balking thresholds $\lbrace \underline{n}, \underline{n}+1, \ldots \overline{n}\rbrace$.

\subsection{Dynamic quotation}\label{dynamicsocial}
In this subsection we solve problems \eqref{social_max} and \eqref{firstsocpro} to find the optimal lead-time quotations and the optimal balking threshold respectively, from the point of view of the social optimizer. The main difference with the corresponding profit maximization model is that when the service provider maximizes his/her profits, he or she sets the lead-time equal to $\tilde{d}^{P}_{n}$ which is the maximum possible lead-time that will entice the customer to join in this state. Here we show that the social optimizer may set a lead-time lower than $\tilde{d}^{P}_{n}$, thus leaving part of the benefit to the customer. We also show that the optimal balking threshold can be determined by an algorithm analogous to that of Proposition \ref{optimalthresholdprovider}. 

We start with the second stage problem. For a given $n_0$, problem \eqref{firstsocpro} can be expressed as: 
\begin{equation*}
Z(n_0)=\sup_{d_0, d_1, \ldots, d_{n_0}} \lambda \sum_{n=0}^{n_0-1} q(n;n_0) ( G_n(d_n)+ B_n(d_n)),
\end{equation*}
where $d_0, d_1, \ldots, d_{n_0}$ are quotes that satisfy \eqref{constraints}.

We first observe that given $n_0$, $d_n$ is constrained in $d_n \leq \tilde{d}^{P}_{n}$ for $n \leq n_0-1$, and $d_{n_0}>\tilde{d}^{P}_{n_0}$.

Furthermore, each term $G_n(d_n)+ B_n(d_n)$ in $Z(n_0)$ can be maximized separately in $d_n$ and the optimal value of $d_n$ does not depend on $n_0$.
Therefore, if we determine the optimal lead-time quotation $\tilde{d}^{S}_{n}$, that induces customers to join in state $n$, for each $n=0,1,\ldots,\overline{n}-1$, we can obtain the optimal quotation strategy given any $n_0\in\lbrace \underline{n}, \underline{n}+1, \ldots \overline{n}\rbrace$ by simply setting the lead-time equal to $\tilde{d}^{S}_{n}$ for $n=0,\ldots,n_0-1$ and any $d_{n_0}>\tilde{d}^{P}_{n}$ for $n=n_0$.

In the following Proposition we derive the optimal lead-time $\tilde{d}^{S}_{n}$ for $n=0,1,\ldots,\overline{n}-1$, after establishing a unimodality property of $G_n(d_n)+B_n(d_n)$.

Let: 
\begin{equation}\label{dtildesoc}
a(d_n)=\frac{\overline{F}_{n,\mu}(d_n)}{\overline{F}_{n,v}(d_n)}e^{-rld_n}-\left( \frac{\mu}{v}\right)^{n+1} e^{-r(R-p)},
\end{equation}
where $v=\mu-r(c-l)$ and $\overline{F}_{n,\mu}(d_n)=P(X_n>d_n)$ for $X_n\sim Gamma(n+1,\mu)$. 

\begin{proposition}\label{unimodal}
For any $n \in \left\lbrace 0,1, \ldots ,\overline{n}-1 \right\rbrace$, $G_n(d_n)+B_n(d_n)$ has a unique maximizing lead-time $\tilde{d}^{S}_{n}<\infty$. Specifically:

If $\tilde{d}^{P}_{n}<\infty$ and $a(\tilde{d}^{P}_{n})\geq 0$ then $\tilde{d}^{S}_{n}=\tilde{d}^{P}_{n}$, otherwise $\tilde{d}^{S}_{n}$ is the unique solution of $a(d_n)=0$ in  $[0,\tilde{d}^{P}_{n})$.
\end{proposition}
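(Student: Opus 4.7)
The plan is to reduce the optimization to analyzing the sign of the compact function $a(d)$ appearing in \eqref{dtildesoc}. First, I would compute $G_n'(d)$ and $B_n'(d)$ from their integral definitions. Leibniz's rule gives $G_n'(d) = l\,\overline{F}_{n,\mu}(d)$. For $B_n(d)$, when differentiating, the boundary contributions at $x=d$ from the two integrals cancel because the two integrands agree at that point, and the remaining piece is recognized as a tail of a Gamma$(n+1,v)$, yielding
\begin{equation*}
B_n'(d) = -l\, e^{-r(R-p)}\, e^{rld}\left(\tfrac{\mu}{v}\right)^{n+1} \overline{F}_{n,v}(d).
\end{equation*}
Summing and factoring out the strictly positive quantity $l\,\overline{F}_{n,v}(d)\, e^{rld}$ produces the clean identity $(G_n+B_n)'(d) = l\,\overline{F}_{n,v}(d)\, e^{rld}\, a(d)$, so $(G_n+B_n)'$ and $a$ share the same sign.

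The heart of the proof is showing that $a$ is strictly decreasing. Using $\overline{F}_{n,\alpha}(d) = e^{-\alpha d}\sum_{k=0}^{n}(\alpha d)^k/k!$ and taking the logarithmic derivative of the first summand of $a$, the identity $\mu - v = r(c-l)$ simplifies the result to
\begin{equation*}
\frac{d}{dd}\log\!\left(\frac{\overline{F}_{n,\mu}(d)}{\overline{F}_{n,v}(d)}e^{-rld}\right) = \lambda_v(d) - \lambda_\mu(d) - rl,
\end{equation*}
where $\lambda_\alpha(\cdot)$ is the hazard rate of Gamma$(n+1,\alpha)$. The key observation is the scaling identity $\lambda_\alpha(d) = \alpha\, g(\alpha d)$ with $g(x) = (x^n/n!)/\sum_{k=0}^{n}x^k/k!$, together with the fact that $g$ is strictly increasing on $(0,\infty)$ (its reciprocal $1/g(x) = \sum_{j=0}^{n} n!/((n-j)!\, x^j)$ is strictly decreasing in $x$). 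Because $\mu > v$, this immediately gives $\lambda_\mu(d) \geq \lambda_v(d)$ for every $d \geq 0$, so the log-derivative is $\leq -rl < 0$, establishing strict monotonicity of $a$.

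It remains to combine this with endpoint information and to distinguish the two cases. At $d = 0$ we have $a(0) = 1 - (\mu/v)^{n+1} e^{-r(R-p)} \geq 0$, since $n \leq \overline{n}-1$ by the very definition of $\overline{n}$. As $d \to \infty$, $e^{-rcd}$ dominates the polynomial ratio $P_n(\mu d)/P_n(vd)$, so $a(d) \to -(\mu/v)^{n+1} e^{-r(R-p)} < 0$. Strict monotonicity plus continuity therefore produce a unique zero $d^\star \in [0,\infty)$ of $a$. The feasible region for $d_n$ is $[0,\tilde{d}^P_n]$ when $\tilde{d}^P_n < \infty$ (by Lemma \ref{dn_monotonicity}) or $[0,\infty)$ otherwise. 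If $\tilde{d}^P_n < \infty$ and $a(\tilde{d}^P_n) \geq 0$, then $d^\star \geq \tilde{d}^P_n$, $a \geq 0$ throughout the feasible region, $G_n+B_n$ is nondecreasing there, and the maximizer is $\tilde{d}^S_n = \tilde{d}^P_n$; otherwise $d^\star < \tilde{d}^P_n$ (or $\tilde{d}^P_n = \infty$), $(G_n+B_n)$ is increasing on $[0,d^\star]$ and decreasing beyond, so $\tilde{d}^S_n = d^\star$ is the unique solution of $a(d_n)=0$ in $[0,\tilde{d}^P_n)$.

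The main obstacle is proving the monotonicity of $a$ in the second step. Neither $B_n$ nor $G_n+B_n$ is globally concave in general (a quick look at $B_n''$ shows that its sign is governed by whether the Gamma$(n+1,v)$ hazard rate exceeds $rl$, which need not hold), so direct second-derivative arguments fail. The scaling identity $\lambda_\alpha(d) = \alpha\, g(\alpha d)$ with $g$ increasing is the clean trick that reduces the comparison of two non-trivial hazard rates to a one-line monotonicity statement; without it, bounding $\lambda_v - \lambda_\mu$ directly is awkward.
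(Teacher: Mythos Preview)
Your proposal is correct and follows essentially the same route as the paper: reduce $(G_n+B_n)'$ to a positive multiple of $a(d)$, show $a$ is decreasing via monotonicity of the Gamma hazard rate in the rate parameter, check the endpoint signs $a(0)\geq 0$ and $a(\infty)<0$, and then split into the two cases depending on $\tilde d^{P}_n$. The only noteworthy difference is cosmetic: the paper proves $h_{n,\mu}(d)$ is increasing in $\mu$ by writing $1/h_{n,\mu}(d)$ as an explicit sum with negative powers of $\mu$, whereas you use the equivalent scaling identity $\lambda_\alpha(d)=\alpha\,g(\alpha d)$ with $g$ increasing---both arguments encode the same elementary computation.
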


The optimal dynamic lead-time quotation strategy for problem \eqref{firstsocpro} is now summarized in the next Proposition:
\begin{proposition}\label{optquotsoc}
For any $n_0 \in \left\lbrace \underline{n}, \ldots ,\overline{n} \right\rbrace$, an optimal dynamic quotation strategy in \eqref{firstsocpro} subject to the condition $n_0(D)=n_0$ is determined as follows:
\begin{equation*}
D^{S}_{n_0}=(\tilde{d}^{S}_{0},\ldots,\tilde{d}^{S}_{n_0-1},\tilde{d}^{P}_{n_0}+\epsilon),
\end{equation*}
for any $\epsilon>0$.
\end{proposition}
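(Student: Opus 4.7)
My plan is to exploit the separable structure of the second-stage objective once the threshold $n_0$ is fixed. In the expression
\begin{equation*}
\lambda \sum_{n=0}^{n_0-1} q(n;n_0)\bigl(G_n(d_n)+B_n(d_n)\bigr),
\end{equation*}
the weights $q(n;n_0)$ are fixed positive constants, each summand depends on a single decision variable $d_n$, and $d_{n_0}$ does not appear at all, since the customer at state $n_0$ balks and contributes nothing. Hence problem \eqref{firstsocpro} decouples into $n_0$ independent univariate maximizations, plus a separate feasibility requirement on $d_{n_0}$ coming from the balking constraint.

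First I would translate the constraints \eqref{constraints} into per-coordinate bounds. By definition of $\tilde{d}^{P}_n$ and monotonicity of $B_n$ in $d$ from Lemma \ref{monotonicity}, the conditions $B_n(d_n)\geq 0$ and $d_n\geq 0$ are jointly equivalent to $d_n\in [0,\tilde{d}^{P}_n]$ for each $n\leq n_0-1$ (vacuously the whole half-line when $\tilde{d}^{P}_n=\infty$). Likewise, $B_{n_0}(d_{n_0})<0$ is equivalent to $d_{n_0}>\tilde{d}^{P}_{n_0}$, which the choice $d_{n_0}=\tilde{d}^{P}_{n_0}+\epsilon$ satisfies for any $\epsilon>0$ (with the proviso that when $\tilde{d}^{P}_{n_0}<0$ the bound $d_{n_0}\geq 0$ must still be honored, requiring $\epsilon$ to be taken large enough).

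Next I would invoke Proposition \ref{unimodal} coordinate-wise: for each $n\in\{0,\ldots,n_0-1\}\subseteq\{0,\ldots,\overline{n}-1\}$, the function $G_n(d_n)+B_n(d_n)$ admits a unique finite maximizer $\tilde{d}^{S}_n$, and the proposition additionally certifies that this maximizer lies in $[0,\tilde{d}^{P}_n]$. Hence $\tilde{d}^{S}_n$ is feasible for the decoupled $n$-th subproblem, and substituting $d_n=\tilde{d}^{S}_n$ simultaneously maximizes every term of the sum. Combining with the choice $d_{n_0}=\tilde{d}^{P}_{n_0}+\epsilon$, which is the unique way of enforcing $n_0(D)=n_0$ at that coordinate, yields the stated optimal vector $D^{S}_{n_0}$.

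There is no real analytical obstacle here: the heavy lifting is carried out inside Proposition \ref{unimodal}, and what remains is essentially a decomposition argument together with a feasibility check that $\tilde{d}^{S}_n$ sits inside the allowed box $[0,\tilde{d}^{P}_n]$. The only subtle bookkeeping concerns the two boundary regimes, namely $\tilde{d}^{P}_n=\infty$ for $n<\underline{n}$ (where the upper feasibility bound is vacuous and $\tilde{d}^{S}_n$ still exists by the proposition) and $\tilde{d}^{P}_{n_0}<0$ when $n_0=\overline{n}$ (where $\epsilon$ must be selected with $\tilde{d}^{P}_{n_0}+\epsilon\geq 0$). Neither case affects the conclusion that the displayed $D^{S}_{n_0}$ is optimal for \eqref{firstsocpro} subject to $n_0(D)=n_0$.
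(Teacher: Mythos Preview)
Your proposal is correct and follows essentially the same approach as the paper: the paper's argument (given in the narrative of Section~\ref{dynamicsocial} rather than as a formal proof) also observes that the objective is separable with positive weights, that the constraints \eqref{constraints} reduce to the box $[0,\tilde{d}^{P}_{n}]$ for $n\leq n_0-1$ and $d_{n_0}>\tilde{d}^{P}_{n_0}$, and then invokes Proposition~\ref{unimodal} to identify $\tilde{d}^{S}_{n}$ as the coordinate-wise maximizer. Your treatment is in fact slightly more careful than the paper's in flagging the boundary case $n_0=\overline{n}$, where the convention $\tilde{d}^{P}_{\overline{n}}=-1$ requires $\epsilon$ large enough to keep $d_{n_0}\geq 0$.
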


As it is expected, the criterion of social benefit maximization is more favorable for the customers. Indeed, since it is generally true that $\tilde{d}^{S}_{n}<\tilde{d}^{P}_{n}$ for states $n \geq \underline{n}$, customers may obtain a positive net benefit from joining, although under profit maximization, their benefit equals zero in these states. Furthermore, even in state $n<\underline{n}$,  $\tilde{d}^{S}_{n}<\tilde{d}^{P}_{n}=\infty$, which means that the social optimizer offers compensation for sufficiently large delays, although under profit maximization the provider does not offer any compensation at all.

From Proposition \ref{optquotsoc} we see that the supremum in \ref{social_max} is attained by $D^{S}_{n_0}$. Therefore, 
\begin{eqnarray*}
\displaystyle Z(n_0)=S(n_0,D^{S}_{n_0})= \lambda \sum_{n=0}^{n_0-1} q(n;n_0) (G_n+B_n)(\tilde{d}^{S}_{n})
\end{eqnarray*}

Turning to the first-stage problem, in Proposition \ref{optimalthresholdsoc} we show that the optimal balking threshold $n_S$ can be determined by an algorithm similar to that in Proposition \ref{optimalthresholdprovider}, instead of a search in all values $\underline{n},\ldots,\overline{n}$.

\begin{proposition}\label{optimalthresholdsoc}
The optimal $n_S$ for the optimization problem in \eqref{social_max} is
\begin{equation*}
n_S = \max(\underline{n}, \min ( \tilde{n}_S,\overline{n} )),
\end{equation*}
where, $$\tilde{n}_S=\min\left( n_0: Y(n_0)<0 \right) ,$$ and
$$Y(n_0)=(G_{n_0}+B_{n_0})(\tilde{d}^{S}_{n_0})\displaystyle\sum_{n=0}^{n_0} \rho^{n}-\rho \displaystyle\sum_{n=0}^{n_0-1} \rho^{n} (G_n+B_n)(\tilde{d}^{S}_{n}).$$
\end{proposition}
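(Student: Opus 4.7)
The plan is to mirror the argument used for Proposition \ref{optimalthresholdprovider}, exploiting the fact that once the second-stage problem has been solved (Proposition \ref{optquotsoc}), $Z(n_0)$ depends on $n_0$ only through the $M/M/1/n_0$ stationary weights. Substituting the optimal quotes, I would write
\begin{equation*}
Z(n_0)=\lambda\,\frac{\sum_{n=0}^{n_0-1}\rho^{n}(G_n+B_n)(\tilde{d}^{S}_{n})}{\sum_{n=0}^{n_0}\rho^{n}},
\end{equation*}
and then compute the forward difference $Z(n_0+1)-Z(n_0)$. After cross-multiplying and using the identities $\sum_{n=0}^{n_0+1}\rho^n=\sum_{n=0}^{n_0}\rho^n+\rho^{n_0+1}$ and $\sum_{n=0}^{n_0}\rho^n(G_n+B_n)(\tilde{d}^{S}_{n})=\sum_{n=0}^{n_0-1}\rho^n(G_n+B_n)(\tilde{d}^{S}_{n})+\rho^{n_0}(G_{n_0}+B_{n_0})(\tilde{d}^{S}_{n_0})$, the increment collapses to $\lambda\rho^{n_0}Y(n_0)/[(\sum_{n=0}^{n_0+1}\rho^n)(\sum_{n=0}^{n_0}\rho^n)]$. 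Hence the sign of $Z(n_0+1)-Z(n_0)$ coincides with the sign of $Y(n_0)$, so the transition $Y(n_0)\geq 0 \to Y(n_0)<0$ marks the optimum.

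The next step, which I expect to be the main obstacle, is to show that $Y(n_0)$ changes sign only once, i.e.\ $Y$ is monotone decreasing, so that $\tilde{n}_S$ is well-defined and $Z$ is unimodal. Applying the telescoping trick to $Y(n_0+1)-Y(n_0)$, I would establish
\begin{equation*}
Y(n_0+1)-Y(n_0)=\left[(G_{n_0+1}+B_{n_0+1})(\tilde{d}^{S}_{n_0+1})-(G_{n_0}+B_{n_0})(\tilde{d}^{S}_{n_0})\right]\sum_{n=0}^{n_0+1}\rho^{n},
\end{equation*}
so the question reduces to proving that the sequence $(G_n+B_n)(\tilde{d}^{S}_{n})$ is decreasing in $n$. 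The argument here chains two facts: by Lemma \ref{monotonicity}, for \emph{any} fixed $d$, both $G_n(d)$ and $B_n(d)$ are decreasing in $n$, hence $(G_{n+1}+B_{n+1})(d)\leq (G_n+B_n)(d)$; and by the optimality of $\tilde{d}^{S}_{n}$ for the stage-two problem in state $n$ (Proposition \ref{unimodal}), $(G_n+B_n)(\tilde{d}^{S}_{n+1})\leq (G_n+B_n)(\tilde{d}^{S}_{n})$. Chaining these two inequalities gives $(G_{n+1}+B_{n+1})(\tilde{d}^{S}_{n+1})\leq (G_n+B_n)(\tilde{d}^{S}_{n+1})\leq (G_n+B_n)(\tilde{d}^{S}_{n})$, as required. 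This yields monotonicity of $Y$, unimodality of $Z$, and the characterization $\tilde{n}_S=\min\{n_0:Y(n_0)<0\}$ for the unconstrained optimum.

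Finally, I would incorporate the feasibility constraint $\underline{n}\leq n_0\leq \overline{n}$ from Proposition \ref{range n0}. Three cases arise: if $\tilde{n}_S\in[\underline{n},\overline{n}]$, the unconstrained and constrained optima agree; if $\tilde{n}_S<\underline{n}$, then $Z$ is already decreasing on $[\underline{n},\overline{n}]$ so the optimum is $\underline{n}$; and if $\tilde{n}_S>\overline{n}$, then $Z$ is still increasing on $[\underline{n},\overline{n}]$ so the optimum is $\overline{n}$. Each case collapses to the formula $n_S=\max(\underline{n},\min(\tilde{n}_S,\overline{n}))$, completing the proof. The structure is formally identical to that of Proposition \ref{optimalthresholdprovider}; the only nontrivial novelty is verifying monotonicity of $(G_n+B_n)(\tilde{d}^{S}_{n})$, since $\tilde{d}^{S}_{n}$ is no longer a boundary maximizer of $B_n$ but an interior maximizer of $G_n+B_n$, so the envelope argument must be invoked explicitly.
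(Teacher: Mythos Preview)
Your proposal is correct and follows essentially the same route as the paper. The paper's own proof simply states that the argument is ``along similar lines as in Proposition~\ref{optimalthresholdprovider}, using that $(G_n+B_n)(\tilde{d}^{S}_{n})$ is decreasing in $n$ from Lemma~\ref{Fn_monotonicitysoc}''; your write-up spells out precisely those details, and your chaining argument for the monotonicity of $(G_n+B_n)(\tilde{d}^{S}_{n})$ is exactly the content of Lemma~\ref{Fn_monotonicitysoc}(i).
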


We can now summarize the results from Propositions \ref{optquotsoc} and \ref{optimalthresholdsoc} in the following theorem:
\begin{theorem}
The problem of social benefit maximization in the class of dynamic lead-time quotes policies has an optimal solution $(D^{S}_{n_S},n_S)$ and the maximum social benefit is equal to:
\begin{equation*}
S^*=\lambda \sum_{n=0}^{n_S-1} q(n;n_S) (G_n(\tilde{d}^{S}_{n})+B_n(\tilde{d}^{S}_{n})).
\end{equation*}
\end{theorem}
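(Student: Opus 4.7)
The plan is to synthesize Propositions \ref{optquotsoc} and \ref{optimalthresholdsoc} within the two-stage decomposition \eqref{social_max}. The inner stage has already been resolved in Proposition \ref{optquotsoc}: for each fixed $n_0 \in \lbrace \underline{n}, \ldots, \overline{n} \rbrace$, the supremum in \eqref{firstsocpro} is attained by $D^{S}_{n_0} = (\tilde{d}^{S}_0, \ldots, \tilde{d}^{S}_{n_0-1}, \tilde{d}^{P}_{n_0}+\epsilon)$, with value $Z(n_0) = \lambda \sum_{n=0}^{n_0-1} q(n;n_0)(G_n+B_n)(\tilde{d}^{S}_n)$. The outer stage is resolved by Proposition \ref{optimalthresholdsoc}, which selects $n_S = \max(\underline{n}, \min(\tilde{n}_S, \overline{n}))$ from the finite interval $\lbrace \underline{n}, \ldots, \overline{n} \rbrace$.

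The first thing I would verify is that the supremum in \eqref{firstsocpro} is in fact attained rather than merely approached. Proposition \ref{unimodal} guarantees that $\tilde{d}^{S}_n < \infty$ and achieves the pointwise maximum of $G_n(d) + B_n(d)$ on the admissible set for each $n \leq \overline{n}-1$. The final coordinate $d_{n_0}$ does not enter the steady-state sum in $S(D)$, because that sum runs only from $n=0$ to $n=n_0-1$; it is needed solely to enforce the balking constraint $B_{n_0}(d_{n_0}) < 0$, which holds for any $d_{n_0} > \tilde{d}^{P}_{n_0}$. Hence every $\epsilon > 0$ produces the same value $Z(n_0)$, and this common value is the attained maximum.

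With the inner maxima attained and the outer index set finite, the overall maximum $S^*$ is achieved at the pair $(D^{S}_{n_S}, n_S)$. Substituting $n_0 = n_S$ into the inner-stage expression yields the claimed formula for $S^*$. Given that the underlying propositions do the heavy lifting, there is no serious obstacle in this final synthesis; the theorem is essentially a bookkeeping consequence of the two preceding propositions. The only delicate point is the existence of an admissible $d_{n_0}$ with $B_{n_0}(d_{n_0}) < 0$, which is guaranteed for every $n_0 \leq \overline{n}$ by the definition of $\overline{n}$ supplied by Proposition \ref{range n0}.
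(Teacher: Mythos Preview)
Your proposal is correct and follows exactly the paper's approach: the theorem is stated as a summary of Propositions \ref{optquotsoc} and \ref{optimalthresholdsoc}, and the paper offers no separate proof beyond that. Your write-up is in fact more explicit than the paper's, since you verify attainment of the inner supremum and the irrelevance of the $d_{n_0}$-coordinate to the objective value, points the paper leaves implicit.
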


\subsection{Single quotation}\label{singlesocial}
In this subsection we assume that the lead-time quote is the same for all states. We solve problems \eqref{social_maxc} and \eqref{firstsocproc} to find the optimal single lead-time quotation strategy and the optimal balking threshold that maximize the social benefit.
For the second stage problem, the main difference from the case of dynamic quotation is that the separability property of the objective function in $d_n, n=0,\ldots,n_0-1$ does not hold any more, since the quoted lead-time $d$ is the same for all states.

We first find the interval of feasible values for $d$ that ensure a balking threshold $n_0$. We then show that the objective function in \eqref{firstsocproc} is unimodal in $d$ and identify the optimal quote as the solution of an equation.
For the first stage problem the optimal value of the balking threshold $n_0$ is determined by an exhaustive search in the interval $\left\lbrace \underline{n}, \ldots ,\overline{n} \right\rbrace$.

We start with the second stage problem. For a given $n_0$, problem \eqref{firstsocproc} can be expressed as:
\begin{equation*}
Z_c(n_0)=\sup_{d} \lambda \sum_{n=0}^{n_0-1} q(n;n_0) ( G_n(d)+ B_n(d)),
\end{equation*}
where $d$ is a quote that satisfies \eqref{constraints} and\eqref{constraintconstant}.

Let $n_0\in\left\lbrace \underline{n}+1, \ldots ,\overline{n}-1 \right\rbrace$ be a balking threshold.
Since $B_n(d)$ is decreasing in $d$ and $n$, $d$ must be strictly higher than $\tilde{d}^{P}_{n_0}$, so that the $n_0^{\text{th}}$ customer in the queue will be rejected. At the same time, $d$ cannot exceed $\tilde{d}^{P}_{n_0-1}$, otherwise the $(n_{0}-1)^{\text{th}}$ customer will balk. Therefore the optimal lead-time quote $d^{S_c}_{n_0}$ is constrained in the interval $\tilde{d}^{P}_{n_0}<d^{S_c}_{n_0} \leq \tilde{d}^{P}_{n_0-1}$. 
In terms of the optimal single quote under profit maximization, it follows that $d^{P_c}_{n_0+1}<d^{S_c}_{n_0} \leq d^{P_c}_{n_0}$, which means that the socially optimal lead-time is generally lower than the profit maximizing value for any $n_0$, similarly to the dynamic quotation case.

Furthermore, for $d^{S_c}_{n_0}$ and $d^{S_c}_{n_0+1}$ the above inequality implies that: 
\begin{equation*}
\tilde{d}^{P}_{n_0+1}<d^{S_c}_{n_0+1} \leq \tilde{d}^{P}_{n_0}<d^{S_c}_{n_0} \leq \tilde{d}^{P}_{n_0-1},
\end{equation*}
therefore the optimal single quote is decreasing in $n_0$ and $d^{S_c}_{n_0} \leq d^{P_c}_{n_0}$.

If $n_0=\overline{n}$, then  $B_{\overline{n}}(d)<0$ for all $d \geq 0$.  In this case the feasible interval is $0 \leq d^{S_c}_{\overline{n}} \leq \tilde{d}^{P}_{\overline{n}-1}$, since the $\overline{n}^{\text{th}}$ customer will never join for any $d\geq0$.
If $n_0=\underline{n}$, since $\tilde{d}^{P}_{\underline{n}-1}=\infty$ the feasible interval becomes $\tilde{d}^{P}_{\underline{n}}<d^{S_c}_{\underline{n}} \leq \infty$, where we recall that $d=\infty$ denotes no compensation.

We can summarize the above discussion in the following lemma:
\begin{lemma}\label{optsocc}
\begin{itemize}
\item[(i.)]
For any $n_0 \in \left\lbrace \underline{n}+1, \ldots ,\overline{n}-1 \right\rbrace$, the optimal single lead-time quotation strategy $d^{S_c}_{n_0}$  in \eqref{firstsocproc} subject to the condition $n_0(d)=n_0$ is restricted in the following interval:
\begin{equation*}
d^{S_c}_{n_0}\in (\tilde{d}^{P}_{n_0}, \tilde{d}^{P}_{n_0-1}]= (d^{P_c}_{n_0+1}, d^{P_c}_{n_0}].
\end{equation*}

If $n_0=\underline{n}$ then 
\begin{equation*}
d^{S_c}_{n_0}\in (\tilde{d}^{P}_{n_0}, \infty]=(d^{P_c}_{n_0+1},\infty].
\end{equation*}

If $n_0=\overline{n}$ then 
\begin{equation*}
d^{S_c}_{n_0}\in [0, \tilde{d}^{P}_{n_0-1}]=[0,d^{P_c}_{n_0}].
\end{equation*}

\item[(ii.)] The optimal single lead-time quotation strategy $d^{S_c}_{n_0}$ is decreasing in $n_0$.
\end{itemize}
\end{lemma}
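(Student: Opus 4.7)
The plan is to establish part (i) directly from the constraints \eqref{constraints} combined with the single-quote restriction \eqref{constraintconstant}, using the monotonicity of $B_n(d)$ from Lemma \ref{monotonicity} and the definition of $\tilde{d}^P_n$ in \eqref{tildeoptimalprovider}. Under $d_n=d$, Lemma \ref{monotonicity}(i) (decrease in $n$) collapses the constraints $B_n(d)\geq 0$ for $n=0,\ldots,n_0-1$ to the single requirement $B_{n_0-1}(d)\geq 0$. Together with $B_{n_0}(d)<0$, this yields a two-sided inequality on $d$ which I will invert using Lemma \ref{monotonicity}(i)'s strict monotonicity in $d$. Part (ii) will then follow at once by comparing the admissible intervals for consecutive thresholds.

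For the generic case $n_0\in\{\underline{n}+1,\ldots,\overline{n}-1\}$, continuity and strict decrease of $B_n(\cdot)$ where it is finite turn $B_{n_0-1}(d)\geq 0$ into $d\leq \tilde{d}^P_{n_0-1}$ and $B_{n_0}(d)<0$ into $d>\tilde{d}^P_{n_0}$, giving $d^{S_c}_{n_0}\in(\tilde{d}^P_{n_0},\tilde{d}^P_{n_0-1}]$. Proposition \ref{opt quotc}(i) then supplies the alternative form by identifying $\tilde{d}^P_{n_0-1}=d^{P_c}_{n_0}$ and $\tilde{d}^P_{n_0}=d^{P_c}_{n_0+1}$. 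The two boundary cases are handled by plugging in conventions already in force: for $n_0=\underline{n}$, Lemma \ref{dn_monotonicity}(ii) gives $\tilde{d}^P_{\underline{n}-1}=\infty$, so the upper bound becomes $\infty$; for $n_0=\overline{n}$, the definition of $\overline{n}$ in Proposition \ref{range n0} gives $B_{\overline{n}}(0)<0$, so every $d\geq 0$ already deters the $\overline{n}$-th customer and the lower constraint degenerates to $d\geq 0$.

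For part (ii), since Lemma \ref{dn_monotonicity}(i) gives $\tilde{d}^P_n$ decreasing in $n$, the admissible intervals $(\tilde{d}^P_{n_0+1},\tilde{d}^P_{n_0}]$ for $d^{S_c}_{n_0+1}$ and $(\tilde{d}^P_{n_0},\tilde{d}^P_{n_0-1}]$ for $d^{S_c}_{n_0}$ are disjoint and separated by $\tilde{d}^P_{n_0}$; hence $d^{S_c}_{n_0+1}\leq \tilde{d}^P_{n_0}<d^{S_c}_{n_0}$. The boundary cases require only substituting the endpoint conventions from part (i) and carry no new content.

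I do not expect a serious obstacle because the lemma concerns feasibility rather than optimisation: it pins down the admissible set, not the maximiser within it, so the unimodality argument needed elsewhere in Section \ref{singlesocial} is not invoked here. The only care point is keeping strict versus weak inequalities aligned — the strict $B_{n_0}(d)<0$ together with the weak $B_{n_0-1}(d)\geq 0$ is what produces the half-open interval $(\cdot,\cdot]$ — and checking that the two boundary conventions $\tilde{d}^P_{\underline{n}-1}=\infty$ and $B_{\overline{n}}(0)<0$ slot cleanly into the generic formula.
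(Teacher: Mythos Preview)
Your proposal is correct and follows essentially the same approach as the paper: the paper's argument (given in the text immediately preceding the lemma rather than in the appendix) likewise reduces the constraints \eqref{constraints}--\eqref{constraintconstant} via the monotonicity of $B_n(d)$ in $n$ and $d$ to the two-sided bound $\tilde{d}^{P}_{n_0}<d\leq \tilde{d}^{P}_{n_0-1}$, handles the boundary cases $n_0=\underline{n}$ and $n_0=\overline{n}$ through the same conventions you invoke, and deduces part~(ii) from the disjointness of consecutive intervals.
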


We now consider the computation of the optimal quote.

Let:
\begin{equation}\label{dtildesocc}
a_c(d)=\frac{\overline{F}_{\mu}(d)}{\overline{F}_{v}(d)}e^{-rld}-  \beta e^{-r(R-p)},
\end{equation}
where $v=\mu-r(c-l)$, $\beta=\left(  \frac{\mu-\lambda}{1-\rho^{n_0+1}} \right) \left( \frac{1- \left( \frac{\lambda}{v} \right)^{n_0+1}  }{v-\lambda} \right)$, $\overline{F}_{\mu}(d)=P(X>d)$ and $X$ is the sojourn time of a customer, in a finite capacity $M/M/1/n_0$ queue with service rate $\mu$ in steady state.

\begin{proposition}\label{unimodalc}
For any $n_0 \in \left\lbrace \underline{n}, \ldots ,\overline{n} \right\rbrace$ there are the following cases:
\begin{itemize}
\item[(i.)] If $a_c(\tilde{d}^{P}_{n_0})\leq 0$ then either ${d}^{S_c}_{\overline{n}}=0$, or there is no optimal single lead-time quote such that $n_0(d)=n_0$, i.e., the sup in \eqref{firstsocproc} is not attained by any $d\in(\tilde{d}^{P}_{n_0},\tilde{d}^{P}_{n_0-1}]$. In the second case there exist $\epsilon$-optimal lead-time quotes.
\item[(ii.)] If $a_c(\tilde{d}^{P}_{n_0-1})\geq 0$ then ${d}^{S_c}_{n_0}=\tilde{d}^{P}_{n_0-1}<\infty$ is the unique optimal lead-time quote.
\item[(iii.)] Otherwise ${d}^{S_c}_{n_0}<\infty$ is the unique solution of  $a_c(d)=0$ in  $(\tilde{d}^{P}_{n_0},\tilde{d}^{P}_{n_0-1}]$.
\end{itemize}
\end{proposition}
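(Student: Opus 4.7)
The plan is to reduce the optimisation to a first-order condition $a_c(d)=0$ and to establish enough monotonicity to identify (or rule out) the unique optimum in the feasible interval $(\tilde{d}^{P}_{n_0},\tilde{d}^{P}_{n_0-1}]$ from Lemma~\ref{optsocc}. First, I would compute $(G_n+B_n)'(d)$. Since $G_n(d)=p-lL_n(d)$ and $L_n'(d)=-\overline{F}_{n,\mu}(d)$, the provider part contributes $l\overline{F}_{n,\mu}(d)$. Leibniz differentiation of the two integrals that define $B_n(d)$ cancels the boundary contributions at $x=d$ (the two integrands agree there, because $U_1(d)=U_2(d,d)$), and after absorbing $e^{r(c-l)x}f_{n,\mu}(x)=(\mu/v)^{n+1}f_{n,v}(x)$ one obtains $B_n'(d)=-le^{rld-r(R-p)}(\mu/v)^{n+1}\overline{F}_{n,v}(d)$. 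Hence
\begin{equation*}
(G_n+B_n)'(d)=l\bigl[\overline{F}_{n,\mu}(d)-e^{rld-r(R-p)}(\mu/v)^{n+1}\overline{F}_{n,v}(d)\bigr].
\end{equation*}

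Second, I would multiply by $\lambda q(n;n_0)$ and sum over $n=0,\ldots,n_0-1$. The first block is by definition $\lambda\overline{F}_\mu(d)$, the sojourn-time survival in an $M/M/1/n_0$ system with service rate $\mu$. In the second block, extracting $(\mu/v)^{n+1}(\lambda/\mu)^n=(\mu/v)(\lambda/v)^n$ turns the weights into a geometric sequence in $\lambda/v$; dividing and multiplying by $\sum_{k=0}^{n_0}(\lambda/v)^k$ produces the analogous sojourn survival $\overline{F}_v(d)$ of an auxiliary $M/M/1/n_0$ system with service rate $v$, while the remaining factor simplifies to exactly the constant $\beta$ of the statement once the two geometric sums $(1-\rho^{n_0+1})/(1-\rho)$ and $(1-(\lambda/v)^{n_0+1})/(1-\lambda/v)$ are written out. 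The outcome is the compact factorisation $Z_c'(d)=\lambda l e^{rld}\overline{F}_v(d)\,a_c(d)$, so $Z_c'$ and $a_c$ share sign.

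Third, and this is the main obstacle, I would prove that $a_c$ is strictly decreasing on $[0,\infty)$. Since $\beta e^{-r(R-p)}$ is constant in $d$, it suffices to show that $h(d)=\overline{F}_\mu(d)e^{-rld}/\overline{F}_v(d)$ is strictly decreasing. Its logarithmic derivative equals $\eta_v(d)-\eta_\mu(d)-rl$, where $\eta_\mu$ and $\eta_v$ are the hazard rates of the two mixtures of $\Gamma(n+1,\cdot)$ survivals. The required inequality $\eta_v(d)\leq\eta_\mu(d)+rl$ has to be extracted from the common polynomial-times-exponential structure of the $M/M/1/n_0$ densities (each is $e^{-\lambda d}$ times a polynomial of degree at most $n_0-1$) together with $\mu-v=r(c-l)$. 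This step is delicate because hazard-rate ordering of the component $\Gamma(n+1,\mu)$ and $\Gamma(n+1,v)$ laws is not in general preserved under mixing with distinct weights, so the argument will proceed by writing $h(d)$ explicitly as an exponential times a rational function in $d$ and verifying decrease by direct computation rather than by invoking any abstract stochastic-order closure property.

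Finally, with $a_c$ strictly decreasing, the sign of $Z_c'$ on the feasible interval is governed entirely by the endpoint values. If $a_c(\tilde{d}^{P}_{n_0})\leq 0$, then $Z_c$ is nonincreasing throughout, so the supremum is approached as $d\downarrow\tilde{d}^{P}_{n_0}$. This left endpoint is excluded from the feasible interval when $n_0<\overline{n}$, leaving only $\varepsilon$-optimal quotes, while for $n_0=\overline{n}$ the feasible interval is $[0,\tilde{d}^{P}_{\overline{n}-1}]$ by Lemma~\ref{optsocc} and the supremum is attained at $d^{S_c}_{\overline{n}}=0$; this gives case~(i). If $a_c(\tilde{d}^{P}_{n_0-1})\geq 0$, then $Z_c$ is nondecreasing throughout and the maximum is attained at the right endpoint $\tilde{d}^{P}_{n_0-1}<\infty$, which is case~(ii). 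Otherwise $a_c(\tilde{d}^{P}_{n_0})>0>a_c(\tilde{d}^{P}_{n_0-1})$, so by strict monotonicity and the intermediate value theorem $a_c$ has a unique zero in the open interval, which is the unique interior maximiser and yields case~(iii).
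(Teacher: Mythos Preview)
Your proposal is correct and follows essentially the same route as the paper: compute $(G_n+B_n)'(d)$, aggregate over $n$ to obtain the factorisation $S_c'(d)=\lambda l\,e^{rld}\overline{F}_v(d)\,a_c(d)$, establish that $a_c$ is decreasing, and then read off the three cases from the signs of $a_c$ at the endpoints of the feasible interval from Lemma~\ref{optsocc}. The only substantive difference is your third step. You correctly flag that hazard-rate ordering is not preserved under mixing with distinct weights, and you therefore propose to write $h(d)=\overline{F}_\mu(d)e^{-rld}/\overline{F}_v(d)$ out explicitly and verify decrease by direct computation. The paper avoids this work by a cleaner decomposition: it shows the stronger inequality $\eta_v(d)\le\eta_\mu(d)$ directly, using that the $M/M/1/n_0$ sojourn density has the form $f_\mu(x)=c_\mu\,e^{-(\mu-\lambda)x}g(\lambda x)$ with $g$ independent of the service rate, so that
\[
\eta_\mu(d)=\frac{g(\lambda d)}{\displaystyle\int_d^\infty e^{-(\mu-\lambda)(x-d)}g(\lambda x)\,dx}
\]
is increasing in $\mu$ because only the denominator depends on $\mu$ and does so monotonically. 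This gives $\overline{F}_\mu/\overline{F}_v$ decreasing (Lemma~\ref{lemmaforunimodalsingle}), and multiplying by the decreasing factor $e^{-rld}$ then yields monotonicity of $a_c$ without any appeal to closure of hazard-rate order under mixing. Your direct-computation plan would reach the same conclusion, but the paper's factorisation is shorter and sidesteps the concern you raise.
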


Proposition \ref{unimodalc} shows that there are values of $n_0$ under which there is no optimal single quote. However in the next Lemma we show that such balking thresholds cannot be optimal at the first stage optimization problem \eqref{social_maxc}. Therefore once the optimal balking threshold is determined, there is always an optimal lead-time quote that can enforce it.

\begin{lemma}\label{noepsilon}
Any $n_0 \in \left\lbrace \underline{n}, \ldots ,\overline{n} \right\rbrace$, that results to $\epsilon$-optimal lead-time quotes, cannot be an optimal balking threshold.
\end{lemma}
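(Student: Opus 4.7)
The plan is to prove Lemma \ref{noepsilon} by contradiction: I would suppose some $n_0 \in \{\underline{n},\ldots,\overline{n}\}$ yields $\epsilon$-optimal quotes and is also an optimal balking threshold, and then derive a contradiction by exhibiting a strictly better choice. By Proposition \ref{unimodalc} this assumption forces case (i) with $n_0 \in \{\underline{n},\ldots,\overline{n}-1\}$ (for $n_0=\overline{n}$ the sup is attained at $d=0$), and I would use that on the open-left interval $(\tilde{d}^{P}_{n_0},\tilde{d}^{P}_{n_0-1}]$ the threshold-$n_0$ objective $f(d)=\lambda\sum_{n=0}^{n_0-1} q(n;n_0)(G_n+B_n)(d)$ is non-increasing near $\tilde{d}^{P}_{n_0}$, so that $Z_c(n_0)=\lim_{d\to (\tilde{d}^{P}_{n_0})^+} f(d)=f(\tilde{d}^{P}_{n_0})$ by continuity, with $B_{n_0}(\tilde{d}^{P}_{n_0})=0$ at this limiting quote.

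Next I would promote the quote $d=\tilde{d}^{P}_{n_0}$ to a feasible candidate for the larger threshold $n_0+1$. By Lemma \ref{dn_monotonicity} we have $\tilde{d}^{P}_{n_0+1}<\tilde{d}^{P}_{n_0}$, so $B_{n_0+1}(\tilde{d}^{P}_{n_0})<0$ while $B_{n_0}(\tilde{d}^{P}_{n_0})=0\ge 0$; thus $\tilde{d}^{P}_{n_0}$ is the right endpoint of the feasible interval $(\tilde{d}^{P}_{n_0+1},\tilde{d}^{P}_{n_0}]$ for threshold $n_0+1$, and hence $Z_c(n_0+1)\ge g(\tilde{d}^{P}_{n_0})$ where $g(d)=\lambda\sum_{n=0}^{n_0} q(n;n_0+1)(G_n+B_n)(d)$. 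Because $B_{n_0}(\tilde{d}^{P}_{n_0})=0$, the new state-$n_0$ term reduces to the pure provider profit $\lambda q(n_0;n_0+1)\,G_{n_0}(\tilde{d}^{P}_{n_0})$.

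Setting $C_0=\sum_{k=0}^{n_0}\rho^k$ and $C_1=C_0+\rho^{n_0+1}$, a direct manipulation gives
\[
g(\tilde{d}^{P}_{n_0})-f(\tilde{d}^{P}_{n_0})=\frac{\lambda\rho^{n_0}}{C_0\,C_1}\left[C_0\,G_{n_0}(\tilde{d}^{P}_{n_0})-\rho\sum_{n=0}^{n_0-1}\rho^n(G_n+B_n)(\tilde{d}^{P}_{n_0})\right].
\]
The crux is to prove the bracket is strictly positive; once this is done, $Z_c(n_0+1)>Z_c(n_0)$, contradicting optimality. The main obstacle is that case (i) only supplies the derivative inequality $f'(\tilde{d}^{P}_{n_0})\le 0$ (equivalent to $a_c^{(n_0)}(\tilde{d}^{P}_{n_0})\le 0$ via Proposition \ref{unimodalc}), whereas the bracket is a value-level statement. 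Bridging this gap will likely require combining the closed-form expressions for $G_n(d)=p-lL_n(d)$ and $B_n(d)$ with the indifference equation $B_{n_0}(\tilde{d}^{P}_{n_0})=0$, which links the incomplete-gamma expressions appearing in both $f'(\tilde{d}^{P}_{n_0})$ and the summands of the bracket. As a fallback, I would iterate the comparison along $n_0,n_0+1,\ldots$ until reaching a threshold in case (ii), (iii), or $n_0=\overline{n}$ where attainment is automatic, so that the accumulated inequality $Z_c(n_0)<Z_c(n_0')$ still contradicts optimality of the original $n_0$.
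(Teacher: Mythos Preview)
Your plan is exactly the paper's argument. The paper also observes that in the $\epsilon$-optimal case the unattained supremum at threshold $n_0$ equals the left-endpoint limit $S_c(n_0,\tilde d^{P}_{n_0})$, then switches to the adjacent threshold where the quote $\tilde d^{P}_{n_0}$ is feasible and invokes a monotonicity of $S_c(\cdot,d)$ in the threshold to conclude. You correctly move to $n_0+1$; the paper's text writes $n_0-1$, but since $\tilde d^{P}_{n_0}\notin(\tilde d^{P}_{n_0-1},\tilde d^{P}_{n_0-2}]$ while $\tilde d^{P}_{n_0}\in(\tilde d^{P}_{n_0+1},\tilde d^{P}_{n_0}]$, this is evidently a slip for $n_0+1$.

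The step you single out as the crux --- positivity of the bracket, equivalently $S_c(n_0+1,\tilde d^{P}_{n_0})\ge S_c(n_0,\tilde d^{P}_{n_0})$ --- is precisely what the paper asserts as ``$Z_{n_0}(d)$ is decreasing in $n_0$'' (read: the needed monotonicity) without supplying a proof. So your proposal and the paper's proof coincide, including at the point you identify as incomplete. Note that your fallback iteration does not circumvent the difficulty: each step $m\to m+1$ requires the same value-level comparison $S_c(m+1,\tilde d^{P}_{m})\ge S_c(m,\tilde d^{P}_{m})$, which is not implied by the derivative condition $a_c(\tilde d^{P}_{m})\le 0$ alone. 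If you want to close the gap beyond what the paper does, you will indeed need an argument that links the indifference condition $B_{n_0}(\tilde d^{P}_{n_0})=0$ (so the added state-$n_0$ term is the pure provider profit $G_{n_0}(\tilde d^{P}_{n_0})$) with the weighted average that appears in the bracket.
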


For the first-stage problem from Proposition \ref{unimodalc} we have:
\begin{eqnarray*}
\displaystyle Z_{c}(n_0)=S_{c}(n_0,d^{S_c}_{n_0})= \lambda \sum_{n=0}^{n_0-1} q(n;n_0) (G_n+B_n)(d^{S_c}_{n_0})
\end{eqnarray*}

Following the same approach as in the provider's problem in Subsection \ref{singleprovider} we can find the optimal balking threshold $n_{S_c}$ that maximizes $Z_c(n_0)$ through an exhaustive search in the finite interval of balking thresholds which do not results in $\epsilon$-optimal quotes.

Note that there exists at least one such value of $n_0$, namely $n_0=\overline{n}$, because from Lemma \ref{optsocc} the range of feasible quotes in this case is the closed interval $[0, d^{P_c}_{\overline{n}}]$, thus an optimal value exists.

We can now summarize the results in the following theorem:
\begin{theorem}
The problem of social benefit maximization in the class of single lead-time quotes policies has an optimal solution $(d^{S_c}_{n_S},n_{S_c})$ and the maximum social benefit is equal to:
\begin{equation*}
S_{c}^{*}=\lambda \sum_{n=0}^{n_{S_c}-1} q(n;n_{S_c}) (G_n(d^{S_c}_{n_{S_c}})+B_n(d^{S_c}_{n_{S_c}})).
\end{equation*}
\end{theorem}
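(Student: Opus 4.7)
The plan is to assemble the theorem from the three preceding results (Proposition \ref{unimodalc}, Lemma \ref{noepsilon}, and Lemma \ref{optsocc}) by handling the two-stage problem in reverse. First I would fix an arbitrary $n_0 \in \{\underline{n}, \ldots, \overline{n}\}$ and apply Proposition \ref{unimodalc}. Cases (ii) and (iii) give a unique maximizer $d^{S_c}_{n_0} \in (\tilde{d}^{P}_{n_0}, \tilde{d}^{P}_{n_0-1}]$, so the supremum in \eqref{firstsocproc} is attained and $Z_c(n_0) = \lambda \sum_{n=0}^{n_0-1} q(n;n_0)(G_n + B_n)(d^{S_c}_{n_0})$. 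Case (i) either produces the boundary attainer $d^{S_c}_{\overline{n}} = 0$ (when $n_0 = \overline{n}$, whose feasible set is the closed interval $[0, d^{P_c}_{\overline{n}}]$ by Lemma \ref{optsocc}), or yields only $\epsilon$-optimal quotes. Call $\mathcal{N}$ the subset of $\{\underline{n}, \ldots, \overline{n}\}$ for which an attained optimum exists; we have just argued $\overline{n} \in \mathcal{N}$, so $\mathcal{N}$ is nonempty.

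Next I would invoke Lemma \ref{noepsilon}: any $n_0 \notin \mathcal{N}$ cannot be the optimal balking threshold for problem \eqref{social_maxc}. Consequently the outer maximization in \eqref{social_maxc} reduces to
\begin{equation*}
S_c^* = \max_{n_0 \in \mathcal{N}} Z_c(n_0),
\end{equation*}
which is a maximum of finitely many real numbers and is therefore attained by some $n_{S_c} \in \mathcal{N}$. Setting $d^{S_c}_{n_{S_c}}$ to be the optimal single quote guaranteed by Proposition \ref{unimodalc} for that threshold produces the claimed optimal pair $(d^{S_c}_{n_{S_c}}, n_{S_c})$. Substituting into the definition of $S_c(\cdot)$ yields the displayed value of $S_c^*$.

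The only subtlety — and what I view as the main obstacle — is ensuring that restricting the outer search to $\mathcal{N}$ is legitimate, i.e., that an $n_0$ with only $\epsilon$-optimal single quotes cannot secretly attain or tie the overall supremum. This is precisely the content of Lemma \ref{noepsilon}, so once that lemma is in hand the theorem is a bookkeeping exercise; the existence of $n_0 = \overline{n}$ in $\mathcal{N}$ then guarantees that the maximum is not vacuous. The monotonicity statement $d^{S_c}_{n_0}$ decreasing in $n_0$ from Lemma \ref{optsocc}(ii) is not needed for existence but can be used as a sanity check that the candidate quotes across different thresholds are mutually consistent.
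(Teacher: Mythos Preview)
Your proposal is correct and mirrors the paper's own argument: the paper also treats this theorem as a summary, using Proposition~\ref{unimodalc} to obtain $d^{S_c}_{n_0}$ whenever it exists, Lemma~\ref{noepsilon} to discard thresholds admitting only $\epsilon$-optimal quotes, and the observation that $n_0=\overline{n}$ always lies in your set $\mathcal{N}$ so the finite outer search is nonempty. The only difference is cosmetic---the paper phrases the outer step as an ``exhaustive search'' rather than explicitly defining $\mathcal{N}$---so your write-up is essentially the paper's proof made explicit.
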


\section{Effects of Risk Aversion}\label{effectsofriskaversion}
In this section we derive some results regarding the effect of risk aversion on the profit maximization and social optimization problems. Specifically, we explore the effect on: i)balking thresholds and ii)optimal lead-time quotation and profits.

It can be easily shown that both  the lower and upper bound of the balking thresholds are decreasing in $r$.  Fow values of $r$ close to zero we derive the well-known optimal thresholds for the risk-neutral case of \cite{naor1969} without and with compensation respectively for a fixed entrance fee $p$. Specifically:
$\underline{n}(r)<\underline{n}(0)=\floor*{\frac{\mu (R-p)}{c}}$ and $\overline{n}(r)<\overline{n}(0)=\floor*{\frac{\mu (R-p)}{(c-l)}}.$
We can also see that the difference $\overline{n}(r)-\underline{n}(r)$ is increasing in $l$ for any $r$, which means that one effect of the compensation is to increase the range of feasible balking thresholds.
Of course when $l=0$, we have $\underline{n}(r)=\overline{n}(r)$.

Regarding the effect on the optimal lead-time and profits, $\tilde{d}^{P}_{n}(r)$ is decreasing in $r$ since $B_n(d)$ is also decreasing in $r$. The behavior of $\tilde{d}^{S}_{n}(r)$ is not clear analytically, although the numerical results in the next section imply that it is also decreasing. The optimal profits $P^*(r), P^*_{c}(r),S^*(r), S^*_{c}(r)$ are also decreasing in $r$ since the corresponding objective functions and the optimal lead-times are decreasing in $r$.

We finish with a note about the profit and social maximization problems for the risk-neutral case.
The objective function in the profit maximization problem for the provider remains the same as in \eqref{provider_max}:

\begin{equation*}
P(d_n)=\lambda\left( \frac{1-\rho}{1-\rho^{n_0+1}} \right)  \sum_{n=0}^{n_0-1} \rho^n (p-lL_n(d_n)),
\end{equation*}
whereas the social optimization problem  in \eqref{social_max} it is simplified to:

\begin{equation*}
S(d_n)=\lambda\left( \frac{1-\rho}{1-\rho^{n_0+1}} \right) \sum_{n=0}^{n_0-1} \rho^n \left( R-c\frac{n_0+1}{\mu} \right), 
\end{equation*}
which means that the dependence on the quotation policy is only through $n_0$, i.e., the effect on the customer balking threshold. In both cases the necessary condition for joining is $R-p-c\frac{n+1}{\mu}+lL_n(d_n)\geq 0$. 

\section{Computational Experiments}\label{Numericalsection}
In this section we obtain further insights on the four problems analyzed above through computational experiments. First, we observe the sensitivity of the optimal policy and profits with respect to other key parameters of the problem, i.e.,  the entrance fee and the compensation rate. We then perform several experiments to quantify the effect of risk aversion on the optimal policies. Finally, we investigate how the degree of risk aversion affects the minimum required capacity that will allow customers to join the system. We make comparisons between dynamic and single lead-time quotation strategies and investigate the degree of flexibility that the compensation offers.

The base case of parameter values in the following computational experiments is $R=15, c=8, \lambda=10, \mu=12$,  $p=10$, $l=3$, $r=0.5$.

In the first set of experiments we explore the sensitivity of each of the four problems with respect to the entrance fee and the compensation rate. In each case we let one of the parameters vary. In particular, we examine the effect on the optimal provider and social optimizer profits, the range of balking thresholds as well as the optimal values, and the difference between the dynamic and the single lead-time quotation policies. 

Table \ref{ptable} refers to the case where the entrance fee varies from $5$ to $14$. 
First, as $p$ increases, both the lower and upper bound of the balking threshold decrease, and the same holds for the optimal one. When no compensation is offered (Table \ref{withoutcomptable}), we find that the optimal entrance fees are $p=12$ and $p=11$ for the provider and the social optimizer respectively. On the other hand, the option of the compensation allows both the provider and the social optimizer to increase the price to $p=13$ and $p=12$, and at the same time to increase the maximum size of the queue. This results in a significant increase in profits for both the provider and social optimizer. The provider offers higher prices than the social optimizer. Therefore, the option of a compensation lets both the provider and the social optimizer increase the entrance fee and the profits. 

Concerning the optimal thresholds, we observe that for a fixed entrance fee $p$ the provider sets the same or higher values than the social optimizer, which is in the opposite direction with the result of \citep{naor1969}, where the socially optimal threshold is higher. However, when in addition to the lead-time, the entrance fee is also controllable, the thresholds are ordered in the same way as in Naor's model. Besides, the presence of the compensation results in higher queues for the provider and social optimizer. Comparing the two lead-time policies, the extra profit achieved by the dynamic quotation is generally very low.

\begin{table}[]
\centering
\begin{tabular}{|c|c|c|c|c|c|c|}
\hline                                                                                                                                                                              & \multicolumn{3}{c|}{$r=0.5$}                                                                                                                                                      \\   \hline
             
\multirow{-1}{*}{\textbf{$p$}}           
& $P^*$      & $n_0$                     
& $S^*$      \\ \hline

5     & 48.96 &12 & 66.54  \\ \hline

6       & 58.48 &11 & 75.31  \\ \hline

7       & 67.30 & 9 & 83.71  \\ \hline

8       & 76.15 & 8 & 91.47  \\ \hline

9  & 84.54 & 7 & 98.44   \\ \hline

10  & 92.25 & 6 & 104.29 \\ \hline

11  & 95.21 &\cellcolor{cyan}4 & \cellcolor{cyan}106.00  \\ \hline

12  & \cellcolor{yellow}97.64 & \cellcolor{yellow}3 & 105.70   \\ \hline

13  & 94.28 & 2 & 98.96   \\ \hline

14  & 76.36 & 1 & 77.34   \\ \hline
\end{tabular}
  \caption{Optimal thresholds and profits without compensation as a function of $p$.}
  \label{withoutcomptable}
\end{table}

\begin{table}[]
\centering
\begin{tabular}{|c|c|c|c|c|c|c|c|c|c|}
\hline
                       &                               & \multicolumn{4}{c|}{Profit Maximization}                                                                                                                                                      & \multicolumn{4}{c|}{Social Optimization}                                                                                                                                                      \\  \hhline{|>{\arrayrulecolor{white}}->{\arrayrulecolor{black}}|>{\arrayrulecolor{white}}->{\arrayrulecolor{black}}|--------}
                       &                               & \multicolumn{2}{c|}{\cellcolor[HTML]{FFFFC7}Dynamic}                                          & \multicolumn{2}{c|}{\cellcolor[HTML]{ECF4FF}Single}                                         & \multicolumn{2}{c|}{\cellcolor[HTML]{FFFFC7}Dynamic}                                          & \multicolumn{2}{c|}{\cellcolor[HTML]{ECF4FF}Single}                                         \\  \hhline{|>{\arrayrulecolor{white}}->{\arrayrulecolor{black}}|>{\arrayrulecolor{white}}->{\arrayrulecolor{black}}|--------} 
                       
\multirow{-3}{*}{$p$}    & \multirow{-3}{*}{$[\underline{n},\overline{n}]$} & \cellcolor[HTML]{FFFFC7}$n_P$                     & \cellcolor[HTML]{FFFFC7}$P^*$                     & \cellcolor[HTML]{ECF4FF}$n_{P_c}$                     & \cellcolor[HTML]{ECF4FF}$P^{*}_{c}$                     & \cellcolor[HTML]{FFFFC7}$n_S$                     & \cellcolor[HTML]{FFFFC7}$S^*$                     & \cellcolor[HTML]{ECF4FF}$n_{S_c}$                     & \cellcolor[HTML]{ECF4FF}$S^{*}_{c}$                     \\ \hline

5 & [12,21]         & {\cellcolor[HTML]{FFFFC7}15} & {\cellcolor[HTML]{FFFFC7}49.24} & {\cellcolor[HTML]{ECF4FF}13} & {\cellcolor[HTML]{ECF4FF}49.12} & {\cellcolor[HTML]{FFFFC7}12} & {\cellcolor[HTML]{FFFFC7}66.79} & {\cellcolor[HTML]{ECF4FF}13} & {\cellcolor[HTML]{ECF4FF}66.71} \\ \hline

6 & [11,19]         & {\cellcolor[HTML]{FFFFC7}14} & {\cellcolor[HTML]{FFFFC7}58.86} & {\cellcolor[HTML]{ECF4FF}12} & {\cellcolor[HTML]{ECF4FF}58.68} & {\cellcolor[HTML]{FFFFC7}11} & {\cellcolor[HTML]{FFFFC7}75.64} & {\cellcolor[HTML]{ECF4FF}12} & {\cellcolor[HTML]{ECF4FF}75.58} \\ \hline

7 &[9,17]         & {\cellcolor[HTML]{FFFFC7}12} & {\cellcolor[HTML]{FFFFC7}68.32} & {\cellcolor[HTML]{ECF4FF}11} & {\cellcolor[HTML]{ECF4FF}68.04} & {\cellcolor[HTML]{FFFFC7}10} & {\cellcolor[HTML]{FFFFC7}84.10} & {\cellcolor[HTML]{ECF4FF}11} & {\cellcolor[HTML]{ECF4FF}84.07} \\ \hline

8 & [8,14]         & {\cellcolor[HTML]{FFFFC7}11} & {\cellcolor[HTML]{FFFFC7}77.55} & {\cellcolor[HTML]{ECF4FF}10} & {\cellcolor[HTML]{ECF4FF}77.11} & {\cellcolor[HTML]{FFFFC7}10} & {\cellcolor[HTML]{FFFFC7}92.07} & {\cellcolor[HTML]{ECF4FF}10} & {\cellcolor[HTML]{ECF4FF}92.05} \\ \hline

9 & [7,12]         & {\cellcolor[HTML]{FFFFC7}10} & {\cellcolor[HTML]{FFFFC7}86.47} & {\cellcolor[HTML]{ECF4FF}9} & {\cellcolor[HTML]{ECF4FF}85.73} & {\cellcolor[HTML]{FFFFC7}9} & {\cellcolor[HTML]{FFFFC7}99.40} & {\cellcolor[HTML]{ECF4FF}9} & {\cellcolor[HTML]{ECF4FF}99.38} \\ \hline

10 & [6,10]     & {\cellcolor[HTML]{FFFFC7}9} & {\cellcolor[HTML]{FFFFC7}94.91} & {\cellcolor[HTML]{ECF4FF}8} & {\cellcolor[HTML]{ECF4FF}93.66} & {\cellcolor[HTML]{FFFFC7}8} & {\cellcolor[HTML]{FFFFC7}105.86} & {\cellcolor[HTML]{ECF4FF}8} & {\cellcolor[HTML]{ECF4FF}105.80} \\ \hline

11 & [4,8]         & {\cellcolor[HTML]{FFFFC7}8} & {\cellcolor[HTML]{FFFFC7}102.68} & {\cellcolor[HTML]{ECF4FF}6} & {\cellcolor[HTML]{ECF4FF}100.64} & {\cellcolor[HTML]{FFFFC7}7} & {\cellcolor[HTML]{FFFFC7}111.15} & {\cellcolor[HTML]{ECF4FF}7} & {\cellcolor[HTML]{ECF4FF}110.91} \\ \hline

12 & [3,6]         & {\cellcolor[HTML]{FFFFC7}6} & {\cellcolor[HTML]{FFFFC7}108.74} & {\cellcolor[HTML]{ECF4FF}5} & {\cellcolor[HTML]{ECF4FF}106.06} & {\cellcolor[HTML]{FFFFC7}6} & \cellcolor{yellow}114.90 & {\cellcolor[HTML]{ECF4FF}6} & \cellcolor{cyan}114.12 \\ \hline

13 & [2,4]         & {\cellcolor[HTML]{FFFFC7}4} &\cellcolor{yellow}110.73 & {\cellcolor[HTML]{ECF4FF}4} & \cellcolor{cyan}108.64 & {\cellcolor[HTML]{FFFFC7}4} & {\cellcolor[HTML]{FFFFC7}114.31} & {\cellcolor[HTML]{ECF4FF}4} & {\cellcolor[HTML]{ECF4FF}114.03} \\ \hline

14 & [1,2]         & {\cellcolor[HTML]{FFFFC7}2} & {\cellcolor[HTML]{FFFFC7}100.10} & {\cellcolor[HTML]{ECF4FF}2} & {\cellcolor[HTML]{ECF4FF}99.33} & {\cellcolor[HTML]{FFFFC7}2} & {\cellcolor[HTML]{FFFFC7}101.04} & {\cellcolor[HTML]{ECF4FF}2} & {\cellcolor[HTML]{ECF4FF}101.01} \\ \hline
\end{tabular}
  \caption{Optimal thresholds and profits as a function of $p$.}
  \label{ptable}
\end{table}

In Figure \ref{pvariable}, we present the sequence of the optimal dynamic lead-time quotes and the optimal single quote. The left graph corresponds to the provider maximization problem and the right graph to the social optimization problem. The asterisks denote the optimal dynamic lead-time quotes, the crosses are the optimal single quotes, and the circle denotes that at the balking threshold the quote must be higher than the plotted value to discourage the incoming customers. Note that for the provider maximization problem, the optimal dynamic lead-time quotes for states $n<\underline{n}$ are $d=\infty$, while for the social optimization problem they are finite.
In general, the service provider offers the highest possible lead-time quotes to make customers indifferent, while the social optimizer keeps a balance between the provider's objective for higher profits and the customers' cost of delay. Furthermore, when the entrance fee is high, both the provider and social optimizer offer lower optimal dynamic and single lead-time quotes to balance the delay aversion of potential customers and entice them to enter the queue.

\begin{figure}
\includegraphics[width=7cm,height=7cm]{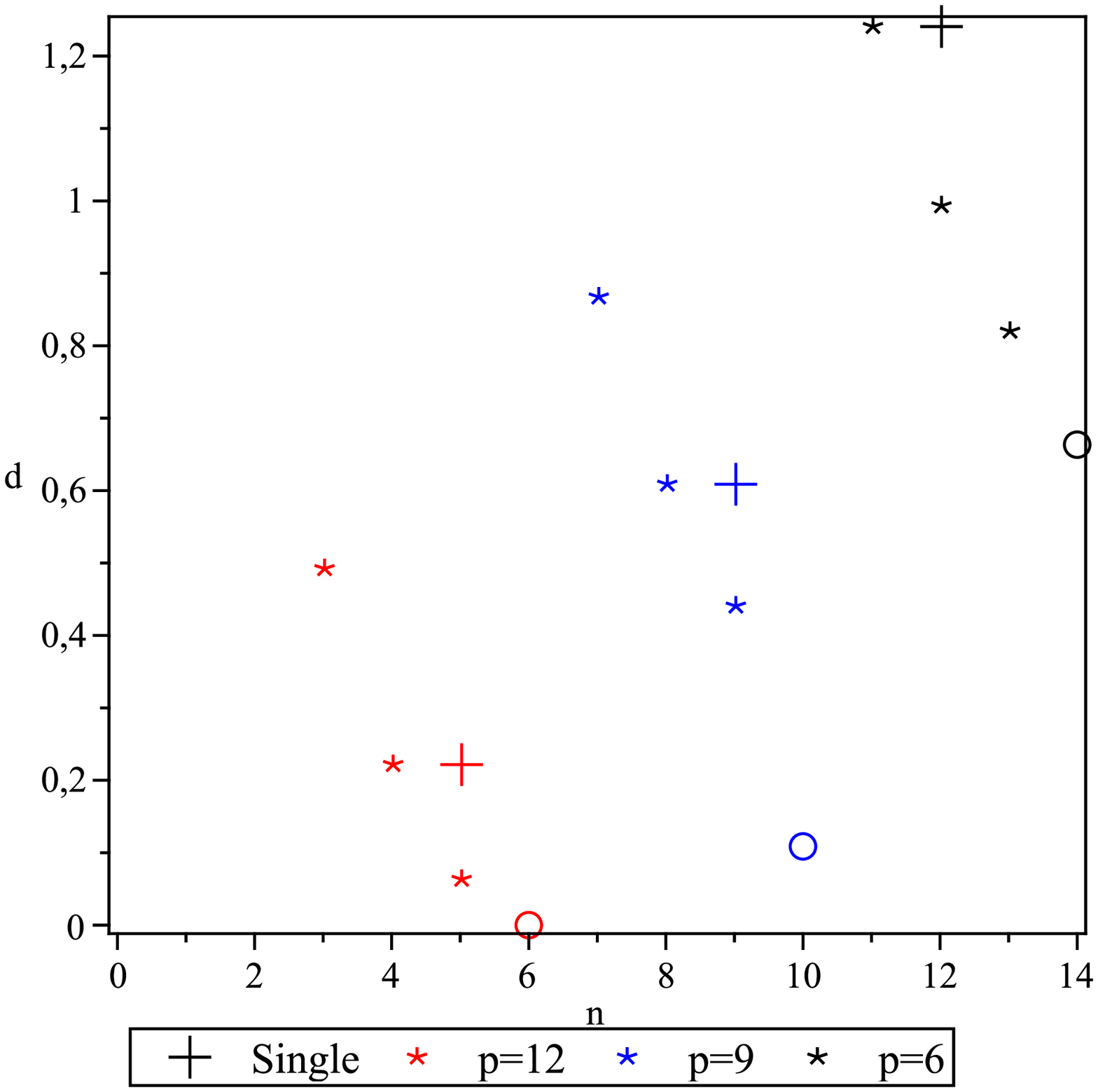}
\includegraphics[width=7cm,height=7cm]{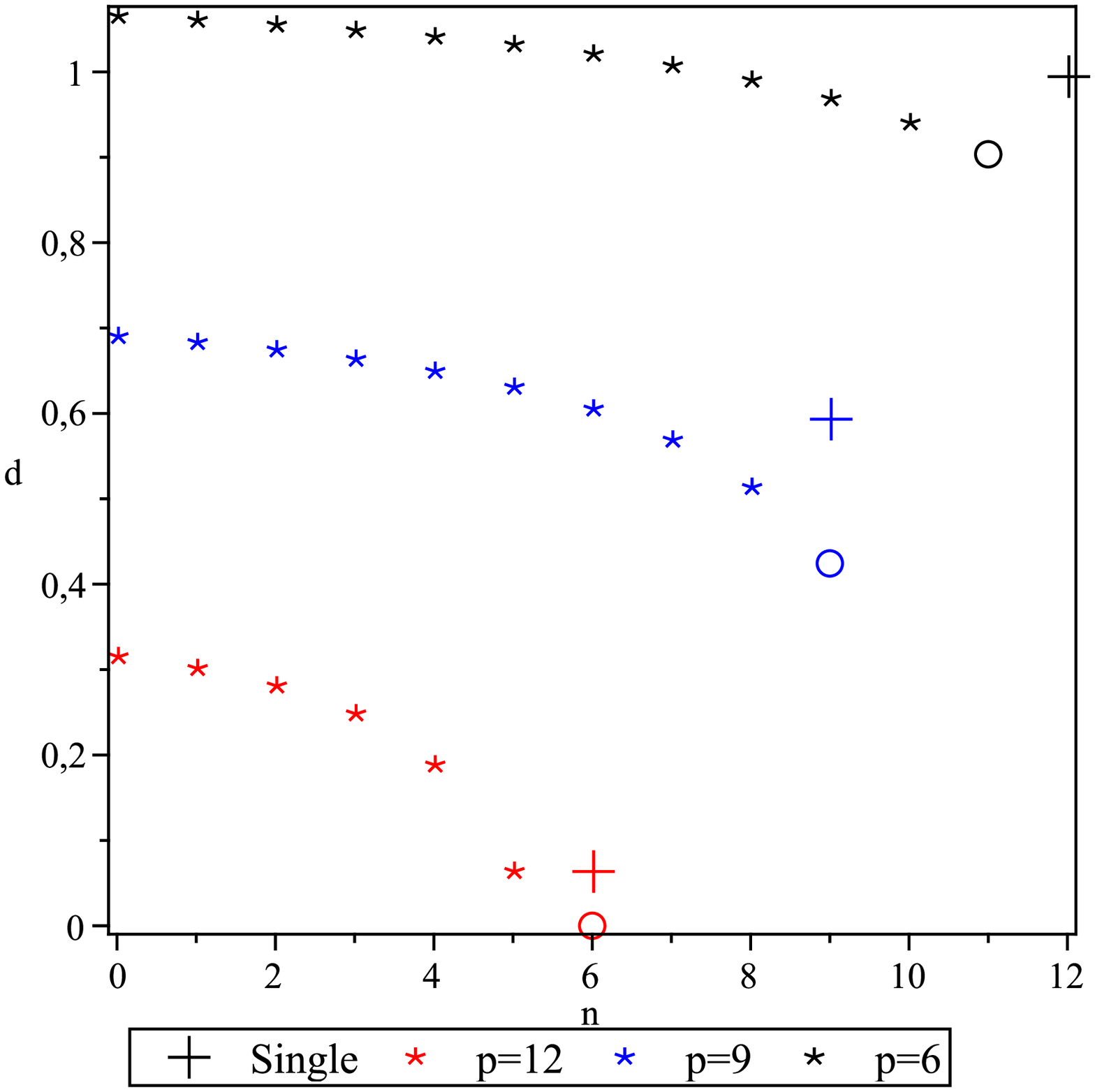}
\caption{Optimal single and dynamic lead-time quotes for profit maximization (left) and social optimization (right).}
\label{pvariable}
\end{figure}

Table \ref{ltable} refers to the case when the compensation rate varies from $0$ to full compensation. We first observe that as the compensation increases, the upper bound of the balking threshold is also increasing. The same holds for the optimal threshold. One interesting insight from this table is that both profits are increasing in $l$, which means that it is optimal both for the provider and the social optimizer to offer full compensation. On the other hand, the profit variation is very low in the entire range of $l$, from no to full compensation. Compared to the case of variable entrance fee, we can see that the main role of the compensation is to let the provider and the social optimizer maximize profits by offering sufficient lead-time quotes and higher entrance fees. Finally, as in the previous experiment, the optimal maximum size of the queue is usually longer under the provider's problem and that the choice of dynamic or single quotation policy does not really affect the optimal profits.

Figure \ref{lvariable} goes along similar lines with Figure \ref{pvariable}. The only difference is that the optimal lead-time quotes are increasing in $l$. This is expected, since a higher compensation allows the provider and the social optimizer to increase the lead-times without deterring customers.

\begin{table}[]
\centering
\begin{tabular}{|c|c|c|c|c|c|c|c|c|c|}
\hline
                       &                               & \multicolumn{4}{c|}{Profit Maximization}                                                                                                                                                      & \multicolumn{4}{c|}{Social Optimization}                                                                                                                                                      \\  \hhline{|>{\arrayrulecolor{white}}->{\arrayrulecolor{black}}|>{\arrayrulecolor{white}}->{\arrayrulecolor{black}}|--------}
                       &                               & \multicolumn{2}{c|}{\cellcolor[HTML]{FFFFC7}Dynamic}                                          & \multicolumn{2}{c|}{\cellcolor[HTML]{ECF4FF}Single}                                         & \multicolumn{2}{c|}{\cellcolor[HTML]{FFFFC7}Dynamic}                                          & \multicolumn{2}{c|}{\cellcolor[HTML]{ECF4FF}Single}                                         \\  \hhline{|>{\arrayrulecolor{white}}->{\arrayrulecolor{black}}|>{\arrayrulecolor{white}}->{\arrayrulecolor{black}}|--------} 
                       
\multirow{-3}{*}{$l$}    & \multirow{-3}{*}{$[\underline{n},\overline{n}]$} & \cellcolor[HTML]{FFFFC7}$n_P$                     & \cellcolor[HTML]{FFFFC7}$P^*$                     & \cellcolor[HTML]{ECF4FF}$n_{P_c}$                     & \cellcolor[HTML]{ECF4FF}$P^{*}_{c}$                     & \cellcolor[HTML]{FFFFC7}$n_S$                     & \cellcolor[HTML]{FFFFC7}$S^*$                     & \cellcolor[HTML]{ECF4FF}$n_{S_c}$                     & \cellcolor[HTML]{ECF4FF}$S^{*}_{c}$                     \\ \hline

0 & [6,6]         & {\cellcolor[HTML]{FFFFC7}6} & {\cellcolor[HTML]{FFFFC7}92.25} & {\cellcolor[HTML]{ECF4FF}6} & {\cellcolor[HTML]{ECF4FF}92.25} & {\cellcolor[HTML]{FFFFC7}6} & {\cellcolor[HTML]{FFFFC7}104.29} & {\cellcolor[HTML]{ECF4FF}6} & {\cellcolor[HTML]{ECF4FF}104.29} \\ \hline

2 & [6,8]         & {\cellcolor[HTML]{FFFFC7}8} & {\cellcolor[HTML]{FFFFC7}94.47} & {\cellcolor[HTML]{ECF4FF}7} & {\cellcolor[HTML]{ECF4FF}93.44} & {\cellcolor[HTML]{FFFFC7}8} & {\cellcolor[HTML]{FFFFC7}105.60} & {\cellcolor[HTML]{ECF4FF}7} & {\cellcolor[HTML]{ECF4FF}105.42} \\ \hline

4 & [6,13]         & {\cellcolor[HTML]{FFFFC7}10} & {\cellcolor[HTML]{FFFFC7}95.09} & {\cellcolor[HTML]{ECF4FF}8} & {\cellcolor[HTML]{ECF4FF}94.04} & {\cellcolor[HTML]{FFFFC7}8} & {\cellcolor[HTML]{FFFFC7}106.01} & {\cellcolor[HTML]{ECF4FF}8} & {\cellcolor[HTML]{ECF4FF}105.99} \\ \hline

6 & [6,28]         & {\cellcolor[HTML]{FFFFC7}10} & {\cellcolor[HTML]{FFFFC7}95.28} & {\cellcolor[HTML]{ECF4FF}9} & {\cellcolor[HTML]{ECF4FF}94.38} & {\cellcolor[HTML]{FFFFC7}9} & {\cellcolor[HTML]{FFFFC7}106.16} & {\cellcolor[HTML]{ECF4FF}9} & {\cellcolor[HTML]{ECF4FF}106.16} \\ \hline

8 & [6,$\infty$]         & {\cellcolor[HTML]{FFFFC7}10} & \cellcolor{yellow}95.32 & {\cellcolor[HTML]{ECF4FF}10} & \cellcolor{cyan}94.58 & {\cellcolor[HTML]{FFFFC7}9} & \cellcolor{yellow}106.21 & {\cellcolor[HTML]{ECF4FF}9} & \cellcolor{cyan}106.20 \\ \hline
\end{tabular}
  \caption{Optimal thresholds and profits as a function of $l$.}
  \label{ltable}
\end{table}

\begin{figure}
\includegraphics[width=7cm,height=7cm]{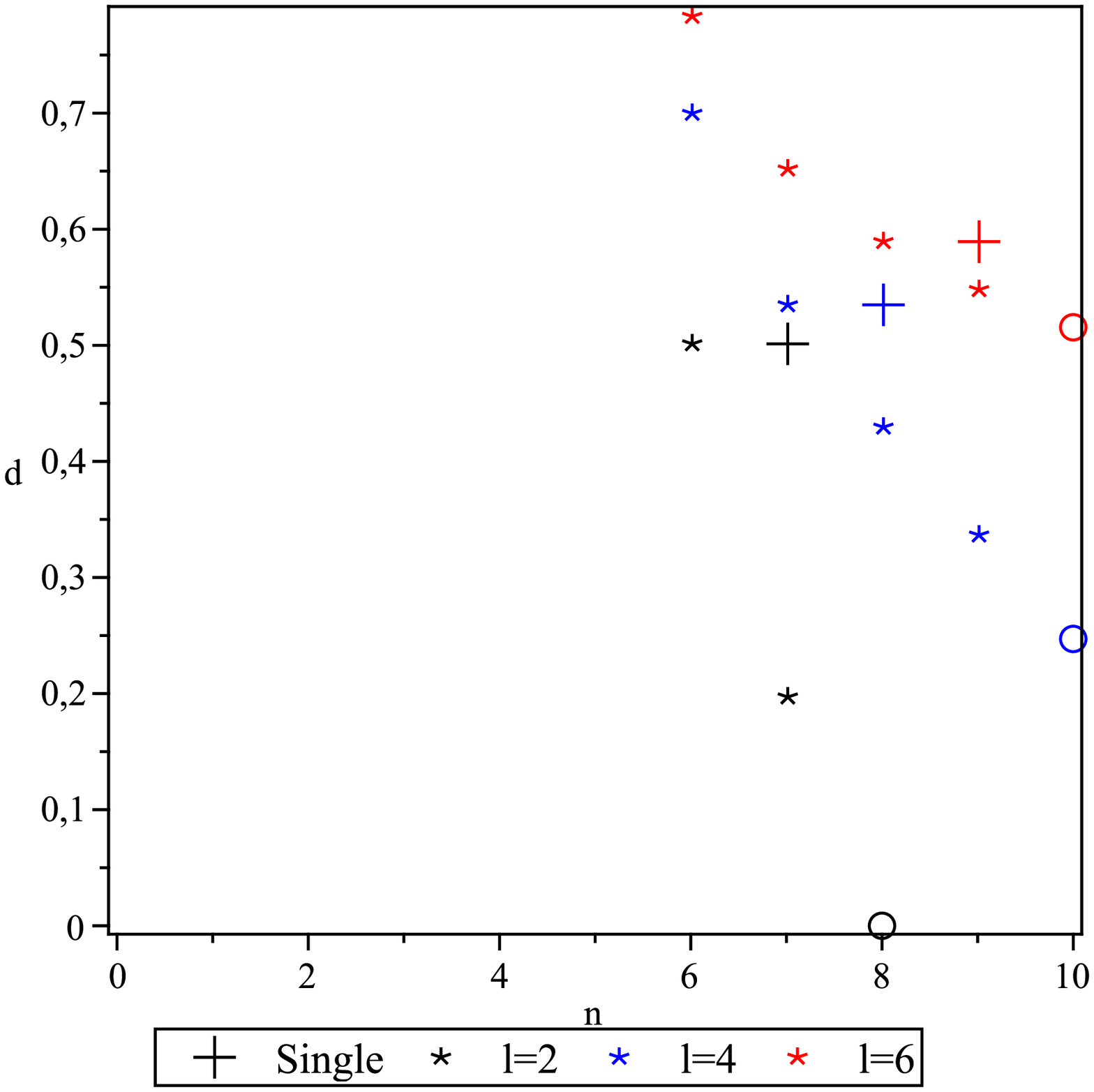}
\includegraphics[width=7cm,height=7cm]{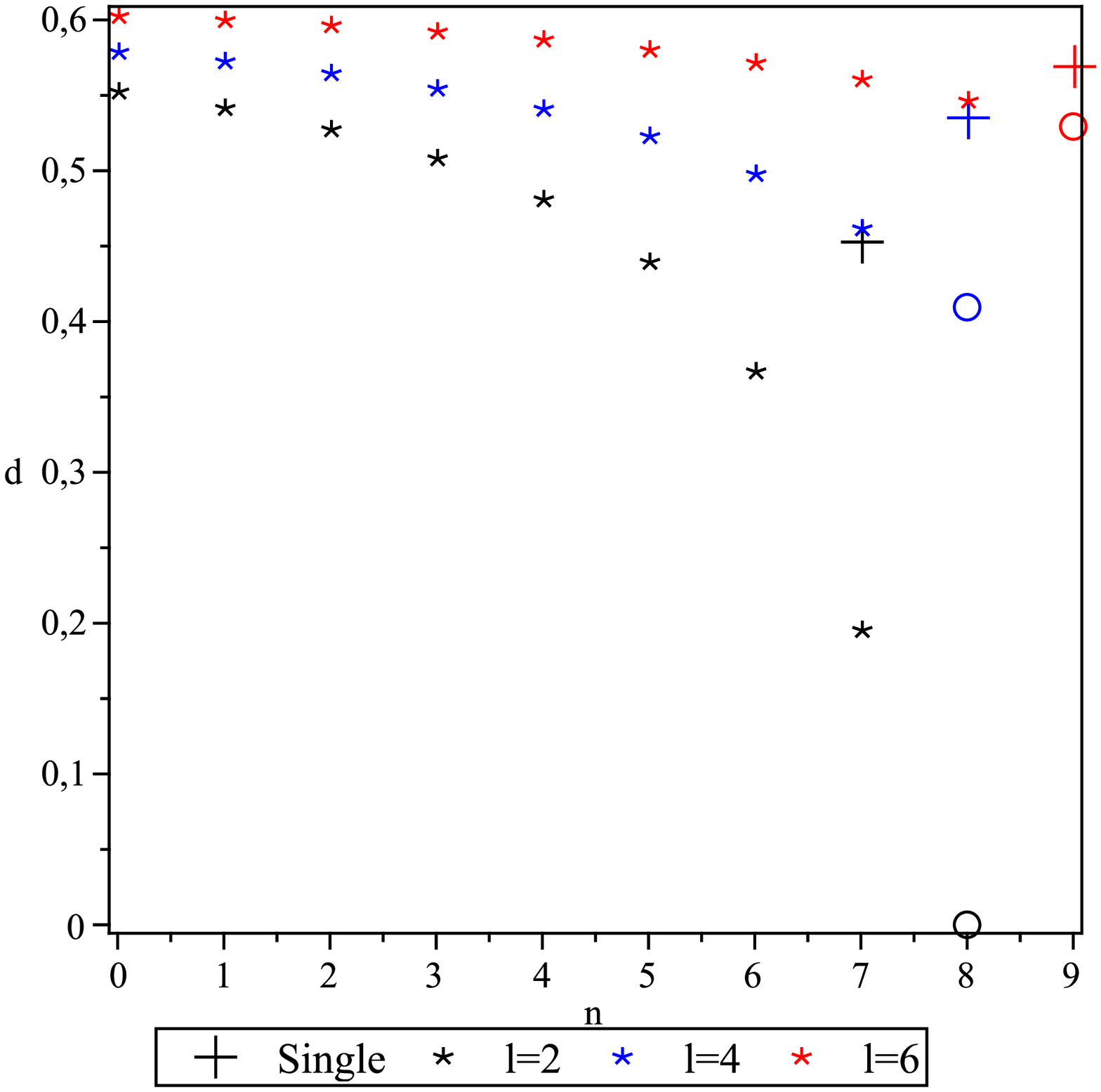}
\caption{Optimal single and dynamic lead-time quotes for profit maximization (left) and social optimization (right).}
\label{lvariable}
\end{figure}
%end of part1
In the second set of experiments, we investigate numerically the effect of risk aversion on the optimal policy and profits.
In Table \ref{rtable} we present the optimal dynamic and single lead-time quotes for $r=0$ and a high risk aversion coefficient $r=1.3$. The optimal balking threshold for the provider is reduced from $10$ to $8$ when $r=1.3$ under the dynamic policy (from $8$ to $6$ under the single policy), compared to the risk-neutral $r=0$. On the other hand, the social optimizer increases the threshold by one under the dynamic policy, and keeps it the same under single quote. This can be explained by the different incentives between the provider and the social optimizer. The latter offers lower quotes and generally leaves a positive benefit to the customers. From several experiments, we observe that the optimal threshold is decreasing in $r$ for the profit maximization problem. However, this does not hold necessarily under the social optimization. As in the cases of variable $p$ and $l$, the provider may prefer longer queues than the social optimizer, and the dynamic policy also induces longer queues compared to the single quote.

\begin{table}[]
\centering
\begin{tabular}{|c|c|c|c|c|c|c|c|c|}
\hline
                        & \multicolumn{4}{c|}{$r=0$}                                                                                                                                                      & \multicolumn{4}{c|}{$r=1.3$}                                                                                                                                                      \\   \hline

                                                    & \multicolumn{2}{c|}{\cellcolor[HTML]{FFFFC7}Profit Max}                                          & \multicolumn{2}{c|}{\cellcolor[HTML]{ECF4FF}Social Opt}                                         & \multicolumn{2}{c|}{\cellcolor[HTML]{FFFFC7}Profit Max}                                          & \multicolumn{2}{c|}{\cellcolor[HTML]{ECF4FF}Social Opt}                                         \\ \hline
                       
\multirow{-1}{*}{\textbf{$n$}}       & \cellcolor[HTML]{FFFFC7}$\tilde{d}^{P}_{n}$  & \cellcolor[HTML]{FFFFC7}$d^{P_c}_{n_0}$              & \cellcolor[HTML]{ECF4FF}$\tilde{d}^{S}_{n}$                     & \cellcolor[HTML]{ECF4FF}$d^{S_c}_{n_0}$                     & \cellcolor[HTML]{FFFFC7}$\tilde{d}^{P}_{n}$                     & \cellcolor[HTML]{FFFFC7}$d^{P_c}_{n_0}$                     & \cellcolor[HTML]{ECF4FF}$\tilde{d}^{S}_{n}$                     & \cellcolor[HTML]{ECF4FF}$d^{S_c}_{n_0}$ \\ \hline

0         & {\cellcolor[HTML]{FFFFC7}$\infty$} & {\cellcolor[HTML]{FFFFC7}0.62} & {\cellcolor[HTML]{ECF4FF}0.57} & {\cellcolor[HTML]{ECF4FF}0.63} & {\cellcolor[HTML]{FFFFC7}$\infty$} & {\cellcolor[HTML]{FFFFC7}0.46} & {\cellcolor[HTML]{ECF4FF}0.54} & {\cellcolor[HTML]{ECF4FF}0.26} \\ \hline

1         & {\cellcolor[HTML]{FFFFC7}$\infty$} & {\cellcolor[HTML]{FFFFC7}0.62}  & {\cellcolor[HTML]{ECF4FF}0.56} & {\cellcolor[HTML]{ECF4FF}0.63}  & {\cellcolor[HTML]{FFFFC7}$\infty$} & {\cellcolor[HTML]{FFFFC7}0.46} & {\cellcolor[HTML]{ECF4FF}0.53} & {\cellcolor[HTML]{ECF4FF}0.26} \\ \hline

2         & {\cellcolor[HTML]{FFFFC7}$\infty$} & {\cellcolor[HTML]{FFFFC7}0.62} & {\cellcolor[HTML]{ECF4FF}0.55} & {\cellcolor[HTML]{ECF4FF}0.63} & {\cellcolor[HTML]{FFFFC7}$\infty$} & {\cellcolor[HTML]{FFFFC7}0.46} & {\cellcolor[HTML]{ECF4FF}0.51} & {\cellcolor[HTML]{ECF4FF}0.26} \\ \hline

3       & {\cellcolor[HTML]{FFFFC7}$\infty$} & {\cellcolor[HTML]{FFFFC7}0.62} & {\cellcolor[HTML]{ECF4FF}0.54} & {\cellcolor[HTML]{ECF4FF}0.63}  & {\cellcolor[HTML]{FFFFC7}1.20} & {\cellcolor[HTML]{FFFFC7}0.46} & {\cellcolor[HTML]{ECF4FF}0.48} & {\cellcolor[HTML]{ECF4FF}0.26} \\ \hline

4       & {\cellcolor[HTML]{FFFFC7}$\infty$} & {\cellcolor[HTML]{FFFFC7}0.62} & {\cellcolor[HTML]{ECF4FF}0.53} & {\cellcolor[HTML]{ECF4FF}0.63}  & {\cellcolor[HTML]{FFFFC7}0.71} & {\cellcolor[HTML]{FFFFC7}0.46} & {\cellcolor[HTML]{ECF4FF}0.44} & {\cellcolor[HTML]{ECF4FF}0.26} \\ \hline

5       & {\cellcolor[HTML]{FFFFC7}$\infty$} & {\cellcolor[HTML]{FFFFC7}0.62} & {\cellcolor[HTML]{ECF4FF}0.51} & {\cellcolor[HTML]{ECF4FF}0.63}  & {\cellcolor[HTML]{FFFFC7}0.46} & {\cellcolor[HTML]{FFFFC7}0.46} & {\cellcolor[HTML]{ECF4FF}0.37} & {\cellcolor[HTML]{ECF4FF}0.26} \\ \hline

6       & {\cellcolor[HTML]{FFFFC7}$\infty$} & {\cellcolor[HTML]{FFFFC7}0.62} & {\cellcolor[HTML]{ECF4FF}0.48} & {\cellcolor[HTML]{ECF4FF}0.63}  & {\cellcolor[HTML]{FFFFC7}0.26} & {\cellcolor{yellow}-} & {\cellcolor[HTML]{ECF4FF}0.25} & {\cellcolor[HTML]{ECF4FF}0.26} \\ \hline

7       & {\cellcolor[HTML]{FFFFC7}0.62} & {\cellcolor[HTML]{FFFFC7}0.62} & {\cellcolor{cyan}-} & {\cellcolor{cyan}-}  & {\cellcolor[HTML]{FFFFC7}0.06} & {\cellcolor[HTML]{FFFFC7}-} & {\cellcolor[HTML]{ECF4FF}0.06} & {\cellcolor{cyan}-} \\ \hline

8       & {\cellcolor[HTML]{FFFFC7}0.42} & {\cellcolor{yellow}-} & {\cellcolor[HTML]{ECF4FF}-} & {\cellcolor[HTML]{ECF4FF}-}  & {\cellcolor{yellow}-} & {\cellcolor[HTML]{FFFFC7}-} & {\cellcolor{cyan}-} & {\cellcolor[HTML]{ECF4FF}-} \\ \hline

9       & {\cellcolor[HTML]{FFFFC7}0.27} & {\cellcolor[HTML]{FFFFC7}-} & {\cellcolor[HTML]{ECF4FF}-} & {\cellcolor[HTML]{ECF4FF}-}  & {\cellcolor[HTML]{FFFFC7}-} & {\cellcolor[HTML]{FFFFC7}-} & {\cellcolor[HTML]{ECF4FF}-} & {\cellcolor[HTML]{ECF4FF}-} \\ \hline

10       & {\cellcolor{yellow}-} & {\cellcolor[HTML]{FFFFC7}-} & {\cellcolor[HTML]{ECF4FF}-} & {\cellcolor[HTML]{ECF4FF}-}  & {\cellcolor[HTML]{FFFFC7}-} & {\cellcolor[HTML]{FFFFC7}-} & {\cellcolor[HTML]{ECF4FF}-} & {\cellcolor[HTML]{ECF4FF}-} \\ \hline
\end{tabular}
  \caption{Optimal lead-times quotes.}
  \label{rtable}
\end{table}

\begin{figure}
\centering
\includegraphics[width=7cm,height=7cm]{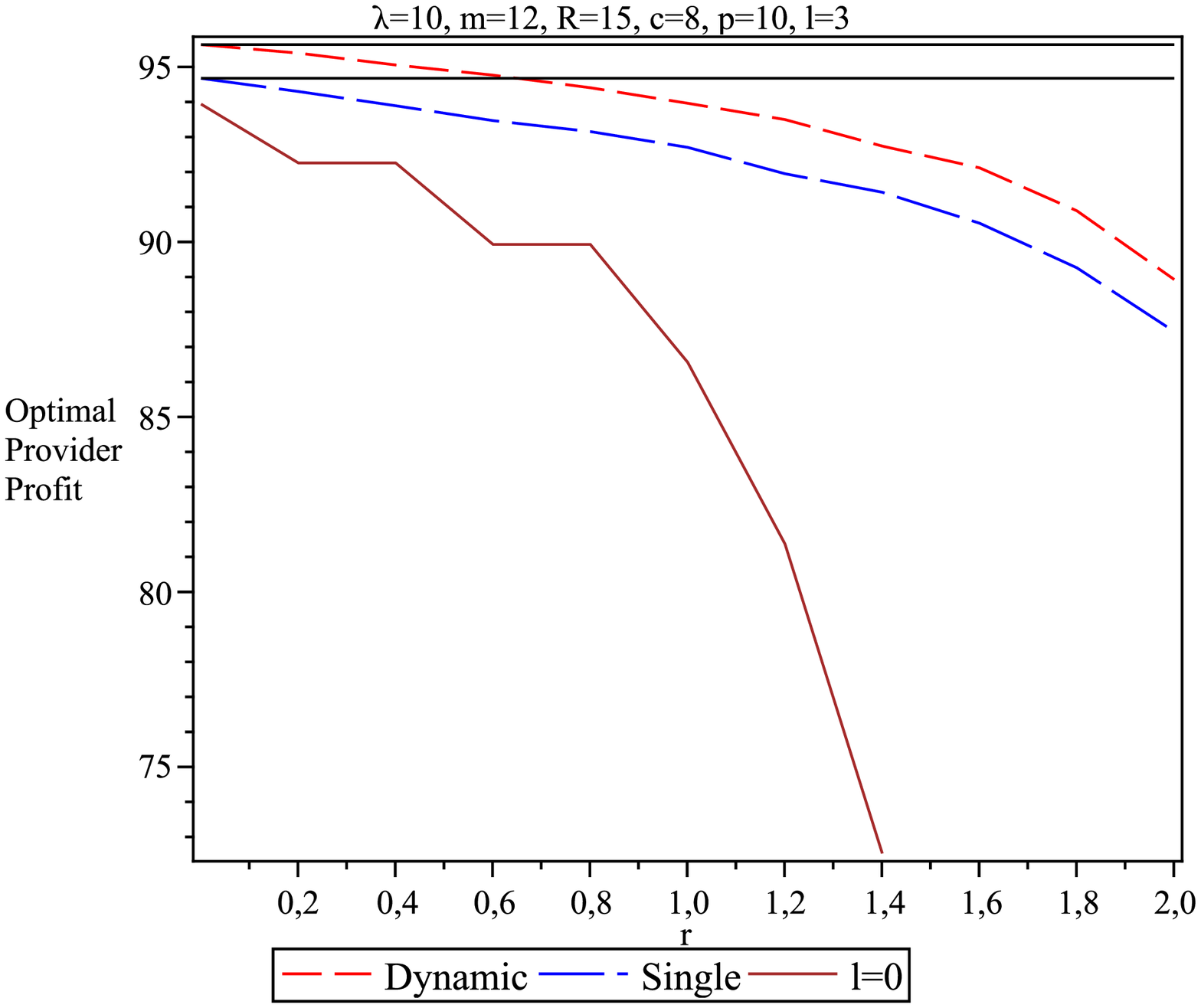}
\includegraphics[width=7cm,height=7cm]{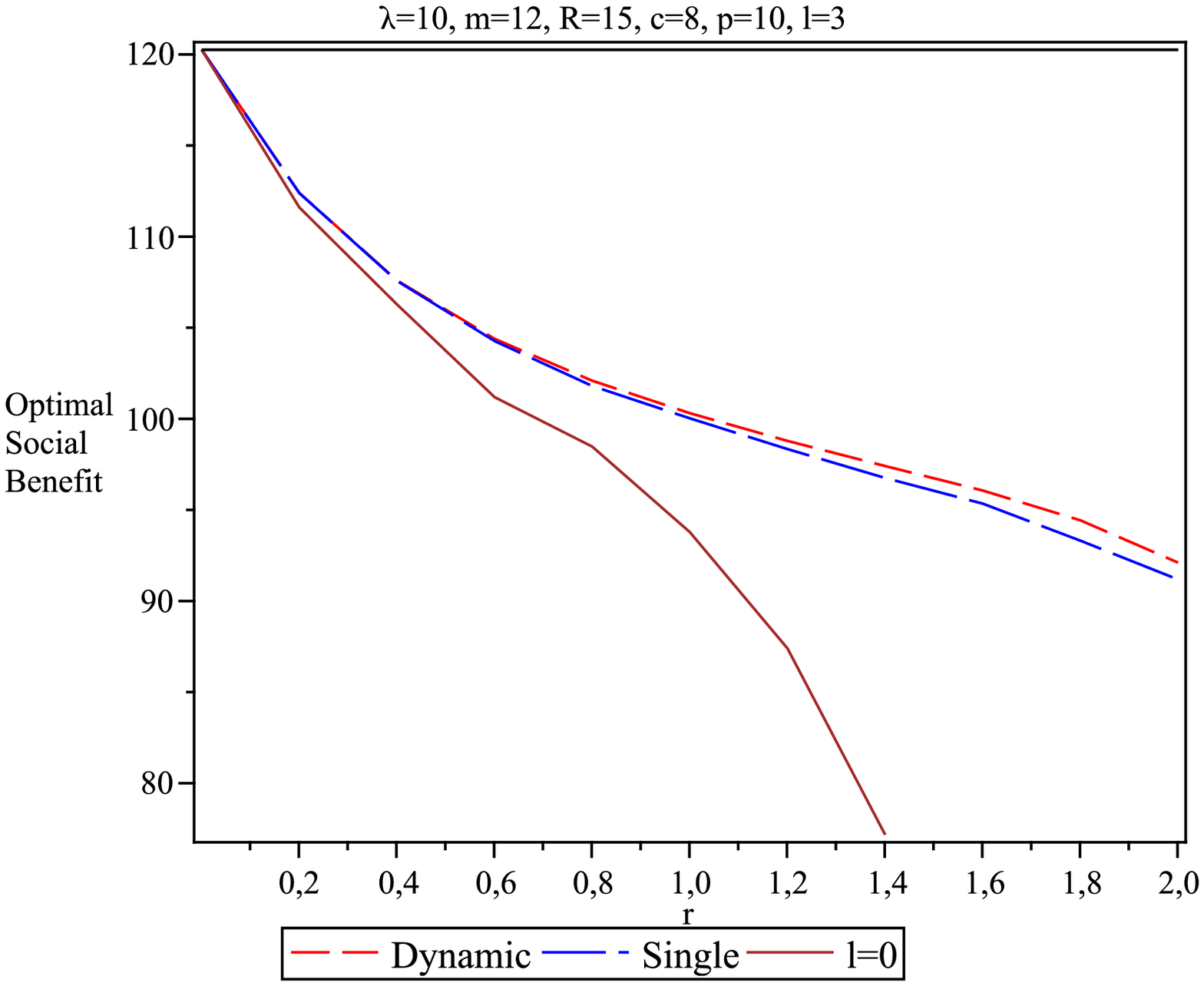}
\caption{The effect of risk aversion on optimal profits.}
\label{ProviderRA}
\end{figure}

In Figure \ref{ProviderRA} we present the optimal profits for the provider and social optimizer as a function of the risk aversion parameter $r$, under the optimal dynamic and single quotes policies as well as for the case of no compensation. These are compared to the benchmark case of no risk-aversion, indicated by the horizontal lines at the corresponding profit levels at $r=0$.

The left graph presents the provider's side. We observe that the profits under the dynamic and single policy are very close. Additionally, the presence of the compensation allows the provider to retain a significantly higher portion of the risk-neutral profit as the risk aversion increases, as well as serve customers under values of r higher than $\frac{\mu}{c}$, i.e., up to $\frac{\mu}{c-l}$. An interesting point here is that for low degrees of risk aversion, the dynamic policy leads to slightly more profits than the risk-neutral case for a single quote. Generally it is clear that both policies, dynamic and single, may substantially alleviate the detrimental effects of risk aversion.

From the social point of view, in the right graph, we observe that the difference in profits between the dynamic and the single policy are even smaller than in the provider's problem. In the presence of compensation the social optimizer can also offer appropriate lead-time quotes to gain more profits and service customers with higher degrees of aversion. Finally, under risk neutrality, profits are identical for both policies. However, the main difference with the provider's side is that the effect of risk aversion is significantly more intense, which means that the provider can handle the effect of aversion of customers more effectively than the social optimizer.

%end of part 2 -min req capacity
In the last set of experiments, we explore the effect of risk aversion on the minimum required capacity that a customer joins, i.e., the required service rate when the system is empty before the first customer arrives. In Figure \ref{mincapacityfig}, we present the minimum capacity that is required under values of $r$, $0$, $0.1$ and $0.5$.
The interesting point here is that the minimum capacity is quite sensitive to variations of the risk aversion coefficient. The fact that the risk-averse customers are more reluctant to join, and the absence of compensation forces the provider and the social optimizer to increase the service rate. On the other hand, when customers are risk-neutral, and at the same time they receive compensation, their expected benefit is sufficiently large, so the minimum required capacity can be significantly low. 

\begin{figure}
\centering
\includegraphics[width=9cm,height=7cm]{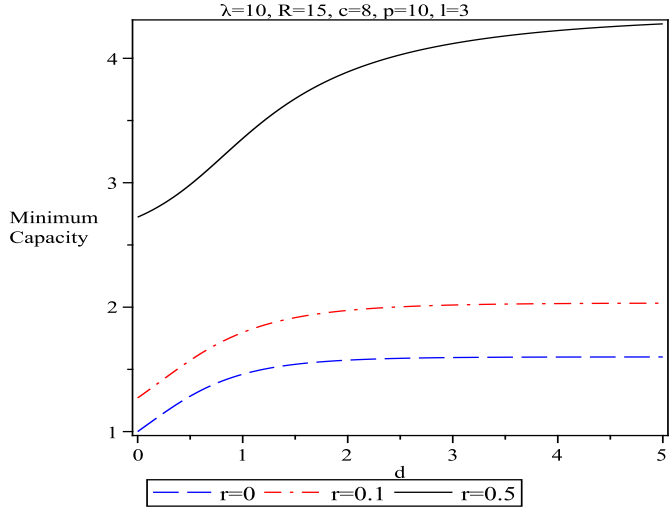}
\caption{Minimum required capacity as a function of lead-time.}
\label{mincapacityfig}
\end{figure}

We summarize the most interesting insights of this section:
\begin{enumerate}
\item The lead-time quotation policy approach may lead to longer queues than is socially optimal.

\item Varying the entrance fee causes significant changes in profits both for the provider and the social optimizer.

\item Varying the compensation rate has a less drastic effect on the profits than varying the entrance fee. The main benefit of the compensation is that it allows the provider and the social optimizer to set higher entrance fees without losing customers, even when they are risk-averse. Additionally, it enables accepting customers under a larger range of risk aversion parameter values.
 
\item By setting an appropriate single lead-time quote, both the provider and the social optimizer can earn almost the same profit as under a dynamic policy. This is important, because applying a dynamic lead-time policy may require more expensive equipment and higher effort, and may give a sense of unequal treatment to incoming customers.

\item The consequences of risk aversion can be addressed more effectively by both the dynamic and single strategy for the profit maximization problem compared to the social benefit optimization.

\item The minimum required capacity to entice customers to join the system is very sensitive to the level of risk aversion.
\end{enumerate}

\section{Conclusions}\label{Conclusions}
In this paper we develop customer equilibrium strategies and profit maximizing policies for an observable Make-To-Order/service production system with risk-averse customers, where the service provider and the social optimizer, adopts a lead-time quotation strategy, i.e., dynamic or single, and a corresponding balking threshold. In addition to the lead-time there is also a fixed entrance fee and a delay compensation for the excess delay above the lead-time quote. The risk aversion is modeled by a CARA utility function of the customer net benefit. 

We analyze and solve four problems and we compare the optimal lead-time/threshold strategies in each case. Under the dynamic quotation policy, we present an algorithm both for the provider and social optimizer where we derived the optimal balking thresholds, while for the single quotation problems, the optimal quotation strategy can be found by exhaustive searching algorithms. We also explore the effect of risk aversion respect to the thresholds, minimum required capacity and optimal profits, compared to the risk-neutral case. 
In the numerical section, we illustrate several tables and plots to quantify the effect of risk aversion on the optimal policies and profits, as well as a sensitivity analysis for the other two key parameters of the system, i.e., the entrance fee and the compensation rate. It is shown that the lead-time quotation policy may lead to longer queues than it is socially desired which is a contradictory result of Naor where there is control of the entrance fee instead. Moreover, the provider can address the risk aversion of customers more efficiently than the social optimizer. Regarding the question about which policy is more profitably the answer is dynamic, but the differences with the single one is negligible. 
Finally, we observe how the maximum capacity of the queue is drastically increasing in the risk aversion coefficient.
 
This work could be extended in several directions. A natural extension is to expand this model under several classes of customers regarding one of their characteristics, e.g, their waiting cost. Furthermore, it would be interesting to consider the case of a Make-To-Stock system where the provider has also to decide about the optimal base stock level and compare the results with the Make-To-Order strategy. Finally, the lead-time quotation/compensation approach could be studied in a queueing network environment, such as an unobservable system of $N$ heterogeneous queues in series analyzed in \cite{Burnetas2013}. In the network setting the optimal quotation problem obtains a dynamic nature due to the sequence of join/balk customer decisions, although the system state itself is unobservable.

\section*{Appendix A}

\begin{proof}[Proof of Lemma \ref{monotonicity}] 
\begin{itemize}
\item[$(i.)$]
From known properties of the Gamma distribution it follows $X_n \leq_{st} X_{n+1}.$ In addition $U(X)$ is decreasing in $X$ since $l \leq c$. Thus, $E\left(U(X_{n+1})\right) \leq E\left(U(X_{n})\right)$ and $B_{n+1}(d) \leq B_n(d)$, for all $d.$
The monotonicity of $B_n(d)$ in terms of $d$ for a fixed $n$ is immediate.
\item[$(ii.)$]
For a fixed $d$, since the function $(X-d)^{+} \text{ is increasing in $X$ and}$ $X_n \leq_{st} X_{n+1},$ it follows that $E(X_n-d)^{+} \leq E(X_{n+1}-d)^{+}.$ Therefore $L_n(d) \leq L_{n+1}(d),$ for all $d.$
The monotonicity of $L_n(d)$ in terms of $d$ for a fixed $n$ is immediate.
\item[$(iii.)$]
We know that $G_n(d)=p-lL_n(d)$. Therefore the proof follows from $(ii.)$.
\end{itemize}
\end{proof}

\begin{proof}[Proof of Proposition \ref{range n0}]
Let a fixed $D=(d_0,d_1, \ldots)$. From Lemma \ref{monotonicity} we know that $B_n(d)$ is decreasing in $d$ and $n$ which implies that: $\lim_{d \to \infty} B_n(d) \leq B_n(d_n) \leq B_n(0).$
From \eqref{SumBn} we obtain: $$B_n(0)=\frac{1-e^{-r(R-p)}\left( \frac{\mu}{\mu-r(c-l)}\right) ^{n+1}}{r} \text{ and } \lim_{d \to \infty} B_n(d)=\frac{1-e^{-r(R-p)}\left( \frac{\mu}{\mu-rc}\right) ^{n+1}}{r}.$$ 
Thus, $B_n(0)<0$ for all $n \geq \overline{n}$ and $ \lim_{d \to \infty} B_n(d)<0$ for all $n \geq \underline{n}$.

Since $B_n(d_n) \leq B_n(0)$ for all $n$, it follows that $B_n(d_n)<0$ for all $n \geq \overline{n}$. Therefore, $n_0(D) \leq \overline{n}$.
On the other hand, $B_n(d_n) \geq \lim_{d \to \infty} B_n(d)$ for all $n$, which implies that $B_n(d_n)\geq0$ for all $n < \underline{n}$. Thus, $n_0(D) \geq \underline{n}$.
Therefore, the range of  values of $n_0$ is: $\underline{n}\leq n_0(D)\leq\overline{n}.$
\end{proof}

\begin{proof}[Proof of Lemma \ref{dn_monotonicity}]
\begin{itemize}
\item[$(i.)$]
Since, from Lemma \ref{monotonicity}, $B_n(d)$ is decreasing in $d$, we have that $B_n(d) \geq 0 \text{ if and only if } d \leq \tilde{d}^{P}_{n}.$
For any $d \leq \tilde{d}^{P}_{n+1}$ it follows that $B_{n+1}(d) \geq 0$ and since $B_n(d)$ is decreasing in $n$ from Lemma \ref{monotonicity}, it follows $B_n(d) \geq 0,$ thus $d \leq \tilde{d}^{P}_{n}.$ We thus see that $\tilde{d}^{P}_{n+1} \leq \tilde{d}^{P}_{n}.$
\item[$(ii.)$]
For $n < \underline{n}, \: \lim_{d \to \infty} B_n(d) > 0,$ which implies that $\sup \lbrace d \geq 0: B_n(d) \geq 0\rbrace=\infty.$
Therefore, $\tilde{d}^{P}_{n}=\infty.$
For $n=\underline{n}, \: \lim_{d \to \infty} B_n(d) \geq 0.$
\item[$(iii.)$] We know from Lemma \ref{monotonicity} that $L_n(d)$ increasing in $n$ and decreasing in $d$. We have also proved that $\tilde{d}^{P}_{n+1} \leq \tilde{d}^{P}_{n}.$
Therefore, $L_n(\tilde{d}^{P}_{n}) \leq L_{n+1}(\tilde{d}^{P}_{n}) \leq L_{n+1}(\tilde{d}^{P}_{n+1}).$ Thus, $L_n(\tilde{d}^{P}_{n})$ is increasing in $n$ and $G_n(\tilde{d}^{P}_{n})$ is decreasing in $n$.
\end{itemize}
\end{proof}

\begin{proof}[Proof of Proposition \ref{opt quot}]
Let $n_0$ be fixed and $D_{n_0}=(d_0, d_1, d_2,\ldots,d_{n_0-1}, d_{n_0})$ be a feasible solution of \eqref{firstmaxpro}, with acceptance threshold $n_0,$ i.e., $B_n(d_n)\geq0$ for $n=0,1,\ldots,n_0-1$ and $B_{n_0}(d_{n_0})<0$ with $d_n\geq0$ for $n=0, 1,\ldots, n_0$.

Since $B_n(d_n)$ is decreasing from Lemma \ref{monotonicity}, it follows $d_n \leq \tilde{d}^{P}_{n}.$
Assume that $d_k < \tilde{d}^{P}_{k}$ for some $k \leq n_0-1.$ We will show that $D_{n_0}$ cannot be an optimal solution to \eqref{firstmaxpro}. To see this, define another solution $D^{'}_{n_0}=(d^{'}_0, d^{'}_1, d^{'}_2,\ldots, d^{'}_k,\ldots,d^{'}_{n_0-1}, d^{'}_{n_0})$ with $d^{'}_k = \tilde{d}^{P}_{k}$ and $d^{'}_n=d_n$  for all $n \neq k$.

Then $B_n(d^{'}_n)\geq0$ for $n=0,1,...,n_0-1$ and $B_{n_0}(d^{'}_{n_0})<0$ with $d^{'}_n \geq0$ for $n=0, 1,.., n_0$. Thus, $D^{'}_{n_0}$ is feasible. Also $F_k(d^{'}_k)>F_k(d_k)$, therefore $D_{n_0}$ is not optimal.
\end{proof}

\begin{proof}[Proof of Proposition \ref{optimalthresholdprovider}]

Letting $G_n=G_n(\tilde{d}^{P}_{n})$, $H(n_0)$ can be written as: 
$$H(n_0)=\lambda \frac{\displaystyle\sum_{n=0}^{n_0-1} \rho^{n} G_n}{\displaystyle\sum_{n=0}^{n_0} \rho^{n}}.$$ 
 
For any $\underline{n}\leq n_0\leq \overline{n}-1$ we have, 
\begin{align}\label{Aalgorithm}
H(n_0+1)-H(n_0)&=\lambda \frac{\displaystyle\sum_{n=0}^{n_0} \rho^{n} \displaystyle\sum_{n=0}^{n_0} \rho^{n} G_n -\displaystyle\sum_{n=0}^{n_0+1} \rho^{n} \displaystyle\sum_{n=0}^{n_0-1} \rho^{n} G_n }{\displaystyle\sum_{n=0}^{n_0+1} \rho^{n}\displaystyle\sum_{n=0}^{n_0} \rho^{n}} \nonumber \\
&=\lambda \frac{\rho^{n_0} G_{n_0} \displaystyle\sum_{n=0}^{n_0} \rho^{n} -\rho^{n_0+1}\displaystyle\sum_{n=0}^{n_0-1} \rho^{n} G_n}{\displaystyle\sum_{n=0}^{n_0+1} \rho^{n}\displaystyle\sum_{n=0}^{n_0} \rho^{n}} \nonumber\\
&=\lambda \frac{\rho^{n_0}}{\displaystyle\sum_{n=0}^{n_0+1} \rho^{n}\displaystyle\sum_{n=0}^{n_0} \rho^{n}} \left( G_{n_0} \displaystyle\sum_{n=0}^{n_0} \rho^{n}-\rho \displaystyle\sum_{n=0}^{n_0-1} \rho^{n} G_n \right) \nonumber\\
&=\lambda \frac{\rho^{n_0}}{\displaystyle\sum_{n=0}^{n_0+1} \rho^{n}\displaystyle\sum_{n=0}^{n_0} \rho^{n}}A(n_0).
\end{align}

We next show that $A(n_0)$ is decreasing in $n_0$. For any $\underline{n}\leq n_0\leq \overline{n}-2$,
\begin{align}\label{Amonoton}
A(n_0+1)-A(n_0)&=G_{n_0+1}\displaystyle\sum_{n=0}^{n_0+1} \rho^{n}-\rho \displaystyle\sum_{n=0}^{n_0} \rho^{n} G_n-G_{n_0}\displaystyle\sum_{n=0}^{n_0} \rho^{n}+\rho \displaystyle\sum_{n=0}^{n_0-1} \rho^{n} G_n \nonumber\\
&= \displaystyle\sum_{n=0}^{n_0} \rho^{n} \left( G_{n_0+1}-G_{n_0} \right)+\rho^{n_0+1} \left( G_{n_0+1}-G_{n_0} \right) \nonumber\\
&= \left( G_{n_0+1}-G_{n_0} \right) \displaystyle\sum_{n=0}^{n_0+1} \rho^{n} \nonumber\\
&= \left( G_{n_0+1}-G_{n_0} \right) \frac{1-\rho^{n_0+2}}{1-\rho} \leq 0,
\end{align}
since $G_n$ decreasing in $n$ from Lemma \ref{dn_monotonicity}.

From \eqref{Aalgorithm} and \eqref{Amonoton} it follows that
$H(n_0+1)-H(n_0) \geq 0$ for $n<\tilde{n}_P$ and $H(n_0+1)-H(n_0) < 0$ for $n \geq \tilde{n}_P$. 
Thus, $\tilde{n}_P$ is an optimal value that maximizes $G(n_0).$ 
\end{proof}

\begin{lemma}\label{Fn_monotonicitysoc}
For $n \leq n_0-1:$
\begin{itemize}
\item[(i.)] $(G_n+B_n)(\tilde{d}^{S}_{n})$ is decreasing in $n$
\item[(ii.)] $B_n(\tilde{d}^{P}_{n})\leq B_n(\tilde{d}^{S}_{n})$
\end{itemize}
\end{lemma}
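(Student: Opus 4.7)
The plan is to prove part (ii) first and then exploit a short feasibility-and-pointwise-monotonicity chain for part (i).

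For part (ii), the key observation is that $\tilde{d}^{S}_{n} \leq \tilde{d}^{P}_{n}$ in every case. This can be read off directly from Proposition \ref{unimodal}: either $\tilde{d}^{S}_{n}=\tilde{d}^{P}_{n}$ (when $\tilde{d}^{P}_{n}<\infty$ and $a(\tilde{d}^{P}_{n})\geq 0$), or $\tilde{d}^{S}_{n}$ lies in $[0,\tilde{d}^{P}_{n})$. Combined with Lemma \ref{monotonicity}(i), which states that $B_n(\cdot)$ is decreasing in $d$, this immediately yields $B_n(\tilde{d}^{P}_{n}) \leq B_n(\tilde{d}^{S}_{n})$. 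In the boundary case $\tilde{d}^{P}_{n}=\infty$ (which arises for $n<\underline{n}$ by Lemma \ref{dn_monotonicity}(ii)), one interprets $B_n(\tilde{d}^{P}_{n})$ as $\lim_{d\to\infty} B_n(d)$; the same monotonicity in $d$ delivers the inequality since $\tilde{d}^{S}_{n}<\infty$.

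For part (i), write $F_n(d)=G_n(d)+B_n(d)$ and chain the desired inequality through the intermediate value $F_n(\tilde{d}^{S}_{n+1})$:
\[
F_n(\tilde{d}^{S}_{n}) \;\geq\; F_n(\tilde{d}^{S}_{n+1}) \;\geq\; F_{n+1}(\tilde{d}^{S}_{n+1}).
\]
The first inequality rests on the optimality of $\tilde{d}^{S}_{n}$ for $F_n$ on the feasibility set $\{d\geq 0: B_n(d)\geq 0\}=[0,\tilde{d}^{P}_{n}]$ (Proposition \ref{unimodal}), provided $\tilde{d}^{S}_{n+1}$ is itself feasible for the level-$n$ problem. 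That feasibility follows by combining Proposition \ref{unimodal} applied at index $n+1$, which gives $\tilde{d}^{S}_{n+1}\leq \tilde{d}^{P}_{n+1}$, with Lemma \ref{dn_monotonicity}(i), which gives $\tilde{d}^{P}_{n+1}\leq \tilde{d}^{P}_{n}$. The second inequality reduces to pointwise monotonicity in $n$: Lemma \ref{monotonicity} asserts both that $B_n(d)$ is decreasing in $n$ and that $L_n(d)$ is increasing in $n$, whence $G_n(d)=p-lL_n(d)$ is decreasing in $n$; adding the two gives $F_{n+1}(d)\leq F_n(d)$ for every $d$, and in particular at $d=\tilde{d}^{S}_{n+1}$.

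I do not anticipate any genuine obstacle; the entire argument is a short chain of appeals to results already in place. The only detail worth flagging is the edge case $\tilde{d}^{P}_{n}=\infty$, handled uniformly by the limiting interpretation of $B_n(\infty)$ and by the fact that $\tilde{d}^{S}_{n}$ is always finite per Proposition \ref{unimodal}.
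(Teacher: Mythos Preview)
Your proof is correct and follows essentially the same approach as the paper: for (ii) both use $\tilde{d}^{S}_{n}\leq\tilde{d}^{P}_{n}$ together with the monotonicity of $B_n$ in $d$, and for (i) both chain through the intermediate value $(G_n+B_n)(\tilde{d}^{S}_{n+1})$ via optimality of $\tilde{d}^{S}_{n}$ and pointwise monotonicity in $n$. Your version is somewhat more explicit about the feasibility step $\tilde{d}^{S}_{n+1}\leq\tilde{d}^{P}_{n+1}\leq\tilde{d}^{P}_{n}$ and the $\tilde{d}^{P}_{n}=\infty$ edge case, but the substance is identical.
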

\begin{proof}
\begin{itemize}
\item[$(i.)$] From Lemma \ref{monotonicity}, $(G_{n+1}+B_{n+1})(d) \leq (G_n+B_n)(d)$ for all $d \geq 0$.
From the definition of $\tilde{d}^{S}_{n}$, it implies that $(G_n+B_n)(d) \leq (G_n+B_n)(\tilde{d}^{S}_{n})$ for all $d \leq \tilde{d}^{P}_{n}$. 

Let $d=\tilde{d}^{S}_{n+1}$. From the above inequalities we have: $(G_{n+1}+B_{n+1})(\tilde{d}^{S}_{n+1}) \leq (G_n+B_n)(\tilde{d}^{S}_{n})$.
\item[$(ii.)$] It follows from lemma \ref{monotonicity} and the fact that $\tilde{d}^{P}_{n} \geq \tilde{d}^{S}_{n}$.
\end{itemize}
\end{proof}

\begin{proof}[Proof of Proposition \ref{optimalthresholdsoc}]

The proof is along similar lines as in Proposition \ref{optimalthresholdprovider}, using that $(G_n+B_n)(\tilde{d}^{S}_{n})$ decreasing in $n$ from Lemma \ref{Fn_monotonicitysoc}.
\end{proof}

\section*{Appendix B}
In Appendix B we provide some properties and theoretical results from probability theory that are necessary for the analysis in section \ref{Social_welfare}. Specifically, to determine the optimal single lead-time quotation strategy, we need to use the marginal distribution instead of the conditional one that we use in the dynamic problem. 

Let $n, X$ denote the number of customers that an arriving customer finds upon arrival and the sojourn time of this customer respectively, in a $M/M/1/n_0$ queue with service rate $\mu$ in steady-state.  

From the PASTA property the distribution of $n$ is identical to the steady-state distribution of the queue length, i.e.,
 \begin{equation*}
 q(n;n_0)=\frac{\rho^{n}(1-\rho)}{1-\rho^{n_0+1}}.
 \end{equation*}
The conditional distribution of $X$ given that there are $n$ customers in the queue is $Gamma(n+1,\mu)$. Let $f_{n,\mu}(x)$ and $\overline{F}_{n,\mu}(x)$ be the corresponding pdf and tail probability respectively. 

The marginal distribution of $X$ is determined as the sojourn time of a customer in a finite capacity queue. We obtain:
\begin{align*}
\displaystyle f_{\mu}(x)
&=\sum_{n=0}^{n_0-1} q(n;n_0) f_{n,\mu}(x)\\
&=\lambda\frac{(1-\rho)e^{-(\mu-\lambda)x}}{\rho(1-\rho^{n_0})} \sum_{n=0}^{n_0-1} \frac{(\lambda x)^{n} e^{-\lambda x}}{n!}\\
&=(\mu-\lambda ) e^{-(\mu-\lambda)x} \frac{Pois(n_0-1;\lambda x)}{1-\rho^{n_0}},
\end{align*}
and
\begin{equation*}
\overline{F}_{\mu}(x)=\int\limits_{x}^\infty (\mu-\lambda ) e^{-(\mu-\lambda)x} \frac{Pois(n_0-1;\lambda x)}{1-\rho^{n_0}}\mathrm{d}x,
\end{equation*}
where $Pois(n_0-1;\lambda x)$ the cdf of a Poisson distribution with rate $\lambda x$.

The corresponding hazard rated of the conditional and the marginal distribution of X are:
\begin{equation*}
h_{\mu}(x)=\frac{f_{\mu}(x)}{\overline{F}_{\mu}(x)},
\end{equation*}
\begin{equation*} 
h_{n,\mu}(x)=\frac{f_{n,\mu}(x)}{\overline{F}_{n,\mu}(x)}.
\end{equation*}

\subsection*{B.1. Dynamic lead-time quotation for social Optimization problem}
\begin{lemma}\label{lemmaforunimodal}
For $X\sim Gamma(n,\mu)$ and $Y\sim Gamma(n,v)$ with $n\in \mathbb{N}$ and $v<\mu$, it is true that:
\begin{itemize}
\item[(i.)] The hazard rate $h_{n,\mu}(d)$ is increasing in $\mu$
\item[(ii.)] $\frac{\overline{F}_{n,\mu}(d)}{\overline{F}_{n,v}(d)}$ is decreasing in $d$.
\end{itemize}
\end{lemma}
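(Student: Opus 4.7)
The plan is to prove part (i) first and then derive part (ii) as an immediate consequence via a logarithmic-derivative argument.

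For part (i), I would work with the reciprocal of the hazard rate, $\frac{1}{h_{n,\mu}(d)} = \frac{\overline{F}_{n,\mu}(d)}{f_{n,\mu}(d)}$, and write it as a single integral. Using $\overline{F}_{n,\mu}(d) = \int_d^\infty f_{n,\mu}(x)\,dx$ together with the Gamma pdf ratio $\frac{f_{n,\mu}(x)}{f_{n,\mu}(d)} = \left(\frac{x}{d}\right)^{n-1} e^{-\mu(x-d)}$, the substitution $u=x-d$ gives
\begin{equation*}
\frac{1}{h_{n,\mu}(d)} = \int_0^\infty \left(1+\frac{u}{d}\right)^{n-1} e^{-\mu u}\, du.
\end{equation*}
The integrand is positive and strictly decreasing in $\mu$ for each $u>0$, so the integral is strictly decreasing in $\mu$; therefore $h_{n,\mu}(d)$ is increasing in $\mu$, as claimed.

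For part (ii), I would simply differentiate the logarithm of the ratio. Since $\frac{d}{dd}\ln \overline{F}_{n,\mu}(d) = -h_{n,\mu}(d)$, we have
\begin{equation*}
\frac{d}{dd}\ln \frac{\overline{F}_{n,\mu}(d)}{\overline{F}_{n,v}(d)} = -h_{n,\mu}(d) + h_{n,v}(d).
\end{equation*}
Because $v<\mu$, part (i) yields $h_{n,v}(d) \leq h_{n,\mu}(d)$, so the right-hand side is nonpositive. Consequently $\ln\frac{\overline{F}_{n,\mu}(d)}{\overline{F}_{n,v}(d)}$, and hence the ratio itself, is decreasing in $d$.

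The only subtle step is establishing the monotonicity in $\mu$ of the hazard rate for a Gamma distribution with integer shape. The integral representation above makes this transparent, avoiding the need to directly manipulate derivatives of $\overline{F}_{n,\mu}$ with respect to $\mu$ (which would involve exchanging differentiation and integration and then verifying a sign). An alternative, more probabilistic route would be to invoke the fact that $X_\mu \stackrel{d}{=} X_1/\mu$ and that $X_1 \sim Gamma(n,1)$ has an increasing hazard rate, so scaling by $1/\mu$ decreases it pointwise; this would give part (i) as well, but I expect the direct integral computation to be the cleanest path for a self-contained appendix.
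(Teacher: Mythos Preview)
Your argument is correct. Both parts are proved soundly: the integral representation
\[
\frac{1}{h_{n,\mu}(d)}=\int_0^\infty\Bigl(1+\tfrac{u}{d}\Bigr)^{n-1}e^{-\mu u}\,du
\]
is valid (the pdf ratio and the substitution $u=x-d$ are both fine), and monotonicity in $\mu$ of the integrand gives (i) immediately. Part (ii) then follows from the logarithmic derivative exactly as you wrote.

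The paper's proof has the same overall structure---show the reciprocal hazard is decreasing in $\mu$, then deduce (ii) by comparing hazard rates---but carries out (i) via a different representation. It invokes the Gamma--Poisson identity $\overline{F}_{n,\mu}(d)=\sum_{k=0}^{n-1}e^{-\mu d}(\mu d)^k/k!$ to write
\[
\frac{\overline{F}_{n,\mu}(d)}{f_{n,\mu}(d)}=\frac{(n-1)!}{d^{\,n-1}}\sum_{k=0}^{n-1}\mu^{k-n}\,\frac{d^k}{k!},
\]
a finite sum of strictly negative powers of $\mu$, hence decreasing in $\mu$. For (ii) the paper differentiates the ratio $\overline{F}_{n,\mu}/\overline{F}_{n,v}$ directly rather than its logarithm, which of course amounts to the same hazard-rate comparison you use. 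The trade-off is modest: the paper's finite-sum route leans on the integer shape parameter and the Poisson connection, while your integral representation is arguably cleaner, needs no special identity, and would extend verbatim to non-integer shape (though that generality is not needed here).
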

\begin{proof}
\begin{itemize}
\item[$(i.)$]
Using the relationship between Gamma and Poisson distribution it follows that: 
\begin{equation*}
\overline{F}_{n,\mu}(d)=P(X_1+\ldots+X_n>d)=Pois(n_0-1;\mu d)=\sum_{k=0}^{n-1} e^{-\mu d} \frac{(\mu d)^k}{k!}.
\end{equation*} 
As a result, 
\begin{equation*}
\frac{\overline{F}_{n,\mu}(d)}{f_{n,\mu}(d)}=\frac{e^{-\mu d} \sum_{k=0}^{n-1} \frac{(\mu d)^k}{k!} }{e^{-\mu d}\frac{\mu^{n}d^{n-1}}{(n-1)!}}=\frac{(n-1)!}{d^{n-1}} \sum_{k=0}^{n-1} \mu^{k-n}\frac{d^k}{k!}.
\end{equation*}
Since $k-n<0$, the last term is decreasing in $\mu$, thus $h_{n,\mu}(d)$ is increasing in $\mu$.

\item[$(ii.)$]
The first derivative of $\frac{\overline{F}_{n,\mu}(d)}{\overline{F}_{n,v}(d)}$ with respect to $d$ is:
\begin{equation*}
\left( \frac{\overline{F}_{n,\mu}(d)}{\overline{F}_{n,v}(d)}\right)^{'}=\frac{-f_{n,\mu}(d)\overline{F}_{n,v}(d)+f_{n,v}(d)\overline{F}_{n,\mu}(d)}{(\overline{F}_{n,v}(d))^2}.
\end{equation*}
From $(i.)$ it follows that $h_{n,v}(d)<h_{n,\mu}(d)$, therefore the numerate of the above fraction is negative and the proof is complete.
\end{itemize}
\end{proof}

\begin{proof}[Proof of Proposition \ref{unimodal}]
We fix $n \in \left\lbrace 0,1, \ldots ,\overline{n}-1 \right\rbrace$, and consider the problem of maximizing $G_n(d_n)+B_n(d_n)$ in $d_n\in[0,\tilde{d}^{P}_{n}]$.
The first derivative of $G_n(d_n)+B_n(d_n)$ is:
\begin{align*}
(G_n(d_n)+B_n(d_n))^{'}
&=l\int\limits_{d_n}^\infty \left( 1-e^{-r(R-p-(c-l)x-ld_n)} \right)   f_{n,\mu}(x)\mathrm{d}x\\
&=l\left(  \int\limits_{d_n}^\infty f_{n,\mu}(x)\mathrm{d}x -e^{-r(R-p)} e^{rld_n} \int\limits_{d_n}^\infty e^{r(c-l)x}   f_{n,\mu}(x)\mathrm{d}x \right). 
\end{align*}

Since, 
\begin{align*}
\int\limits_{d_n}^\infty e^{r(c-l)x}   f_{n,\mu}(x)\mathrm{d}x
&=\left( \frac{\mu}{v}\right)^{n+1} \int\limits_{d_n}^\infty \frac{(v)^{n+1}x^n e^{-(v)x}}{n!} \mathrm{d}x\\
&=\left( \frac{\mu}{v}\right)^{n+1} \int\limits_{d_n}^\infty  f_{n,v}(x)\mathrm{d}x\\
&=\left( \frac{\mu}{v}\right)^{n+1} \overline{F}_{n,v}(d_n),
\end{align*}
it follows that:
\begin{align}\label{a_d}
(G_n(d_n)+B_n(d_n))^{'}
&=l\left(  \overline{F}_{n,\mu}(d_n)-e^{-r(R-p)} e^{rld_n} \left( \frac{\mu}{v}\right)^{n+1} \overline{F}_{n,v}(d_n) \right)\nonumber \\
&=le^{rld_n}\overline{F}_{n,v}(d_n) \left( \frac{\overline{F}_{n,\mu}(d_n)}{\overline{F}_{n,v}(d_n)}e^{-rld}-\left( \frac{\mu}{v}\right)^{n+1} e^{-r(R-p)} \right)\nonumber\\
&=le^{rld_n}\overline{F}_{n,v}(d_n) a(d_n),
\end{align}
where $a(d_n)$ is defined in \eqref{dtildesoc}. We will show that the derivative in \eqref{dtildesoc} is either minimized at a unique point in $[0,\tilde{d}^{P}_{n}]$ or it is always positive. From Lemma \ref{lemmaforunimodal} it follows that $a(d_n)$ is decreasing in $d_n$.

For $d_n=0$, we have $\overline{F}_{n,\mu}(0)=\overline{F}_{n,v}(0)=1$. Thus:
\begin{equation*}
a(0)=1-e^{-r(R-p)} \left(\frac{\mu}{v}\right)^{n+1} \geq0.
\end{equation*} 

For $d_n \to \infty$, by applying de L' Hospital's rule,
\begin{align*}
\lim_{d_n \to \infty} \frac{\overline{F}_{n,\mu}(d_n)e^{-rld_n}}{\overline{F}_{n,v}(d_n)}
&=\lim_{d_n \to \infty} \frac{f_{n,\mu}(d_n)e^{-rld_n}+rle^{-rld_n}\overline{F}_{n,\mu}(d_n)}{f_{n,v}(d_n)}\\
&=\lim_{d_n \to \infty} \frac{f_{n,\mu}(d_n)e^{-rld_n}}{f_{n,v}(d_n)} \left( 1+\frac{1}{h_{n,\mu}(d_n)} \right)     
\end{align*}
and also,
\begin{equation*}
\lim_{d_n \to \infty} \frac{f_{n,\mu}(d_n)e^{-rld_n}}{f_{n,v}(d_n)}= \left(\frac{\mu}{v}\right)^{n+1} e^{-rcd_n}=0,
\end{equation*}
and
\begin{equation*}
\lim_{d_n \to \infty} \frac{1}{h_{n,\mu}(d_n)}>0,
\end{equation*}
since the hazard rate $h_{n,\mu}(d_n)$ of Gamma distribution is increasing in $d_n$.

It follows that:
\begin{equation*}
\lim_{d_n \to \infty} a(d_n)=-\left( \frac{\mu}{v}\right)^{n+1} e^{-r(R-p)}<0.
\end{equation*}
Since, $a(0) \geq 0$ and $\lim_{d_n \to \infty} a(d_n)<0$ and $a(d_n)$ is decreasing in $d_n$, there is a unique $d^0<\infty$ such that $a(d^0)=0$. Furthermore, $a(d_n) \geq 0$ for all $d_n \leq d^0$ and $a(d_n) < 0$ otherwise. 

Returning to \eqref{a_d}, we consider the following cases regarding $\tilde{d}^{S}_{n}$:
\begin{itemize}
\item[$(i.)$] If $\tilde{d}^{P}_{n}=\infty$, then from the previous discussion on $a(d_n)$, it follows that $(G_n(d_n)+B_n(d_n))^{'} \geq 0$ for all $d_n \leq d^0$ and negative for $d_n > d^0$.
Therefore, the maximum of $G_n(d_n)+B_n(d_n)$ in $d_n\in[0,\infty]$ is $\tilde{d}^{S}_{n}=d^0.$
\item[$(ii.)$] If $\tilde{d}^{P}_{n}<\infty$, then the maximum point of $G_n(d_n)+B_n(d_n)$ depends on the sign of $a(\tilde{d}^{P}_{n})$. If $a(\tilde{d}^{P}_{n})<0$ then $\tilde{d}^{S}_{n}=d^0\in[0,\tilde{d}^{P}_{n}]$, otherwise $\tilde{d}^{S}_{n}=\tilde{d}^{P}_{n}$.
\end{itemize}
\end{proof}

\subsection*{B.2. Single lead-time quotation for social Optimization problem}

\begin{lemma}\label{lemmaforunimodalsingle}
Let $X$, $Y$ be the sojourn times of a customer in two finite capacity $M/M/1/n_0$ queues in steady state, with service rates $\mu$ and $v$, respectively, and $v<\mu$. Then:
\begin{itemize}
\item[(i.)] The hazard rate $h_{\mu}(d)$ is increasing in $\mu$
\item[(ii.)] $\frac{\overline{F}_{\mu}(d)}{\overline{F}_{v}(d)}$ is decreasing in $d$.
\end{itemize}
\end{lemma}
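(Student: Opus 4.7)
The plan is to establish the stronger likelihood ratio ordering $f_\mu(x)/f_v(x)$ decreasing in $x$ for $v<\mu$, and then deduce both (i) and (ii) as standard consequences. In fact (i) and (ii) are equivalent restatements of hazard rate ordering: differentiating the ratio of survival functions gives
\[
\left(\frac{\overline{F}_{\mu}(d)}{\overline{F}_{v}(d)}\right)' = \frac{f_v(d)\overline{F}_{\mu}(d)-f_\mu(d)\overline{F}_{v}(d)}{\overline{F}_{v}(d)^{2}} = \frac{\overline{F}_{\mu}(d)}{\overline{F}_{v}(d)}\bigl(h_v(d)-h_\mu(d)\bigr),
\]
so claim (ii) holds if and only if $h_\mu(d)\geq h_v(d)$ for all $d$, which is claim (i). So once LR ordering is obtained, both parts follow.

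For the density ratio computation I would use the explicit expression derived in Appendix B,
\[
f_\mu(x) \;=\; \frac{\mu-\lambda}{1-\rho^{n_0}}\, e^{-(\mu-\lambda)x}\,Pois(n_0-1;\lambda x),
\]
and make the key observation that the factor $Pois(n_0-1;\lambda x)$ depends on $\lambda$ and $n_0$ but not on $\mu$. Splitting $e^{-(\mu-\lambda)x}=e^{\lambda x}e^{-\mu x}$ isolates the $\mu$-dependence in a pure exponential, and the Poisson factor and the $e^{\lambda x}$ factor cancel in the ratio, leaving
\[
\frac{f_\mu(x)}{f_v(x)} \;=\; C(\mu,v,\lambda,n_0)\cdot e^{-(\mu-v)x},
\]
where $C$ collects the $\mu$- and $v$-dependent prefactors. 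For $v<\mu$ this is strictly decreasing in $x$, establishing the LR inequality.

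The final step is the classical LR$\Rightarrow$HR passage. Rewriting the decreasing-ratio inequality as $f_\mu(x)\,f_v(d)\leq f_\mu(d)\,f_v(x)$ for all $x\geq d$ and integrating in $x$ over $[d,\infty)$ yields $f_v(d)\,\overline{F}_{\mu}(d)\leq f_\mu(d)\,\overline{F}_{v}(d)$, i.e., $h_\mu(d)\geq h_v(d)$, which is (i); substituting back into the derivative identity gives (ii). The approach parallels the proof of the corresponding Lemma~\ref{lemmaforunimodal} for pure Gamma components, but is cleaner here because the truncated-queue density has a much simpler multiplicative $\mu$-dependence than the individual Gamma terms of the mixture. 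The only step requiring any care is deriving the explicit form of $f_\mu$, which has already been done in Appendix~B; beyond that there is no real obstacle.
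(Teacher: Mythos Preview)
Your argument is correct. You take a genuinely different route from the paper for part (i). The paper writes out the hazard rate explicitly as
\[
h_{\mu}(d)=\frac{Pois(n_0-1;\lambda d)}{\displaystyle\int_{d}^\infty e^{(\mu-\lambda)(d-x)} \frac{Pois(n_0-1;\lambda x)}{1-\rho^{n_0}}\,\mathrm{d}x}
\]
and asserts that monotonicity in $\mu$ is ``immediate'' from this expression; it then derives (ii) from (i) by the same differentiation identity you wrote down. Your approach instead establishes the stronger likelihood-ratio order first, exploiting that in the closed form $f_\mu(x)=C(\mu)\,e^{-(\mu-\lambda)x}\,Pois(n_0-1;\lambda x)$ the entire $\mu$-dependence sits in a constant times $e^{-\mu x}$, so the Poisson factor and the $e^{\lambda x}$ piece cancel in the ratio. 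This is cleaner: the paper's ``immediate'' step still needs one to argue that the integral in the denominator decreases in $\mu$ (both the exponential kernel and $\rho=\lambda/\mu$ vary), whereas your density-ratio computation is a one-line calculation followed by the standard LR$\Rightarrow$HR integration. For part (ii) the two proofs coincide. The only caveat is that the prefactor $C(\mu,v,\lambda,n_0)$ also carries the $1-\rho^{n_0}$ normalization, which depends on $\mu$; you correctly absorb it into $C$ since it does not affect the $x$-dependence.
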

\begin{proof}
\begin{itemize}
\item[$(i.)$] It is immediate that $h_{\mu}(d)$ is increasing in $\mu$ since:
\begin{equation*}
h_{\mu}(d)=\frac{Pois(n_0-1;\mu d)}{\int\limits_{d}^\infty e^{(\mu-\lambda)(d-x)} \frac{Pois(n_0-1;\mu d)}{1-\rho^{n_0}}\mathrm{d}x}.
\end{equation*}

\item[$(ii.)$]
The proof follows by differentiation of $\frac{\overline{F}_{\mu}(d)}{\overline{F}_{v}(d)}$ as in the proof of Lemma \ref{lemmaforunimodal}, $(ii.)$.
\end{itemize}
\end{proof}

\begin{proof}[Proof of Proposition \ref{unimodalc}]

Using Lemma \ref{optsocc}, we consider three cases regarding $n_0$.
\begin{itemize}
\item[$(i.)$]
We first fix $n_0 \in \left\lbrace \underline{n}+1, \ldots ,\overline{n}-1 \right\rbrace$, and consider the problem of maximizing $S_c(d)$, for $d\in (\tilde{d}^{P}_{n_0},\tilde{d}^{P}_{n_0-1}]$ where:
$$S_c(d)=\lambda \sum_{n=0}^{n_0-1} q(n;n_0)(G_n(d)+B_n(d)).$$
In the proof of Proposition \ref{unimodal}, it was shown that:
\begin{equation*}
\left(G_n(d)+B_n(d)\right)^{'}=l\left(  \overline{F}_{n,\mu}(d)-e^{-r(R-p)} e^{rld} \left( \frac{\mu}{v}\right)^{n+1} \overline{F}_{n,v}(d) \right)
\end{equation*}
Hence, 
\begin{align*}
\frac{\partial S_c(d)}{\partial d}
&=\lambda l \left(  \sum_{n=0}^{n_0-1} q(n;n_0)\overline{F}_{n,\mu}(d)- e^{-r(R-p)} e^{rld} \sum_{n=0}^{n_0-1} q(n;n_0) \left( \frac{\mu}{v}\right)^{n+1} \overline{F}_{n,v}(d)\right)\\ 
\end{align*}
We have that $$\sum_{n=0}^{n_0-1} q(n;n_0)\overline{F}_{n,\mu}(d)=\overline{F}_{\mu}(d),$$
and, 
\begin{align*}
\sum_{n=0}^{n_0-1} q(n;n_0) \left( \frac{\mu}{v}\right)^{n+1} \overline{F}_{n,v}(d)
&=\sum_{n=0}^{n_0-1} \frac{(1-\rho) \rho^n}{1-\rho^{n_0+1}} \left(\frac{\mu}{v}\right)^{n+1} \overline{F}_{n,v}(d)\\
&=\left(\frac{\mu-\lambda}{1-\rho^{n_0+1}}\right) \left(\frac{1-\left( \frac{\lambda}{v}\right)^{n_0+1}}{v-\lambda}\right) \sum_{n=0}^{n_0-1} \frac{\left( \frac{\lambda}{v}\right)^{n} \left( 1-\frac{\lambda}{v}\right) }{1-\left( \frac{\lambda}{v}\right)^{n_0+1}} \overline{F}_{n,v}(d)\\
&=\beta \overline{F}_{v}(d),
\end{align*}
with $\beta>0$.

Thus, the first derivative of $S_c(d)$ with respect to $d$ is equal to:
\begin{align}\label{a_c(d)}
\frac{\partial S_c(d)}{\partial d}
&=\lambda l \overline{F}_{v}(d) e^{rld} \left( \frac{\overline{F}_{\mu}(d)}{\overline{F}_{v}(d)} e^{-rld}-\beta e^{-r(R-p)} \right)\nonumber \\
&=\lambda l \overline{F}_{v}(d) e^{rld} a_c(d),
\end{align}
where $a_c(d)$ is defined in \eqref{dtildesocc}. From Lemma \ref{lemmaforunimodalsingle} it follows that $a_c(d)$ is decreasing in $d$.

Therefore, either $a_c(d) \geq 0$ for all $d\in(\tilde{d}^{P}_{n_0},\tilde{d}^{P}_{n_0-1}]$, in which case ${d}^{S_c}_{n}=\tilde{d}^{P}_{n_0-1}$, 
or $a_c(d) \leq 0$ for all $d\in(\tilde{d}^{P}_{n_0},\tilde{d}^{P}_{n_0-1}]$,  in which case there exist  $\epsilon$-optimal lead-time quotes which approach the sup in \eqref{firstsocproc} arbitrary close, since $a_c(d)$ is continuous and decreasing and the lower bound cannot be attained by any $d$,
or the maximizing value ${d}^{S_c}_{n_0}\in (\tilde{d}^{P}_{n_0},\tilde{d}^{P}_{n_0-1}]$ is the unique solution of $a_c(d)=0$.

\item[$(ii.)$] When $n_0=\underline{n}$, we maximize $S_c(d)$ for $d\in (\tilde{d}^{P}_{n_0},\infty]$.

For $d \to \infty$, by applying de L'Hospital's rule, 
\begin{align*}
\lim_{d \to \infty} \frac{\overline{F}_{\mu}(d)}{\overline{F}_{v}(d)}e^{-rld}
&=\lim_{d \to \infty} \frac{f_{\mu}(d)e^{-rld}}{f_{v}(d)}+rl\lim_{d \to \infty} \frac{e^{-rld}\overline{F}_{\mu}(d)}{f_{v}(d)}\\
&=\lim_{d \to \infty} \frac{f_{\mu}(d)e^{-rld}}{f_{v}(d)} \left( 1+\frac{1}{h_{\mu}(d)} \right) 
\end{align*}
and also,
\begin{equation*}
\lim_{d \to \infty} \frac{f_{\mu}(d)e^{-rld}}{f_{v}(d)}= \beta e^{-rcd}=0,
\end{equation*}
and,
\begin{equation*}
\lim_{d \to \infty} \frac{1}{h_{\mu}(d)}>0,
\end{equation*}
since the hazard rate $h_{\mu}(d)$ is increasing in $d$.

It follows that:
\begin{equation*}
\lim_{d \to \infty} a_c(d)=-\beta e^{-r(R-p)}<0.
\end{equation*}

Since $\lim_{d \to \infty} a_c(d)<0$ and $a_c(d)$ is decreasing in $d$, returning to \eqref{a_c(d)}, the maximum point of $G_n(d)+B_n(d)$ depends on the sign of $a_c(\tilde{d}^{P}_{\underline{n}})$. If $a_c(\tilde{d}^{P}_{\underline{n}}) >0$, then  ${d}^{S_c}_{\underline{n}}$ is the unique solution of $a_c(d)=0$. Otherwise, there exist $\epsilon$-optimal lead-time quotes which approach the sup in \eqref{firstsocproc} arbitrary close.

\item[$(iii.)$] When $n_0=\overline{n}$, we maximize $S_c(d)$ for $d\in [0,\tilde{d}^{P}_{n_0-1}]$.

For $d=0$, we have $\overline{F}_{\mu}(0)=\overline{F}_{v}(0)=1$. Thus:
\begin{equation*}
a_c(0)=1- \beta e^{-r(R-p)} ,
\end{equation*} 
where $a_c(0)$ can be either nonpositive or nonnegative.

In this case there is the closed interval $[0, \tilde{d}^{P}_{\overline{n}}]$. The only difference with the approach of case $(i.)$ is that when $a_c(d) \leq 0$ for all $d\in[0, \tilde{d}^{P}_{\overline{n}}]$, the optimal quote can be attained and it is ${d}^{S_c}_{\overline{n}}=0$.
\end{itemize}
\end{proof}

\begin{proof}[Proof of Lemma \ref{noepsilon}]

Let $n_0$ be a balking threshold with $a_c(\tilde{d}^{P}_{n_0})\leq 0$. We know that $Z_{n_0}(d)$ is decreasing in $n_0$ and that $a_c(d)$ is decreasing in $d$. Moreover the sign of $Z^{'}_{n_0}(d)$ is determined by the sign of $a_c(d)$.
Therefore $Z^{'}_{n_0}(d)\leq 0$ and $Z_{n_0}(\tilde{d}^{P}_{n_0})\geq Z_{n_0}(d)$ for any $d\in(\tilde{d}^{P}_{n_0},\tilde{d}^{P}_{n_0-1}]$.

However, $\tilde{d}^{P}_{n_0}$ is not a feasible value when the balking threshold is $n_0$ but it is feasible when the balking threshold is $n_0-1$. Additionally, $Z_{n_0-1}(\tilde{d}^{P}_{n_0})\geq Z_{n_0}(\tilde{d}^{P}_{n_0})$.

From the above, it follows that $Z_{n_0-1}(\tilde{d}^{P}_{n_0})\geq Z_{n_0}(d)$ for any $d\in[\tilde{d}^{P}_{n_0},\tilde{d}^{P}_{n_0-1}]$ thus $n_0$ cannot be an optimal balking threshold.
\end{proof}

\end{document}